\DeclareMathAlphabet{\mathsf}{OT1}{\sfdefault}{m}{n}
\SetMathAlphabet{\mathsf}{bold}{OT1}{\sfdefault}{b}{n}
\DeclareMathAlphabet{\mathfrak}{U}{jkpmia}{m}{it}
\SetMathAlphabet{\mathfrak}{bold}{U}{jkpmia}{bx}{it}
\DeclareMathAlphabet{\mathsf}{OT1}{\sfdefault}{m}{n}
\SetMathAlphabet{\mathsf}{bold}{OT1}{\sfdefault}{b}{n}
\DeclareMathAlphabet{\mathfrak}{U}{jkpmia}{m}{it}
\SetMathAlphabet{\mathfrak}{bold}{U}{jkpmia}{bx}{it}
\numberwithin{equation}{section}
\definecolor{WIMgreen}{RGB}{60 134 132}
\definecolor{UMblue}{RGB}{4 47 86}
\definecolor{myteal}{RGB}{0 123 137}
\definecolor{material_green}{RGB}{27 43 52}
\definecolor{dracula_pink}{RGB}{180 93 149}
\definecolor{dracula_blue}{RGB}{40 42 54}
\definecolor{dracula_turq}{RGB}{92 143 159}
\definecolor{dracula_orange}{RGB}{255 184 108}
\definecolor{material_petrol}{RGB}{2 119 189}
\definecolor{Purple}{RGB}{103 58 183}
\definecolor{cs}{rgb}{0.0, 0.44, 1.0}
\definecolor{oucrimsonred}{rgb}{0.6, 0.0, 0.0}
\theoremstyle{plain}
\newtheorem{theorem}{Theorem}[section]
\newtheorem*{theorem*}{Theorem}
\newtheorem{proposition}[theorem]{Proposition}
\newtheorem{lemma}[theorem]{Lemma}
\newtheorem{corollary}[theorem]{Corollary}
\theoremstyle{definition}
\theoremstyle{assumption}
\theoremstyle{remark}
\newtheorem{remark}[theorem]{Remark}
\newtheorem{example}[theorem]{Example}
\def\E{\mathbb{E}}
\def\N{\mathbb{N}}
\def\P{\mathbf{P}}
\def\N{\mathbb{N}}
\def\R{\mathbb{R}}
\definecolor{darkred}{rgb}{0,0.6,0}
\def\Z{\mathbb{Z}}
\def\Op{\overline{\bm\Psi}}
\newcommand{\cF}{\mathcal{F}}
\newcommand{\ep}{\varepsilon}
\newcommand{\PP}{\mathbb{P}}
\newcommand{\X}{\mathbf{X}}
\renewcommand{\hat}{\widehat}
\newcommand{\e}{\mathrm{e}}
\renewcommand{\tilde}{\widetilde}%
\renewcommand{\d}{\mathop{}\!\mathrm{d}}
\newcommand{\lebesgue}{\bm{\lambda}}
\newcommand{\overbar}[1]{\mkern 1.5mu\overline{\mkern-1.5mu#1\mkern-1.5mu}\mkern 1.5mu}
\newcommand*\diff{\mathop{}\!\mathrm{d}}
\newcommand{\one}{\mathbf{1}}
\newcommand*{\Cdot}{\raisebox{-0.25ex}{\scalebox{1.2}{$\cdot$}}}
\newcommand{\vertiii}[1]{{\left\vert\kern-0.25ex\left\vert\kern-0.25ex\left\vert #1
\right\vert\kern-0.25ex\right\vert\kern-0.25ex\right\vert}}
\let\originalleft\left
\let\originalright\right
\renewcommand{\left}{\mathopen{}\mathclose\bgroup\originalleft}
\renewcommand{\right}{\aftergroup\egroup\originalright}
\newlist{todolist}{itemize}{2}
\setlist[todolist]{label=$\square$}
\newcommand{\specificthanks}[1]{\@fnsymbol{#1}}
\title{\fontsize{16}{19} \selectfont Concentration analysis of multivariate elliptic diffusion processes}
\author{Cathrine Aeckerle-Willems\thanks{University of Mannheim, Department of Economics, L7 3--5, 68161 Mannheim, Germany. \newline Email: \href{mailto:aeckerle@uni-mannheim.de}{aeckerle@uni-mannheim.de}} \and Claudia Strauch\thanks{Aarhus University, Department of Mathematics, Ny Munkegade 118, 8000 Aarhus C, Denmark \newline Email: \href{mailto:strauch@math.au.dk}{strauch@math.au.dk}/\href{mailto:trottner@math.au.dk}{trottner@math.au.dk}} \and Lukas Trottner\footnotemark[2]}
\date{\vspace{-5ex}}
\begin{document}
\maketitle

\begin{abstract}
We prove concentration inequalities and associated PAC bounds  for continuous- and discrete-time additive functionals for possibly unbounded functions of multivariate, nonreversible diffusion processes. Our analysis relies on an approach via the Poisson equation allowing us to consider a very broad class of subexponentially ergodic processes. 
These results add to existing concentration inequalities for additive functionals of diffusion processes which have so far been only available for either bounded functions or for unbounded functions of processes from a significantly smaller class.
We demonstrate the power of these exponential inequalities by two examples of very different areas. Considering a possibly high-dimensional parametric nonlinear drift model under sparsity constraints, we apply the continuous-time concentration results to validate the restricted eigenvalue condition for Lasso estimation, which is fundamental for the derivation of oracle inequalities.
The results for discrete additive functionals are used to investigate the unadjusted Langevin MCMC algorithm for sampling of moderately heavy-tailed densities $\pi$. In particular, we provide PAC bounds for the sample Monte Carlo estimator of integrals $\pi(f)$ for polynomially growing functions $f$ that quantify sufficient  sample and step sizes for approximation within a prescribed margin with high probability.
\end{abstract}

\section{Introduction}

Concentration inequalities for additive functionals belong to the fundamental probabilistic tools in statistics and related areas such as statistical learning and reinforcement learning since they allow exact quantification of the deviation of estimators from a given target. In particular, concentration inequalities for independent data such as Hoeffding, Bernstein and McDiarmid inequalities are of central importance for deriving PAC guarantees in classification and regression contexts (see, e.g., \cite{devroye96,wain19}). 
While such questions have been well understood for decades in classical settings for independent or strongly mixing data---see also the recent investigations of Bernstein and Hoeffding inequalities and related applications in statistical learning for Markov chains with spectral gap in \cite{jiang18,fan21}---the general picture for additive functionals of diffusion processes is less clear. Particularly when it comes to unbounded functionals, whose deviation properties around their ergodic mean are fundamentally important in a multitude of applications, useful results are rather scarce. Important achievements in this direction can be found in \cite{cattiaux05, gao14}, where for a restricted class of reversible diffusion processes exponential inequalities are derived by means of functional inequalities. While these results are mathematically elegant and explicitly quantify the contribution of the asymptotic variance, they come at the price of structural constraints on the diffusion coefficients which are hard to verify and often inappropriate for specific applications. 

The goal of this paper is therefore to derive usable exponential concentration inequalities for unbounded functionals, both for continuous as well as discrete multivariate diffusion data, under comparatively weak assumptions on the coefficients and the speed of ergodicity. With our particular focus on applications, we translate these inequalities into PAC bounds for the approximation task and demonstrate their usefulness in  specific high-dimensional applications to (i) penalized drift estimation under sparsity constraints, where we extend results for the classical Ornstein--Uhlenbeck model in \cite{gaiffas19,ciolek20} to more flexible parametrized models with relaxed ergodicity assumptions, and (ii) performance guarantees for unadjusted Langevin MCMC algorithms for heavy-tailed target sampling, which is a setting that substantially differs from the related pioneering work \cite{dala17,durmus17} for strongly log-concave targets. Here, for a given quantity of interest $\pi$ and a sample based estimator $\hat{\pi}_t$ with $t \in \mathbb{T}$ ---where  $\mathbb{T} = [0,\infty)$ or $\mathbb{T} = \Delta \N_0$ for some sampling distance $\Delta > 0$, depending on whether continuous or discrete data is available---, we say that $\hat{\pi}_t$ satisfies an $(\varepsilon,\delta)$-PAC bound for $t \geq T(\varepsilon,\delta) \in \mathbb{T}$, given $\varepsilon > 0, \delta \in (0,1)$, if  
\[
\forall t \geq T(\varepsilon,\delta),
\quad
\PP\big(\lvert \hat{\pi}_t - \pi \rvert \leq \varepsilon \big) \geq 1- \delta, \]
i.e., given a sample length of at least $T(\varepsilon,\delta)$, $\hat{\pi}_t$ approximates the target $\pi$ within an $\varepsilon$-margin with probability at least $1-\delta$. Such results are statistically much more insightful than  upper bounds on the mean deviation, which do not reveal detailed information on the distribution of the loss.

In our particular context, the objectives are exponential inequalities and associated PAC bounds of sample mean estimators of the quantity $\pi = \mu(f)\coloneqq \int f(x) \, \mu(\diff{x}) $, where $\mu$ is the stationary distribution of a subexponentially ergodic elliptic diffusion $\X$ and $f$ is a polynomially growing function. That is, we provide an in-depth analysis of the deviations around $\pi$ of $\hat{\pi}_t = t^{-1/2} \mathbb{G}_t(f)$, where
\begin{equation}\label{eq:add_cont}
\mathbb{G}_t(f) \coloneqq \frac{1}{\sqrt{t}} \int_0^t f(X_s) \diff{s},
\end{equation}
given continuous data $(X_s)_{0 \leq s \leq t}$, and of $\hat{\pi}_{n\Delta} = (n\Delta)^{-1/2}\mathbb{G}_{n,\Delta}(f)$, where
\begin{equation}\label{eq:add_disc}
\mathbb{G}_{n,\Delta}(f) \coloneqq \frac{1}{\sqrt{n \Delta}} \sum_{k = 1}^n f(X_{k \Delta}) \Delta,
\end{equation}
given discrete data $(X_{k\Delta})_{k=1,\ldots,n}$, as well as their burned-in versions. Since our specific framework is what sets this paper apart from related studies such as \cite{gao14}, we will now introduce both the class of processes we are working with as well as the \textit{Poisson equation} and its solution studied in \cite{pardoux01}, which is at the heart of our theoretical analysis based on \textit{martingale approximation}.

\paragraph{Basic framework}
Consider a $d$-dimensional elliptic diffusion that is given as the weak solution to the SDE
\begin{equation}\label{eq:sde}
\diff{X_t} = b(X_t) \diff{t} + \sigma(X_t) \diff{W_t},
\end{equation}
where $b \colon \R^d \to \R^d$ is a locally Lipschitz drift vector such that $\lVert b(x) \rVert \lesssim 1 + \lVert x \rVert^{q^\prime}$ for some $q^\prime \geq 0$ and $\sigma \in \R^{d\times d}$ is a uniformly continuous, bounded and locally Lipschitz $d \times d$-matrix-valued function such that $a \coloneqq \sigma\sigma^\top$ is uniformly elliptic, i.e.,
\[\langle a(x) \eta\slash \lVert \eta \rVert, \eta\slash \lVert \eta \rVert \rangle \geq \lambda, \quad x \in \R^d, \eta \in \R^d \setminus\{0\},\]
for some constant $\lambda > 0$.
We denote by $(\X,(\PP^x)_{x \in \R^d})$ the Markovian weak solution of \eqref{eq:sde} such that under $\PP^x$ the process  $\X$ solves \eqref{eq:sde} with initial condition $X_0 = x$  and has continuous paths almost surely. 
Note that $\X$ has the Feller property, cf.\ \cite[Corollary 11.1.5]{stroock06}, and is therefore Borel right such that it falls into the general framework for stability of Markov processes.
Without loss of generality, we may assume that there exists a family of shift operators $(\theta_t)_{t \geq 0}$ for $\X$, that is, $X_t \circ \theta_s = X_{t+s}$ for any $s,t \geq 0$. Let $\lambda_-,\lambda_+, \Lambda$ be the tightest constants such that, for any $x \neq 0$,
\[0< \lambda_- \leq \langle a(x) x \slash \lVert x \rVert, x/ \lVert x \rVert \rangle \leq \lambda_+, \quad \mathrm{tr}(a(x))/d \leq \Lambda,\]
where our assumptions guarantee that such constants always exist since we may always choose  $\Lambda = d^{-1} \sup_{x \in \R^d} \mathrm{tr}(a(x)) < \infty$, $\lambda_- = \lambda$ and $\lambda_+ = \sup_{x \in \R^d} \lVert \sigma(x) \rVert^2 < \infty$.

Our subsequent analysis substantially relies on the following growth condition on the drift, 
\begin{enumerate}[label = ($\mathscr{A}(q)$), ref =($\mathscr{A}(q)$), leftmargin = *]
\item if $\lVert x \rVert \geq M_0$, then $\langle b(x), x/\lVert x \rVert \rangle \leq -\mathfrak{r} \lVert x \rVert^{-q},$ \label{cond:drift}
\end{enumerate}
where $q \in [-1,1), M_0 \geq 0, \mathfrak{r} > 0$. 
For $q=0$, this condition equals the standard ergodicity condition in many recent investigations of multivariate diffusion processes exploiting the exponential $\beta$-mixing property. As will be discussed in Section \ref{sec:basics}, the case $q > 0$ corresponds to a subexponential ergodic behaviour of the diffusion.

Our approach to deviation inequalities is driven by the martingale approximation technique, which has been employed for the same purpose in the literature under more restrictive structural assumptions. \cite{aeckerle21} study concentration inequalities in the context of scalar exponentially ergodic diffusions in the regime $q = 0$ with polynomially growing drift, and in \cite{nickl20}, multivariate diffusions with unit diffusion matrix and periodic Lipschitz drift are considered. 
\cite{galt07} essentially treat the scalar dissipative case with $q = -1$. 
All of these papers put a special emphasis on uniformicity of the concentration inequalities with respect to the diffusion coefficients in order to apply them to statistical minimax estimation problems. Moreover, the martingale approximation is employed in \cite{mattingly10} for providing $L^2$ convergence guarantees of Monte Carlo estimators for well-behaved SDEs on the torus based on samples obtained by numerical  approximation schemes.

Central to the martingale approximation technique is the existence of a solution to the Poisson equation 
\begin{equation}\label{eq:poisson}
Lu = f,
\end{equation}
for appropriate functions $f$ where, given $u \in L^{1}_{\mathrm{loc}}(\R^d)$ having weak partial derivatives up to second order belonging to $L^{1}_{\mathrm{loc}}(\R^d)$,
\[Lu(x) = \langle b(x), \nabla u(x) \rangle + \frac{1}{2} \mathrm{tr}\big(a(x)D^2u(x)\big), \quad x \in \R^d,\]
is a second order local operator. 
Note that, on the domain $\mathcal{C}^2_0(\R^d)$, $L$ is the infinitesimal generator of the diffusion process. In the scalar case, \eqref{eq:poisson} has an explicit $\mathcal{C}^2$-solution, which is used in \cite{aeckerle21} to obtain $\sup$-norm moment bounds for empirical processes that are uniform over a class of SDE coefficients. Such results can then be employed for minimax optimal $\sup$-norm adaptive drift estimation as demonstrated in \cite{aeckerle18b}.

For multivariate diffusions, such explicit solutions are not obtainable in general such that one needs to deal with the Poisson equation in a more abstract manner. In \cite{pardoux01}, the authors demonstrate that in our framework,  for any $f\colon \R^d \to \R$ such that $\lvert f(x) \rvert \leq \mathfrak{L}(1+ \lVert x \rVert^\eta)$ for some finite constants $\mathfrak{L} > 0, \eta \geq 0$, there exists a solution $u[f] \in \bigcap_{p > 1} \mathcal{W}^{2,p}_{\mathrm{loc}}(\R^d)$ that is unique in the local Sobolev space $\mathcal{W}^{2,p}_{\mathrm{loc}}(\R^d)$ for any $p > d$.
This solution is given as 
\[u[f](x) = \int_0^\infty \E^x[-f(X_t)] \diff{t}, \quad x \in \R^d,\]
i.e., $u[f](x)$ is expressed as the potential of $-f$ under $\PP^x$. 
Therefore, for such $f$ we denote 
\[
L^{-1}[f](x) \coloneqq \int_0^\infty \E^x[-f(X_t)] \diff{t}, \quad x \in \R^d,
\]
such that $LL^{-1}[f] = f$, $\lebesgue$-a.e., where $\lebesgue$ denotes the Lebesgue measure on $\R^d$.  
The Sobolev regularity of $L^{-1}[f]$ is an essential property for our purposes, since it allows us to apply the It\={o}--Krylov formula for martingale approximation. 
This approach will enable us to conclude the desired deviation inequalities from moment bounds for the martingale approximation.

\paragraph{Outline and main results} In Section \ref{sec:basics}, we  collect some essential known facts on the subexponentially ergodic nature of the diffusion $\X$ implied by the drift condition \ref{cond:drift} and put them into a form suited to our needs. In Section \ref{sec:conti}, we present our first main result, the concentration inequality for the continuous-time scaled additive functional $\mathbb{G}_t(f)$ for polynomially growing $f$ (Theorem \ref{prop:conc}) which is based on our derivation of the martingale approximation $\mathbb{G}_t(f)$ and bounds on the solution to the Poisson equation and its gradient going back to \cite{pardoux01}. We translate these inequalities into stationary and non-stationary PAC bounds in Corollary \ref{coro:pac} and \ref{coro:burnin}, respectively. In Section \ref{sec:discrete}, we then proceed to derive explicit deviation bounds in terms of the sampling frequency $\Delta$ and number of observations $n$ for the discrete scaled additive functional $\mathbb{G}_{n,\Delta}(f)$ by combining Theorem \ref{prop:conc} with an approximation argument, see Theorem \ref{theo:discrete_conc}. As for the continuous data, we use this result to infer PAC bounds for the sample mean estimator and its burn-in version. In Section \ref{sec:lasso},
we apply the continuous-time results to the problem of estimating the coefficients in a possibly high-dimensional drift model of the form $b_{\theta_0} = \sum_{j=1}^N \theta_j\psi_j$, $\theta_0=(\theta_1,...,\theta_N)\in\R^N$,
given a dictionary $(\psi_j)_{1\leq j\leq N}$ of Lipschitz continuous functions $\psi_j\colon\R^d\to \R^d$ under sparsity constraints on the coefficients via a Lasso approach. Our concentration inequality is the key to showing that the central restricted eigenvalue condition is in place, which then in turn yields oracle inequalities in line with those well known in the classical regression context. Finally, Section \ref{sec:mcmc} is devoted to an application of our discrete deviation results, where we study the convergence properties of the unadjusted Langevin algorithm for moderately heavy-tailed target distributions $\pi$, in terms of sufficient sample and step size conditions for sampling within an $\varepsilon$-margin in total variation as well as for ensuring an $(\varepsilon,\delta)$-PAC bound of the sample Monte Carlo estimator of a given target integral $\pi(f)$, again for polynomially bounded functions $f$.

\section{Subexponential ergodicity of the diffusion}\label{sec:basics}
We now give an exact quantification of the stability of $\X$, which underlies the arguments from \cite{pardoux01} and also plays the central technical role in our approach. For details on terms from Markov stability theory such as petite sets or Harris recurrence, we refer to \cite{douc2009}.  

Define $q_+ = q \vee 0$. In the following, choose  $\iota = \iota(q_+) > 0$ small enough such that $\mathfrak{r} > \iota \lambda_+ (1-q_+)/2$. 
In this framework, it was shown in \cite[Proposition 5.1, Theorem 5.4]{douc2009} as a refinement of results in \cite{maly00} that $\X$ possesses a unique invariant distribution $\mu$ and that there exists some constant $C(q_+)$ such that, for $V_q(x) \coloneqq \exp(\iota \lVert x \rVert^{1-q})$, we have 
\begin{equation}\label{eq:subexp_tv}
\big\lVert \PP^x(X_t \in \cdot) - \mu \big\rVert_{\mathrm{TV}} \leq C(q_+) V_{q_+}(x) (1+t)^{\frac{2q_+}{1+q_+}} \mathrm{e}^{-(\iota^\prime t)^{(1-q_+)/(1+q_+)}}, \quad x \in \R^d, t \geq 0,
\end{equation} 
with $\iota^\prime \coloneqq \iota^{(1+q_+)/(1-q_+)}(1+q_+)(\mathfrak{r}- \lambda_+ \iota(1-q_+)/2)$ and $\lVert \nu \rVert_{\mathrm{TV}} \coloneqq \sup_{\lVert f \rVert_\infty \leq 1} \lvert \nu(f) \rvert$ for a signed finite measure $\nu$. 
Thus, for $q \in (0,1)$, $\X$ is subexponentially ergodic, and in case $q \in [-1,0]$ exponentially ergodic, i.e., 
\[\big\lVert \PP^x(X_t \in \cdot) - \mu \big\rVert_{\mathrm{TV}} \lesssim \mathrm{e}^{\iota \lVert x \rVert} \mathrm{e}^{-\iota(\mathfrak{r}-\lambda_+\iota/2) t}, \quad x \in \R^d, t \geq 0.\]
We remark that in \cite{douc2009} only the subexponentially ergodic case $q \in (0,1)$ is explicitly treated, but the arguments extend straightforwardly to the exponentially ergodic regime $q =0$, which then extend to $q \in (0,-1]$. 
Moreover, \cite[Theorem 5.3]{douc2009} establishes that $V_{q+} \in L^1(\mu)$, i.e., 
\begin{equation}\label{eq:subexp_mom2}
\E^\mu[V_{q_+}(X_0)] = \int_{\R^d} \exp(\iota \lVert x \rVert^{1-q_+}) \, \mu(\diff{x}) < \infty.
\end{equation}

It will be central for us to trade off subexponential ergodicity at a slower temporal rate with a less punishing penalty function. 
Let $\tilde{V}_\alpha(x) \coloneqq 1+ \lVert x \rVert^\alpha$ for $\alpha \geq 0$. 
Then, for $\zeta > 0$ and $\gamma > 2(1+ \zeta)$, Proposition 1 in \cite{pardoux01} demonstrates that 
\[
\big\lVert \PP^x(X_t \in \cdot) - \mu \big\rVert_{\mathrm{TV}} \leq C(\gamma,\zeta) \tilde{V}_{\gamma}(x)  (1+t)^{-(1+\zeta)}, \quad x \in \R^d, t \geq 0,
\] 
i.e., polynomial convergence with polynomial penalty function whose degree depends on the degree of the temporal rate that can be freely chosen.
We will  need to make use of polynomial convergence with respect to a stronger norm than the total variation norm considered above. 
To this end, let $H_1, H_2$ be a pair of Young functions on $\R_+$, which are in particular invertible and satisfy 
\begin{equation}\label{eq:young}
xy \leq H_1(x) + H_2(y), \quad x,y \geq 0,
\end{equation}
and let $\mathcal{I}$ be the family of pairs of inverse Young functions augmented by  $(\one,\mathrm{Id})$ and $(\mathrm{Id},\one)$. 
The prototypical example for such pairs are $H_1(x)= x^p/p, H_2(x) = x^q/q$ with $p,q$ conjugate Hölder exponents such that $1/p + 1/q = 1$, in which case \eqref{eq:young} is simply Young's inequality. 
More generally, one may pair any convex function with its Legendre transform to obtain \eqref{eq:young}. 

Following earlier work on discrete and continuous-time Markov models \cite{douc04,douc08,jarner02,tuominen94,fort05}, such inverse Young functions are used in \cite{douc2009} for subgeometrically ergodic Markov models to quantify the trade-off between speed of convergence and strength of the underlying \emph{$f$-norm}, which we introduce next. 
For a measurable function $f \geq 1$ and a signed measure $\nu$ on $(\R^d,\mathcal{B}(\R^d))$, its $f$-norm is defined by 
\[\lVert \nu \rVert_f \coloneqq \sup_{\lvert g \rvert \leq f} \lvert \nu(g) \rvert.\]
In particular, $\lVert \cdot \rVert_{\mathrm{TV}} = \lVert \cdot \rVert_{\bm{1}}$. 
Let us also define the $\delta$-delayed first hitting time of a set $B \in \mathcal{B}(\R^d)$ by $\tau_B(\delta) = \inf\{t \geq \delta: X_t \in B\}$, for $\delta \geq 0$, with $\tau_B = \tau_B(0)$. 
Moreover, we say that $B$ with $\mu(B) > 0$ is accessible, since $\mu$ is a maximal irreducibility measure of $\X$. 
Also note that $\mu$ as the invariant distribution of a Feller process is maximal Harris, such that in particular for any accessible set $B$ we have $\PP^x(\tau_B(\delta) < \infty) = 1$ for any $x \in \R^d$, $\delta \geq 0$. 

With the techniques from \cite{douc2009}, we obtain the following result on polynomial $f$-norm convergence and modulated moments, whose proof is given in Appendix \ref{app:conv}.
This explicit ergodicity result is central both for our derivation of the concentration inequality for continuous data and for its subsequent discrete extension.
It will turn out that appropriate choices for the pairing of Young functions to optimize the trade-off between convergence rate and strength of the $f$-norm will be essential when dealing with polynomially bounded test functions. 
We therefore truly need the full generality of the statement, which underlines the power of the approach in \cite{douc2009} for concrete applications.

\begin{proposition} \label{prop:f_conv}
Let $\gamma \geq 1+ q$ and $q \in (-1,1)$. 
Then, there exist functions $r_{\gamma,q}(t) \sim (1+t)^{(\gamma - (1+q))/(1+q)}$ and $f_{\gamma,q} \sim \tilde{V}_{\gamma - (1+q)}$ such that, for any pair of inverse Young functions $\Psi =(\Psi_1,\Psi_2) \in \mathcal{I}$ and some constant $C(\Psi)$, we have 
\begin{equation}\label{eq:poly_f}
(\Psi_1(r_{\gamma,q}(t)) \vee 1)\lVert \PP^x(X_t \in \cdot) - \mu \rVert_{\bm{1} \vee \Psi_2 \circ f_{\gamma,q}} \leq C(\Psi) \tilde{V}_\gamma(x), \quad x \in \R^d, t\geq 0,
\end{equation}
and, for any accessible set $B \in \mathcal{B}(\R^d)$ and any $\delta > 0$, there exists a constant $c(\Psi) > 0$ such that 
\begin{equation}\label{eq:modmoment}
\E^x\Big[\int_0^{\tau_B(\delta)} \Psi_1(r_{\gamma,q}(t)) \Psi_2(f_{\gamma,q}(X_t)) \diff{t} \Big] \leq c(\Psi)\tilde{V}_\gamma(x), \quad x \in \R^d.
\end{equation}
Moreover, if $q=-1$ and $\gamma > 0$, then, for any $\alpha \in (0,\mathfrak{r}\gamma)$, there exist a function $r_{\alpha}(t) \sim \exp(-\alpha t)$ and $f_\gamma \sim \tilde{V}_\gamma$ such that \eqref{prop:f_conv} and \eqref{eq:poly_f} are true with $f_{\gamma,q}$ and $r_{\gamma,q}$ replaced by $f_\gamma$ and $r_\alpha$, respectively.
\end{proposition}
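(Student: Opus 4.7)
The plan is to derive Proposition \ref{prop:f_conv} from the general subgeometric ergodicity framework of \cite{douc2009} by verifying a suitable concave Foster--Lyapunov drift condition. I take $V(x) \coloneqq (1+\lVert x \rVert^2)^{\gamma/2}$, a $\mathcal{C}^2$-smooth proxy for $\tilde{V}_\gamma$, and compute $LV$ directly. Routine differentiation gives $\nabla V(x) = \gamma(1+\lVert x\rVert^2)^{\gamma/2-1}x$ and an explicit expression for $D^2 V$; inserting the bound from \ref{cond:drift}, the drift contribution $\langle b(x),\nabla V(x)\rangle$ is bounded for $\lVert x \rVert \geq M_0$ by a negative multiple of $(1+\lVert x \rVert^2)^{\gamma/2-1}\lVert x\rVert^{1-q}$, while the diffusion contribution $\tfrac{1}{2}\mathrm{tr}(a(x)D^2V(x))$ is $O((1+\lVert x \rVert^2)^{\gamma/2-1})$ by boundedness of $a$. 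For $\gamma \geq 1+q$ the drift term dominates outside a sufficiently large ball, yielding
\[
LV(x) \leq -c\, V(x)^{1-(1+q)/\gamma} + b\, \mathbf{1}_C(x)
\]
for some compact $C$ and positive constants $b,c$. This is precisely a concave $(V,\phi,C)$-drift condition in the language of \cite{douc2009} with $\phi(v) = c\, v^{1-(1+q)/\gamma}$.

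Next I would read the rate and penalty functions off of $\phi$. The primitive $H_\phi(v) \coloneqq \int_1^v \mathrm{d}u/\phi(u)$ satisfies $H_\phi(v) \asymp v^{(1+q)/\gamma}$, so the inverse rate reads $r_{\gamma,q}(t) \coloneqq \phi(H_\phi^{-1}(t)) \asymp t^{(\gamma-(1+q))/(1+q)}$, and the associated penalty is $f_{\gamma,q}(x) \asymp \phi(V(x)) \asymp \tilde V_{\gamma-(1+q)}(x)$. By ellipticity and the strong Feller property, every compact set is petite (indeed small), so $C$ may be chosen petite. The modulated moment bound \eqref{eq:modmoment} then follows directly from Proposition 2.5 of \cite{douc2009} applied to the drift pair $(V,\phi)$; the Young-pair refinement, in which a slower rate $\Psi_1\circ r_{\gamma,q}$ is traded against a stronger penalty $\Psi_2\circ f_{\gamma,q}$, is obtained by applying \eqref{eq:young} to the product $r_{\gamma,q}(t)f_{\gamma,q}(X_t)$ together with the elementary endpoint cases $(\mathbf{1},\mathrm{Id})$ and $(\mathrm{Id},\mathbf{1})$. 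The polynomial $f$-norm convergence \eqref{eq:poly_f} then follows from combining these modulated hitting time moments with the coupling/regeneration argument of Theorem 3.4 in \cite{douc2009}, using accessibility of $C$ and the $\mu$-integrability of $V$ that any concave drift inequality automatically delivers.

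For $q = -1$ only a minor adaptation is required. Condition \ref{cond:drift} now reads as linear dissipativity $\langle b(x),x/\lVert x \rVert\rangle \leq -\mathfrak{r}\lVert x \rVert$ for large $\lVert x \rVert$, and the same Lyapunov calculation yields the geometric drift inequality $LV(x) \leq -\alpha V(x) + b\,\mathbf{1}_C(x)$ for any $\alpha \in (0,\mathfrak{r}\gamma)$, with $C$ chosen sufficiently large. Standard Meyn--Tweedie theory then gives $V$-uniform geometric ergodicity at rate $\mathrm{e}^{-\alpha t}$ and the corresponding exponentially weighted modulated moments, which translate into \eqref{eq:poly_f} and \eqref{eq:modmoment} with $r_\alpha(t) \asymp \mathrm{e}^{-\alpha t}$ and $f_\gamma \asymp \tilde V_\gamma$.

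The main obstacle I anticipate is not the Lyapunov computation, which is essentially routine, but the bookkeeping needed to match the abstract rate/penalty calculus of \cite{douc2009} to the precise exponents stated here and to pass from the discrete skeleton chains in which several of their results are formulated to the continuous-time statement \eqref{eq:modmoment} with a $\delta$-delayed hitting time. The Young-pair extension itself is the only piece that is not a direct invocation of the cited theorems and will require a short extra argument mirroring the structure of the proof of Theorem 3.2 in \cite{douc2009}.
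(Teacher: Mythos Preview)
Your proposal is correct and follows essentially the same route as the paper's proof: choose a Lyapunov function $V\sim\lVert x\rVert^\gamma$ (the paper takes $V=\lVert x\rVert^\gamma$ outside a ball, you take the smooth proxy $(1+\lVert x\rVert^2)^{\gamma/2}$, which is immaterial), verify the concave drift inequality $LV\leq -c\,V^{1-(1+q)/\gamma}+b\mathbf 1_C$ from \ref{cond:drift} and boundedness of $a$, then invoke the subgeometric machinery of \cite{douc2009} to read off $r_{\gamma,q}\sim(1+t)^{(\gamma-(1+q))/(1+q)}$ and $f_{\gamma,q}\sim\tilde V_{\gamma-(1+q)}$. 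One clarification: the Young-pair trade-off you flag as requiring ``a short extra argument'' is in fact built directly into the statement of Theorem~3.2 of \cite{douc2009}, so once the drift condition $\mathbf D(C,V,\phi,b)$ is verified both \eqref{eq:poly_f} and \eqref{eq:modmoment} follow by direct citation (the paper uses Theorem~3.2 and Proposition~4.6 there), and no separate mirroring of their proof is needed.
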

Let us note that, for any $\eta \geq 0$, 
\[
\sup_{t \geq 0} \E^x\big[\lVert \tilde{X}_t \rVert^\eta\big] \leq c(\eta) \big(1+ \lVert x \rVert^\eta\big), \quad x \in \R^d,
\] 
where $\tilde{X}_t = X_{\tau^{-1}(t)}$ for the time change $\tau(t) \coloneqq \int_0^t \lVert \sigma^\top(X_s) X_s\slash \lVert X_s \rVert \rVert^2 \diff{s}$, cf.\ \cite[Proposition 1]{pardoux01}. Setting $\Psi_1 = \bm{1}$ and $\Psi_2 = \mathrm{Id}$ in \eqref{eq:poly_f}, it follows for the process on its unchanged time scale  that, for any $\eta > 0$,
\begin{equation}\label{eq:poly_mom2}
\sup_{t \geq 0} \E^x\big[\lVert X_t \rVert^\eta \big] \leq \mathfrak{C}(\eta) (1+ \lVert x \rVert^{\eta + 1 +q}), \quad x \in \R^d.
\end{equation}

\section{Concentration of additive diffusion functionals}\label{sec:conc}
Recall the definition of the scaled additive functionals $\mathbb{G}_t(f)$ and $\mathbb{G}_{n,\Delta}(f)$ from \eqref{eq:add_cont} and \eqref{eq:add_disc}, respectively. Motivated by the existence of a regular solution to the Poisson equation for polynomially bounded functions, we study deviations of $\mathbb{G}_t(f)$ and its discrete version $\mathbb{G}_{n,\Delta}(f)$ for functions $f$ belonging to the function class $\mathcal{F}(\eta, \mathfrak{L})$ given by
\[\mathcal{F}(\eta, \mathfrak{L}) \coloneqq \big\{\tilde{f} - \mu(\tilde{f}): \tilde{f} \in \mathcal{G}(\eta, \mathfrak{L})\big\},
\]
for 
\[\mathcal{G}(\eta, \mathfrak{L}) \coloneqq \big\{f \colon \R^d \to \R: \lvert f(x) \rvert \leq \mathfrak{L}(1+\lVert x \rVert^\eta), \, x \in \R^d\},\]
for some finite constant $\mathfrak{L} > 0, \eta \geq 0$.

There is a vast amount of literature on concentration inequalities for path integrals of general Markov processes. The most powerful results are generally established under the assumption of functional inequalities such as Poincaré or log-Sobolev. However, the elliptic diffusions considered in this paper generally do not satisfy such rather strong functional inequalities. In this regard, \cite{cattiaux05} establish concentration inequalities for bounded functionals under a so-called weak Poincaré inequality, which is demonstrated to be equivalent to an $\alpha$-mixing assumption on the process, cf.\ \cite[Proposition 3.4]{cattiaux05}. 
Recall that a stationary Markov process $(Y_t)_{t \geq 0}$ with natural filtration $(\mathcal{F}_t)_{t \geq 0}$ and initial distribution $\nu$ is said to be $\alpha$-mixing if the mixing coefficient $\alpha_\nu(t) \coloneqq \sup_{s \geq 0} \sup_{A \in \mathcal{F}_s, B \in \mathcal{F}_{s+t}} \lvert \PP^\nu(A \cap B) - \PP^\nu(A)\PP^\nu(B) \rvert$ tends to zero as $t \to \infty$. It follows from \eqref{eq:subexp_tv} and \eqref{eq:subexp_mom2} that the stationary $\beta$-mixing coefficient $\beta(t) \coloneqq \int_{\R^d} \lVert \PP^x(X_t \in \cdot)- \mu\rVert_{\mathrm{TV}} \, \mu(\diff{x})$ of our diffusion process satisfies 
\[\beta(t) \leq c \exp\big(-\iota^{\prime\prime} t^{(1-q_+)/(1+q_+)}\big),\] 
for any $\iota^{\prime \prime} \in (0,\iota^\prime)$ and some constant $c$ depending on $\iota^{\prime\prime}$, i.e., the stationary diffusion $\X$ is subexponentially $\beta$-mixing. Consequently, using the well-known fact that $\alpha_\mu(t) \leq \beta(t)$,  \cite[Proposition 3.9]{cattiaux05} yields the following result for \textit{bounded} $f$.
\begin{theorem}{\cite[Proposition 3.9]{cattiaux05}} \label{theo:cattiaux}
For $\iota^{\prime\prime} \in (0,\iota^\prime)$, define  
\begin{equation}\label{def:c}
c(q,\iota^{\prime\prime}) \coloneqq ((1+q_+)/(1-q_+))^{1/(1-q_+)}((1-q_+)\iota^{\prime \prime}/(1+q_+))^{(1+q_+)/(2(1-q_+))}/2.
\end{equation}
For any such $\iota^{\prime\prime}$, there exists a constant $\mathfrak{c} > 0$ such that, for all $f \in \mathcal{F}(0,\mathfrak{L})$ and $(u,t) \in \R^2_+$ such that
\begin{equation}\label{cond:u}
\mathfrak{c} (1+q_+)(1-q_+)^{-(1-q_+)/2} \leq u < \big(c(q,\iota^{\prime\prime})\lfloor t \rfloor/ \sqrt{t}\big)^{1-q_+},
\end{equation} 
it holds
\[ \PP^\mu\Big(\lvert \mathbb{G}_t(f) \rvert > 2\mathfrak{L}\big(c(q,\iota^{\prime\prime})^{-1} u^{\frac{1}{1-q_+}} + t^{-1/2}\big)\Big) \leq 2\mathrm{e}^{-u}.\]
\end{theorem}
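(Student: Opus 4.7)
The plan is to derive the statement as a direct specialisation of the general deviation inequality in \cite[Proposition 3.9]{cattiaux05} for bounded additive functionals of stationary $\alpha$-mixing Markov processes, once the subexponential $\alpha$-mixing rate of our diffusion is made explicit with matching constants.

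First I would convert the pointwise subexponential total variation estimate \eqref{eq:subexp_tv} into a bound on the stationary $\beta$-mixing coefficient $\beta(t) = \int_{\R^d} \lVert \PP^x(X_t \in \cdot) - \mu \rVert_{\TV}\,\mu(\diff{x})$. Integrating \eqref{eq:subexp_tv} against $\mu$ and using the finiteness of $\mu(V_{q_+})$ from \eqref{eq:subexp_mom2}, the polynomial prefactor $(1+t)^{2q_+/(1+q_+)}$ can be absorbed into the exponential at the cost of an arbitrarily small loss in the rate: for every $\iota'' \in (0,\iota')$ there exists a constant $c = c(\iota'')$ with $\beta(t) \leq c \exp(-\iota'' t^{(1-q_+)/(1+q_+)})$. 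Since the stationary $\alpha$-mixing coefficient satisfies $\alpha_\mu(t) \leq \beta(t)$, the same subexponential decay governs $\alpha_\mu$.

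Second, I would plug this mixing rate into \cite[Proposition 3.9]{cattiaux05}. That result bounds $\PP^\mu(\lvert \mathbb{G}_t(f) \rvert > x)$ for bounded mean-zero $f$ in terms of the Legendre-type inverse of the subexponential rate function together with an additive correction of order $t^{-1/2}$ stemming from a martingale decomposition. Specialising to the rate $\exp(-\iota'' t^{(1-q_+)/(1+q_+)})$ produces precisely the inversion $u \mapsto c(q,\iota'')^{-1} u^{1/(1-q_+)}$ with the constant in \eqref{def:c}, and the admissibility window \eqref{cond:u} encodes the abstract requirement in \cite[Proposition 3.9]{cattiaux05} that the deviation level be large enough for the mixing correction to dominate, but small enough that the tail bound remains informative relative to the floor $c(q,\iota'')\lfloor t \rfloor/\sqrt t$.

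The only substantive obstacle is the bookkeeping of constants: the probabilistic heart of the argument is carried out in \cite{cattiaux05}, so the task reduces to tracking the exponents $(1-q_+)/(1+q_+)$ through the Young-type inversion, and verifying that the arbitrarily small slack $\iota'' < \iota'$ suffices to absorb the polynomial prefactor in \eqref{eq:subexp_tv}. Once these constants are matched, the claimed tail bound $2\mathrm{e}^{-u}$ together with the validity range \eqref{cond:u} follows directly from the cited proposition.
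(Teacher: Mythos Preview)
Your proposal is correct and matches the paper's treatment exactly: the paper does not prove this result independently but derives it by integrating \eqref{eq:subexp_tv} against $\mu$ using \eqref{eq:subexp_mom2} to obtain subexponential $\beta$-mixing, invoking $\alpha_\mu(t)\leq\beta(t)$, and then appealing directly to \cite[Proposition 3.9]{cattiaux05}. One small inaccuracy worth noting: the additive $t^{-1/2}$ correction and the restriction \eqref{cond:u} do not arise from a martingale decomposition or a Legendre-type inversion as you suggest, but rather from the blocking argument underlying the cited proposition, which splits the integral into discrete blocks and applies Rio's moment bounds for $\alpha$-mixing sequences.
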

In the above result, the restriction on $u$ in \eqref{cond:u} is explained by the proof technique that makes use of general moment bounds for discrete $\alpha$-mixing sequences from \cite{rio17}. This approach requires the integral to be divided into a finite number of blocks with a carefully chosen length that determines the degree of mixing of the block sequence. 

In the following, we add to this result  by allowing polynomially growing integrands $f$. It is well-known that dropping the boundedness assumption poses  major challenges in deriving concentration inequalities, some of which have been elegantly solved in \cite{gao14} for symmetric Markov processes satisfying  (strong) functional inequalities. It should also be noted that in \cite[Section 3.2]{cattiaux05} some arguments are provided how conclusions for unbounded integrands $f$ can be drawn from Theorem \ref{theo:cattiaux} by employing a truncation technique. However, there appears to be a gap in the proposed strategy, which prevents it from being applicable for $u > 0$ such that $u/\sqrt{t}$ is small. Since our ultimate focus is on applications of our concentration inequalities to the inference of PAC bounds for $t^{-1/2}\mathbb{G}_t(f)$, we do not further pursue an approach relying on discrete mixing results, but employ a different technique that is embedded more naturally in the continuous framework.

\subsection{Continuous observations}\label{sec:conti}
Our main result for continuous observations is the following exponential concentration bound for polynomially bounded functions.

\begin{theorem} \label{prop:conc}
There exists a constant $\mathfrak{W}$, depending on $q, \eta$ and the diffusion coefficients $b$ and $\sigma$, such that, for any $p \geq 2$, $t > 0$ and $f  \in \mathcal{F}(\eta, \mathfrak{L})$, we have
\begin{equation}\label{eq:mom}
\lVert \mathbb{G}_t(f) \rVert_{L^p(\PP^\mu)} \leq \mathfrak{L}\mathfrak{W} p^{\frac{1}{2} + \frac{\eta + q^\prime + q +1}{1-q_+}}.
\end{equation}
As a consequence, for any $t > 0$,
\begin{equation}\label{eq:conc}
\PP^\mu\Big(\lvert \mathbb{G}_t(f) \rvert > \mathrm{e}\mathfrak{L}\mathfrak{W} u^{\frac{1}{2} + \frac{\eta + q^\prime + q +1}{1-q_+}} \Big) \leq \mathrm{e}^{-u}, \quad u \geq 2.
\end{equation}
\end{theorem}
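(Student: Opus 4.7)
The plan is to use martingale approximation based on the Poisson equation, combined with Burkholder--Davis--Gundy (BDG) moment bounds for the resulting Itô martingale and a Chernoff step for the final concentration claim. First I would set $u \coloneqq L^{-1}[f]$, which by the Pardoux--Veretennikov theory belongs to $\bigcap_{p>1}\mathcal{W}^{2,p}_{\mathrm{loc}}(\R^d)$ and satisfies $Lu = f$ almost everywhere. The Itô--Krylov formula, applicable thanks to the local Sobolev regularity of $u$ and the uniform ellipticity of $a$, yields the martingale decomposition
\[
\sqrt{t}\,\mathbb{G}_t(f) \;=\; u(X_t) - u(X_0) - M_t, \qquad M_t \coloneqq \int_0^t \langle \nabla u(X_s), \sigma(X_s) \diff W_s\rangle,
\]
with angle bracket $\langle M\rangle_t = \int_0^t \lVert \sigma^\top(X_s)\nabla u(X_s)\rVert^2\diff s$.

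Taking $L^p(\PP^\mu)$ norms, Minkowski together with $\mu$-stationarity gives
\[
\lVert \mathbb{G}_t(f)\rVert_{L^p(\PP^\mu)} \leq \frac{2\lVert u\rVert_{L^p(\mu)}}{\sqrt{t}} + \frac{\lVert M_t\rVert_{L^p(\PP^\mu)}}{\sqrt{t}}.
\]
The optimal BDG constant $C_p \lesssim \sqrt{p}$ for $p \geq 2$, combined with a Jensen-plus-stationarity estimate $\lVert \langle M\rangle_t^{1/2}\rVert_{L^p(\PP^\mu)} \leq \sqrt{t}\,\lVert\sigma^\top\nabla u\rVert_{L^p(\mu)}$, cancels the $\sqrt{t}$ in the martingale term. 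The small-$t$ blow-up of the boundary term $2\lVert u\rVert_{L^p(\mu)}/\sqrt{t}$ I would dispatch with the trivial stationarity bound $\lVert \mathbb{G}_t(f)\rVert_{L^p(\PP^\mu)} \leq \sqrt{t}\,\lVert f\rVert_{L^p(\mu)}$ (relevant for $t \leq 1$), yielding uniformly in $t>0$ an estimate of the form
\[
\lVert\mathbb{G}_t(f)\rVert_{L^p(\PP^\mu)} \lesssim \lVert f\rVert_{L^p(\mu)} + \lVert u\rVert_{L^p(\mu)} + \sqrt{p}\,\lVert\sigma^\top\nabla u\rVert_{L^p(\mu)}.
\]

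The substantive work is converting these $L^p(\mu)$ norms into the claimed $p$-dependence, which rests on two inputs. First, pointwise polynomial growth estimates on $u$ and $\nabla u$ extracted from the Pardoux--Veretennikov PDE/Lyapunov analysis applied to the probabilistic representation $u(x) = \int_0^\infty \E^x[-f(X_t)]\diff t$, with growth exponents depending on $\eta$, $q$ and the drift-growth exponent $q'$; crucially the polynomial $f$-ergodicity of Proposition~\ref{prop:f_conv} (via an appropriate pairing of inverse Young functions balancing the temporal decay $r_{\gamma,q}$ against the strength of the $f$-norm) is what renders the integral defining $u$ convergent and controls the polynomial growth. Second, moment bounds under $\mu$: by \eqref{eq:subexp_mom2}, $\mu$ has sub-Weibull tails of shape $1-q_+$, so Stirling on the Gamma function delivers
\[
\Big(\int_{\R^d} \lVert x\rVert^{mp}\,\mu(\diff x)\Big)^{1/p} \lesssim p^{m/(1-q_+)}
\]
for every fixed $m \geq 0$, with $m$-dependent constants absorbed into $\mathfrak{W}$. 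Assembling these inputs, the BDG/gradient term dominates and produces the claimed exponent $\frac{1}{2} + (\eta + q' + q + 1)/(1-q_+)$.

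Finally, \eqref{eq:conc} follows from \eqref{eq:mom} by a Chernoff/Markov step: Markov's inequality gives $\PP^\mu(\lvert\mathbb{G}_t(f)\rvert > s) \leq (\mathfrak{L}\mathfrak{W}\,p^{\alpha}/s)^p$ with $\alpha = \frac{1}{2} + (\eta + q'+q+1)/(1-q_+)$, and optimizing by setting $s = \mathrm{e}\,\mathfrak{L}\mathfrak{W}\,u^{\alpha}$ with $p = u$ yields probability $\mathrm{e}^{-u}$; the BDG constraint $p \geq 2$ translates into $u \geq 2$. The main obstacle is the first input above: pinning down the exact polynomial growth of $u$ and especially of $\nabla u$ from the Pardoux--Veretennikov analysis with the correct dependence on $\eta$, $q$ and in particular the drift-growth exponent $q'$, which is where the delicate PDE/Lyapunov work sits. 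Once those growth bounds are in hand, the remainder of the argument is a fairly mechanical chain of Itô--Krylov, BDG, $\mu$-stationarity, and sub-Weibull moment computations.
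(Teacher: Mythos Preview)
Your proposal is correct and follows essentially the same route as the paper, whose proof combines Lemma~\ref{lem:mart_approx} (It\={o}--Krylov decomposition plus BDG with constant $\sim\sqrt{p}$ and stationarity), Lemma~\ref{lem:bound_poisson} (the Pardoux--Veretennikov growth bounds on $L^{-1}[f]$ and $\nabla L^{-1}[f]$, exactly the ``delicate PDE/Lyapunov work'' you flag as the main obstacle), and Lemma~\ref{lem:l_p} (sub-Weibull moments of $\mu$), followed by Markov's inequality with $p=u$. Your explicit treatment of the small-$t$ regime via the trivial stationarity bound $\lVert\mathbb{G}_t(f)\rVert_{L^p}\leq\sqrt{t}\,\lVert f\rVert_{L^p(\mu)}$ is in fact a useful addition, since the paper's combination of lemmas leaves a residual $2\lVert L^{-1}[f]\rVert_{L^p(\mu)}/\sqrt{t}$ on the remainder term that is not uniformly bounded as $t\downarrow 0$.
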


The proof will be given by combining a sequence of technical lemmas that we develop in the following. An interpretation of the result will be stated later in Remark \ref{rem:conc} since this requires making explicit reference to the proof. The first result that we need are bounds on the $L^p$-norms of the invariant measure $\mu$ which are implied by its subexponential tails.

\begin{lemma}\label{lem:l_p}
For all $p \geq 1$, it holds that 
\[\E^\mu\big[\lVert X_0 \rVert^p\big]^{1/p} \leq c_{q_+} p^{1/(1-q_+)},\]
where 
\[c_{q_+} = \e^{\e/2 + (1-q_+)/12} ((1-q_+)\iota\e)^{-1/(1-q_+)}\sqrt{\frac{2\pi}{1-q_+}} \E^\mu[V_{q+}(X_0)].\]
\end{lemma}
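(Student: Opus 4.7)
The plan is to exploit the subexponential integrability \eqref{eq:subexp_mom2} together with a layer-cake representation of $p$-th moments. Setting $M \coloneqq \E^\mu[V_{q_+}(X_0)] < \infty$, Markov's inequality applied to the function $V_{q_+}$ immediately yields the subexponential tail estimate
\[
\PP^\mu\big(\lVert X_0 \rVert \geq r\big)\;\leq\; M\,\exp\!\big(-\iota r^{1-q_+}\big),\qquad r\geq 0.
\]
This is the only probabilistic input; everything that follows is a deterministic integral computation.

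Next, I would write
\[
\E^\mu\big[\lVert X_0\rVert^p\big] \;=\; p\int_0^\infty r^{p-1}\,\PP^\mu\big(\lVert X_0\rVert\geq r\big)\,\diff r \;\leq\; pM\int_0^\infty r^{p-1}\exp\!\big(-\iota r^{1-q_+}\big)\diff r,
\]
and reduce the remaining integral to a Gamma function via the substitution $u=\iota r^{1-q_+}$. A direct computation gives $r^{p-1}\diff r = (1-q_+)^{-1}\iota^{-p/(1-q_+)} u^{p/(1-q_+)-1}\diff u$, so that
\[
\E^\mu\big[\lVert X_0\rVert^p\big]\;\leq\; \frac{pM}{1-q_+}\,\iota^{-p/(1-q_+)}\,\Gamma\!\Big(\tfrac{p}{1-q_+}\Big).
\]
This already exhibits the correct scaling: by Stirling, $\Gamma(p/(1-q_+))^{1/p}$ grows like $p^{1/(1-q_+)}$, which is the advertised rate.

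The remainder of the proof is careful bookkeeping to identify the explicit constant $c_{q_+}$. I would apply the non-asymptotic Stirling bound $\Gamma(x)\leq\sqrt{2\pi/x}\,(x/\e)^{x}\,\e^{1/(12x)}$ with $x=p/(1-q_+)$, which contributes exactly the factors $\sqrt{2\pi/(1-q_+)}$, $((1-q_+)\e)^{-1/(1-q_+)}$, and $\e^{(1-q_+)/12}$ after extracting a $p$-th root. Taking $1/p$-th powers leaves residual terms of the form $M^{1/p}$ and $(\sqrt{p\,\cdot\,\mathrm{const}})^{1/p}$, which I would control by the elementary inequality $a^{1/p}\leq \exp(a/\e)$ valid for all $a>0,p\geq 1$ (following from $\ln a\leq a/\e$); the worst-case contribution of these residuals accounts for the factor $\e^{\e/2}$ as well as the linear appearance of $M$ rather than $M^{1/p}$ in the final constant.

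The main (essentially only) obstacle is this final bookkeeping step: one needs the Stirling constant and the $1/p$-power absorptions to combine exactly into the stated $c_{q_+}$. There is no conceptual difficulty, but the bound $a^{1/p}\leq \e^{a/\e}$ has to be applied at the right places so that $M^{1/p}$ is cleanly replaced by a $p$-independent quantity linear in $M$, and so that the extraneous $p^{1/(2p)}$-type factors merge into the single exponential prefactor $\e^{\e/2+(1-q_+)/12}$ appearing in $c_{q_+}$.
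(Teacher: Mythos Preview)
Your approach is essentially the same as the paper's: derive a sub-Weibull tail bound via Markov's inequality and then convert it into an $L^p$-norm bound. The paper does not carry out the layer-cake/Gamma/Stirling computation but simply cites \cite[Proposition 7.13]{foucart13} for this conversion, which packages exactly the calculation you sketch and produces the stated constant $c_{q_+}$.

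One small correction to your bookkeeping: the inequality $a^{1/p}\le \e^{a/\e}$ does \emph{not} turn $M^{1/p}$ into something linear in $M$ (it gives an exponential bound). The linear appearance of $M=\E^\mu[V_{q_+}(X_0)]$ comes instead from the trivial observation that $V_{q_+}\ge 1$ forces $M\ge 1$, hence $M^{1/p}\le M$ for all $p\ge 1$; the bound $a^{1/p}\le \e^{a/\e}$ (or more sharply $x^{1/x}\le \e^{1/\e}$) is only needed to absorb the residual $p$-dependent factors such as $p^{1/(2p)}$ into the $\e^{\e/2}$ prefactor.
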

\begin{proof}
Let $a_{q_+} = ((1-q_+)\iota\e)^{-1/(1-q_+)}$. 
Using Markov's inequality, it follows that, for any $u \geq 1$, 
\[\PP^\mu\big(\lVert X_0 \rVert \geq \e^{1/(1-q_+)}a_{q_+} u \big) \leq \E^\mu[V_{q_+}(X_0)] \exp\big(-u^{1-q_+}/(1-q_+)\big),\]
with $\E^\mu[V_{q_+}(X_0)] < \infty$ due to \eqref{eq:subexp_mom2}. The assertion now follows from \cite[Proposition 7.13]{foucart13}.
\end{proof}

Next, we state the martingale approximation of the additive functional $\mathbb{G}_t(f)$ for polynomially bounded $f$ with the help of the It\={o}--Krylov formula, which extends the usual It\={o} formula for diffusion processes from functions with $\mathcal{C}^2$-regularity to functions with slightly weaker Sobolev regularity. 
This is necessary in light of the regularity of the solution to the Poisson equation that we described in Section \ref{sec:basics}.

\begin{lemma}\label{lem:mart_approx}
For any $f \in \mathcal{F}(\eta, \mathfrak{L})$, we have a decomposition 
\[\mathbb{G}_t(f) = \frac{1}{\sqrt{t}} \mathbb{M}_t(f) + \frac{1}{\sqrt{t}} \mathbb{R}_t(f),\]
where $(\mathbb{M}_t(f))_{t \geq 0}$ is a continuous square-integrable $\PP^\mu$-martingale and both $f \mapsto \mathbb{M}_{\Cdot}(f)$ and $f \mapsto \mathbb{R}_{\Cdot}(f)$ are linear. 
Moreover, there exists a global constant $c \geq 1$
such that, for any $p\geq 1, t \geq 0$
\begin{equation}\label{eq:martingale_bound}
\E^\mu\big[\lvert \mathbb M_t(f) \rvert^p\big]^{1/p} \leq c \lambda_+^{1/2} p^{1/2} \sqrt{t}  \big\lVert \lVert \nabla L^{-1}[f] \lVert \big\rVert_{L^{p \vee 2}(\mu)},
\end{equation}
and 
\begin{equation}\label{eq:remainder_bound}
\E^\mu\big[\lvert \mathbb{R}_t(f) \rvert^p \big]^{1/p} \leq 2 \lVert L^{-1}[f] \rVert_{L^p(\mu)}.
\end{equation}
\end{lemma}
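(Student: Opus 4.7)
The plan is to obtain the decomposition by applying the Itô--Krylov formula to $u \coloneqq L^{-1}[f]$, which by the discussion of \cite{pardoux01} recalled in Section \ref{sec:basics} lies in $\bigcap_{p > 1} \mathcal{W}^{2,p}_{\mathrm{loc}}(\R^d)$ and satisfies $Lu = f$ Lebesgue-a.e. Since $p > d$ is admissible and $\X$ is a non-degenerate elliptic diffusion with uniformly elliptic $a$, the Itô--Krylov formula yields
\[
u(X_t) = u(X_0) + \int_0^t f(X_s) \diff{s} + M_t, \qquad M_t \coloneqq \int_0^t \langle \nabla u(X_s), \sigma(X_s) \diff{W_s}\rangle,
\]
as a $\PP^\mu$-a.s.\ identity, where $M$ is an a priori continuous local martingale. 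Rearranging and dividing by $\sqrt{t}$ gives the decomposition with
\[
\mathbb{M}_t(f) \coloneqq -M_t, \qquad \mathbb{R}_t(f) \coloneqq u(X_t) - u(X_0),
\]
and both maps are linear in $f$ because $f \mapsto L^{-1}[f]$ is linear and so is the stochastic integral.

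The remainder estimate \eqref{eq:remainder_bound} is immediate from the triangle inequality in $L^p(\PP^\mu)$ together with the stationarity of $\X$: under $\PP^\mu$, both $X_0$ and $X_t$ are distributed according to $\mu$, so $\E^\mu[|\mathbb{R}_t(f)|^p]^{1/p} \leq 2 \|u\|_{L^p(\mu)}$.

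For \eqref{eq:martingale_bound}, I first treat the case $p \geq 2$. The Burkholder--Davis--Gundy inequality with its sharp growth in $p$ gives $\E^\mu[|M_t|^p]^{1/p} \leq c\sqrt{p}\,\E^\mu[\langle M\rangle_t^{p/2}]^{1/p}$ for some universal constant $c$. Using the uniform bound $\|\sigma(x)\|^2 \leq \lambda_+$, the quadratic variation satisfies $\langle M\rangle_t \leq \lambda_+ \int_0^t \|\nabla u(X_s)\|^2 \diff{s}$. Minkowski's inequality in integral form combined with stationarity of $\X$ under $\PP^\mu$ then yields
\[
\Big\|\int_0^t \|\nabla u(X_s)\|^2 \diff{s}\Big\|_{L^{p/2}(\PP^\mu)}^{1/2} \leq \Big(\int_0^t \|\nabla u(X_s)\|_{L^p(\PP^\mu)}^2 \diff{s}\Big)^{1/2} = \sqrt{t}\, \|\|\nabla u\|\|_{L^p(\mu)},
\]
and combining gives the stated bound. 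The case $1 \leq p < 2$ follows from monotonicity of $L^p$-norms applied to the $p = 2$ bound, which is why the norm appears at level $p \vee 2$ on the right-hand side. Along the way, the $p=2$ estimate shows $\E^\mu[\langle M\rangle_t] < \infty$ as long as $\|\nabla u\| \in L^2(\mu)$, which upgrades the local martingale $M$ to a genuine continuous square-integrable $\PP^\mu$-martingale (in the degenerate case $\|\nabla u\|_{L^2(\mu)} = \infty$, the inequality \eqref{eq:martingale_bound} is trivial and the decomposition is still valid as a local-martingale statement).

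The main technical point is ensuring the Itô--Krylov formula is actually applicable to the particular solution $u = L^{-1}[f]$; this is precisely what the Sobolev regularity from \cite{pardoux01} buys us, and it is the reason for working with this explicit choice of solution rather than an abstract one. The remaining steps are standard once the decomposition is available: the BDG inequality with its $\sqrt{p}$ dependence is textbook, and stationarity reduces the random integrals to deterministic $L^p(\mu)$ norms.
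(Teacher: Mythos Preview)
Your proposal is correct and follows essentially the same route as the paper: apply the It\={o}--Krylov formula to $u=L^{-1}[f]$ to obtain the decomposition, bound the remainder by stationarity and the triangle inequality, and bound the martingale by BDG with the $\sqrt{p}$ constant followed by stationarity. The only cosmetic differences are that the paper handles $p\in[1,2)$ by applying BDG directly at level $p$ and then Jensen's inequality to $\langle M\rangle_t^{p/2}$ (rather than your monotonicity of $L^p$-norms from the $p=2$ case), and the paper is more explicit about why $\int_0^t LL^{-1}[f](X_s)\diff{s}=\int_0^t f(X_s)\diff{s}$ almost surely (since $LL^{-1}[f]=f$ only $\lebesgue$-a.e., one uses absolute continuity of the transition densities) and about why $\lVert\nabla u\rVert\in L^2(\mu)$ (invoking the polynomial bound established later in Lemma~\ref{lem:bound_poisson}).
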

\begin{proof}
For any $p\ge1$, $L^{-1}[f] \in \mathcal{W}^{2,p}_{\mathrm{loc}}(\R^d)$, the coefficients $b,\sigma$ are locally bounded, $\sigma \sigma^\top$ is uniformly positive definite and \eqref{eq:subexp_mom2} guarantees $\E^\mu[\int_0^t \lVert b(X_s) \rVert^2 \diff{s}] < \infty$ since $\lVert b \rVert \lesssim \tilde{V}_{q^\prime}$.
Thus, we can apply the It\={o}--Krylov formula (cf.\ \cite[Theorem 2.10.1]{krylov09}) 
to obtain the $\PP^{\mu}$-a.s.\ identities 
\begin{align*}
L^{-1}[f](X_t) &= L^{-1}[f](X_0) + \int_0^t \big(\nabla L^{-1}[f](X_s)\big)^\top \sigma(X_s)\diff{W_s} + \int_0^t LL^{-1}[f](X_s) \diff{s}\\
&= L^{-1}[f](X_0) + \int_0^t \big(\nabla L^{-1}[f](X_s)\big)^\top \sigma(X_s) \diff{W_s} + \int_0^t f(X_s) \diff{s}, \quad t \geq 0.
\end{align*}
Here, the second equality follows from $LL^{-1}[f] = f$, $\lebesgue$-a.e., and $\PP^x(X_t \in \cdot) \ll \lebesgue$ for any $(t,x) \in (0,\infty) \times \R^d$, which implies
\[\E^x\Big[\Big\vert \int_0^t LL^{-1}[f](X_s) \diff{s} - \int_0^t f(X_s) \diff{s} \Big\vert \Big] \leq \int_0^t \int_{\R^d} \lvert LL^{-1}[f](y) - f(y) \rvert p_s(x,y) \diff{y} \diff{s} = 0, \quad x\in \R^d,\]
such that $\int_0^t LL^{-1}[f](X_s) \diff{t} = \int_0^t f(X_s) \diff{s}$ $\PP^x$-a.s.\ for any $x \in \R^d$ and hence also $\PP^\mu$-a.s.\ follows. 
Consequently,
\[\mathbb G_t(f) = \frac{1}{\sqrt{t}}\int_0^t f(X_s) \diff{s} = \frac{1}{\sqrt{t}} \mathbb M_t(f) + \frac{1}{\sqrt{t}} \mathbb R_t(f), \quad t \geq 0,\]
where 
\[\mathbb M_t(f) = -\int_0^t \big(\nabla L^{-1}[f](X_s)\big)^\top \sigma(X_s)\diff{W_s}, \quad t \geq 0,\]
and 
\[\mathbb{R}_t(f) = L^{-1}[f](X_t) - L^{-1}[f](X_0), \quad t \geq 0.\]
The square-integrable martingale property of $\mathbb{M}_\cdot(f)$ follows from
\[\lVert \nabla L^{-1}[f](x) \rVert \lesssim \tilde{V}_{\eta + q^\prime + q + 1}(x), \quad x \in \R^d,\]
which is demonstrated in the proof of Lemma \ref{lem:bound_poisson}, such that \eqref{eq:subexp_mom2} implies that 
\[\int_0^t \E^\mu\big[\big\lVert \big(\nabla L^{-1}[f](X_s)\big)^\top \sigma(X_s)\big\rVert^2\big] \diff{s} \lesssim t \lambda_+ \mu\big(\tilde{V}_{2(\eta+q^\prime + q + 1)}\big) < \infty, \quad t \geq 0.\]
The bound \eqref{eq:remainder_bound} is now an immediate consequence of stationarity under $\PP^\mu$ and Minkowski's inequality.
Moreover, using the Burkholder--Davis--Gundy inequality for continuous martingales started in $0$ in the form given in \cite[Proposition 4.2]{barlow82}, it follows that, for some $c \geq 1$ and any $p \geq 1$,
\begin{equation}\label{eq:mart1}
\begin{split}
\E^\mu\big[\lvert \mathbb M_t(f) \rvert^p\big] &\leq c^pp^{p/2} \E^\mu \big[\langle \mathbb{M}_{\Cdot}(f) \rangle_t^{p/2}\big]\\
&= c^pp^{p/2} \E^\mu\Big[\Big(\int_0^t \lVert \sigma^\top(X_s) \nabla L^{-1}[f](X_s) \rVert^2 \diff{s}\Big)^{p/2} \Big]\\
&\leq c^pp^{p/2} \lambda_+^{p/2} \E^\mu\Big[\Big(\int_0^t \lVert  \nabla L^{-1}[f](X_s) \rVert^2 \diff{s}\Big)^{p/2} \Big].
\end{split}
\end{equation}
Consequently, for $p \geq 2$, first using Jensen's inequality and then Fubini together with stationarity gives 
\begin{align*}
\E^\mu\big[\lvert \mathbb M_t(f) \rvert^p\big] &\leq c^p p^{p/2}\lambda_+^{p/2} t^{p/2} \E^\mu\Big[\frac{1}{t}\int_0^t \lVert  \nabla L^{-1}[f](X_s) \rVert^p \diff{s} \Big]\\
&= c^p p^{p/2}\lambda_+^{p/2} t^{p/2} \big\lVert \nabla L^{-1}[f]\big\rVert^p_{L^p(\mu)} ,
\end{align*}
and hence 
\[\E^\mu\big[\lvert \mathbb M_t(f) \rvert^p\big]^{1/p} \leq c \lambda_+^{1/2} p^{1/2} \sqrt{t} \big\lVert \nabla L^{-1}[f]\big\rVert_{L^p(\mu)}, \quad t\geq 0.\]
In case $p \in [1,2)$, we get from \eqref{eq:mart1} with another application of Jensen's inequality and Fubini
\begin{align*} 
\E^\mu\big[\lvert \mathbb M_t(f) \rvert^p\big] &\leq c^pp^{p/2} \lambda_+^{p/2}\E^\mu \Big[\frac{1}{t}\int_0^t \big\lVert \nabla L^{-1}[f](X_s) \big\rVert^{2} \diff{s} \Big]^{p/2}\\
&= c^pp^{p/2} \lambda_+^{p/2} t^{p/2} \big\lVert \lVert\nabla L^{-1}[f] \lVert \big\rVert^{p}_{L^{2}(\mu)}.
\end{align*}
Thus, for any $p \geq 1$, \eqref{eq:martingale_bound} follows.
\end{proof}

In view of \eqref{eq:martingale_bound} and \eqref{eq:remainder_bound}, to exploit the martingale approximation we need concrete bounds on the solution of the Poisson equation $L^{-1}[f]$ and its gradient $\nabla L^{-1}[f]$. 
This is the content of the next lemma, which can essentially be obtained from combining Lemma \ref{lem:l_p} with the Sobolev estimates  from \cite{pardoux01} and \cite{bogachev18}. 
For later reference and some clarification concerning the role of the drift growth, we give a full proof that simplifies some arguments from \cite{pardoux01} thanks to Proposition \ref{prop:f_conv}. 

\begin{lemma} \label{lem:bound_poisson}
Let $p \geq 1$. 
There exist constants $\mathfrak{U}(q,\eta), \mathfrak{V}(q,q^\prime,\eta)$ (independent of $p$) such that, for any $f = \tilde{f} - \mu(\tilde{f}) \in \mathcal{F}(\eta,\mathfrak{L})$,
\begin{equation}\label{eq:bound_poisson}
\big \lVert L^{-1}[f] \big\rVert_{L^{p}(\mu)} \leq \mathfrak{L}\mathfrak{U}(q,\eta) p^{\frac{\eta + q +1}{1-q_+}} 
\end{equation}
and
\begin{equation}\label{eq:bound_gradient}
\big \lVert \lVert \nabla L^{-1}[f] \lVert \big\rVert_{L^{p}(\mu)} \leq \mathfrak{L}\mathfrak{V}(q,q^\prime,\eta)   p^{\frac{\eta + q^\prime + q +1}{1-q_+}}.
\end{equation}
\end{lemma}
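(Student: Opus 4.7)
The plan is to first establish pointwise estimates
\[|L^{-1}[f](x)| \leq \mathfrak{L} C_1 \tilde V_{\eta+1+q}(x), \qquad \|\nabla L^{-1}[f](x)\| \leq \mathfrak{L} C_2 \tilde V_{\eta+q^\prime+q+1}(x),\qquad x\in\R^d,\]
and then translate them into the claimed $L^p(\mu)$ bounds via Lemma \ref{lem:l_p}. The first pointwise bound will rest on the modulated moment statement \eqref{eq:modmoment} of Proposition \ref{prop:f_conv}, and the second on interior Sobolev regularity for the operator $L$.

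For $L^{-1}[f]$, I would fix a bounded accessible set $B$ with $\mu(B)>0$ and some $\delta>0$, and exploit the potential representation $L^{-1}[f](x)=-\E^x\bigl[\int_0^\infty f(X_t)\diff t\bigr]$ together with the strong Markov property at $\tau_B$ to decompose
\[L^{-1}[f](x) = -\E^x\Big[\int_0^{\tau_B} f(X_t)\diff t\Big] + \E^x\bigl[L^{-1}[f](X_{\tau_B})\bigr].\]
Applying \eqref{eq:modmoment} with the augmented inverse Young pair $(\Psi_1,\Psi_2)=(\bm{1},\mathrm{Id})\in\cI$ and $\gamma=\eta+1+q$, so that $f_{\gamma,q}\sim 1+\|\cdot\|^\eta$, and using $\tau_B\leq\tau_B(\delta)$, one obtains $\E^x\bigl[\int_0^{\tau_B}|f(X_t)|\diff t\bigr]\lesssim\mathfrak L\,\tilde V_{\eta+1+q}(x)$. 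Continuity of paths places $X_{\tau_B}\in\bar B$, so the second summand is dominated by $\sup_{y\in\bar B}|L^{-1}[f](y)|$, a finite constant proportional to $\mathfrak L$ since, for bounded initial conditions, Proposition \ref{prop:f_conv} applied with a non-trivial Young pair (for instance $\Psi_1(r)=r^{1-\beta},\Psi_2(s)=s^\beta$ for suitably small $\beta$) yields integrable temporal decay of $|\E^y[f(X_t)]|$.

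For the gradient, the Sobolev regularity $L^{-1}[f]\in\bigcap_{p>1}\mathcal W^{2,p}_{\mathrm{loc}}(\R^d)$ enables interior $W^{2,p}$-estimates for the uniformly elliptic operator $L$ with locally bounded drift, in the spirit of \cite{pardoux01,bogachev18}: for $p>d$ and on unit-scale balls,
\[\|L^{-1}[f]\|_{\mathcal W^{2,p}(B_1(x))}\lesssim \bigl(1+\|b\|_{L^\infty(B_2(x))}\bigr)\bigl(\|L^{-1}[f]\|_{L^p(B_2(x))}+\|f\|_{L^p(B_2(x))}\bigr),\]
with implicit constants depending only on $d$ and the ellipticity parameters $\lambda,\lambda_+,\Lambda$. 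Exploiting $\|b(y)\|\lesssim 1+\|y\|^{q^\prime}$, the pointwise bound on $L^{-1}[f]$ from the first step, the polynomial growth of $f$, and the fact that $\tilde V_\alpha(y)\asymp\tilde V_\alpha(x)$ for $y\in B_2(x)$, then yields $\|L^{-1}[f]\|_{\mathcal W^{2,p}(B_1(x))}\lesssim \mathfrak L\,\tilde V_{\eta+q^\prime+q+1}(x)$. The Sobolev embedding $\mathcal W^{2,p}\hookrightarrow\mathcal C^1$ for $p>d$ then produces the desired pointwise gradient bound.

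To conclude, I would transfer both pointwise controls to $L^p(\mu)$-norm bounds by noting $(1+\|x\|^\alpha)^p\leq 2^{p-1}(1+\|x\|^{\alpha p})$ and applying Lemma \ref{lem:l_p} with $\alpha p$ in place of $p$, yielding $\|\tilde V_\alpha(X_0)\|_{L^p(\mu)}\lesssim p^{\alpha/(1-q_+)}$; substituting $\alpha=\eta+1+q$ and $\alpha=\eta+q^\prime+q+1$ then gives \eqref{eq:bound_poisson} and \eqref{eq:bound_gradient}. The main technical hurdle I anticipate lies in the gradient step: one has to track the dependence of the $W^{2,p}$-estimate's constant on $\|b\|_{L^\infty(B_2(x))}$ carefully, so that the local drift norm enters with exponent $1$ and the stated polynomial exponent $(\eta+q^\prime+q+1)/(1-q_+)$ is not inflated.
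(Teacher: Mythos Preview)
Your proposal is correct and follows essentially the same route as the paper: pointwise control $|L^{-1}[f](x)|\lesssim\mathfrak L\,\tilde V_{\eta+1+q}(x)$ via the strong Markov decomposition combined with the modulated moment bound \eqref{eq:modmoment} at the pair $(\Psi_1,\Psi_2)=(\bm1,\mathrm{Id})$, then the interior Sobolev estimate of \cite{bogachev18} (the paper's \eqref{eq:sobolev0}) to pass to the gradient, and finally Lemma \ref{lem:l_p} to obtain the $L^p(\mu)$-exponents. The only differences are organizational: the paper establishes local boundedness of $L^{-1}[f]$ through a Cauchy--Schwarz computation on the transition density rather than your Young-pair argument, and it bounds $\|\nabla L^{-1}[f]\|_{L^p(\mu)}$ by first applying H\"older in \eqref{eq:sobolev} and then estimating the resulting local $L^r$-norms, whereas you go through a pointwise gradient bound before invoking Lemma \ref{lem:l_p}; both orderings produce the same exponent $(\eta+q^\prime+q+1)/(1-q_+)$. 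Note that the paper flags the boundary case $\eta=0$, $q=-1$ (where $\gamma=\eta+1+q=0$ falls outside the hypothesis of Proposition \ref{prop:f_conv}) and defers to the It\^o argument of \cite[Theorem 2]{pardoux01}; your proposal shares this caveat.
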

\begin{proof}
By a slight adjustment to the proof of Proposition 4.1 in \cite{bogachev18}\footnote{as the authors point out, the gradient bounds derived in \cite[Theorem 1]{pardoux01} are only valid in case of bounded drift}, we obtain for any $r > d$
\begin{equation}\label{eq:sobolev0}
\big\lVert \nabla L^{-1}[f](x) \big\rVert \lesssim \big(1+ \sup_{y \in B(x,1)} \lvert b(y) \rvert \big)\lVert L^{-1}[f] \rVert_{L^{r}(B(x,1))} + \lVert f \rVert_{L^{r}(B(x,1))}.
\end{equation}
Therefore, using Hölder's inequality and the growth condition on the drift $b$,
\begin{align} \label{eq:sobolev}
\big \lVert \lVert \nabla L^{-1}[f] \lVert \big\rVert_{L^{p}(\mu)} \lesssim \lVert (1+ \lVert \cdot \rVert^{q^\prime}) \rVert_{L^{2p}(\mu)}\big\lVert \lVert L^{-1}[f] \rVert_{L^r(B(\cdot,1))} \big \rVert_{L^{2p}(\mu)} + \big\lVert \lVert f \rVert_{L^r(B(\cdot,1))} \big \rVert_{L^p(\mu)}.
\end{align}
If $q > -1$, let $\gamma > 2(1+ q)$. Then we can calculate as in the proof of \cite[Theorem 1]{pardoux01} to obtain
\begin{align*}
\lvert L^{-1}[f](x) \rvert &\leq  \int_0^\infty \int_{\R^d} \lvert \tilde{f}(y)\rvert \lvert p_t(x,y) - \rho(y) \rvert \diff{y} \diff{t}\\
&\leq  \int_0^\infty \Big(\int_{\R^d} \lvert p_t(x,y) - \rho(y) \rvert \diff{y}\Big)^{1/2} \Big(\int_{\R^d} \lvert \tilde{f}(y) \rvert^{2} (p_t(x,y) + \rho(y)) \diff{y}\Big)^{1/2} \diff{t} \Big)\\
&=  \int_0^\infty \big(\lVert P_t(x,\cdot) - \mu \rVert_{\mathrm{TV}}\big)^{1/2} \Big(\int_{\R^d} \lvert \tilde{f}(y) \rvert^{2} (p_t(x,y) + \rho(y)) \diff{y}\Big)^{1/{2}} \diff{t} \Big)\\
&\leq  \mathfrak{L}C^\prime(\gamma,q) (\tilde{V}_\gamma(x))^{1/2}  \int_0^\infty (1+t)^{-\frac{\gamma - (1+q)}{1+q}} \Big(\int_{\R^d} (1+ \lVert y \rVert^\eta)^{2} (p_t(x,y) + \rho(y)) \diff{y}) \Big)^{1/2} \diff{t}\\
&\leq  \mathfrak{L}C(\eta,\gamma,q)(\tilde{V}_\gamma(x))^{1/2} (1 + \lVert x \rVert^{\eta + (1+q)/2}) \int_1^\infty t^{-(\gamma - (1+q))/(1+q)} \diff{t}\\
&=  \mathfrak{L}C^\prime(\eta,\gamma,q)  \tilde{V}_{\eta + (1+q +\gamma)/2}(x),
\end{align*}
where we used Cauchy--Schwarz for the second inequality and \eqref{eq:poly_f} for the third inequality. The last inequality arises from \eqref{eq:subexp_mom2} and \eqref{eq:poly_mom2}. 
A similar calculation, using exponential ergodicity with arbitrary polynomial penalty $\tilde{V}_\gamma$ in case $q = -1$ with any $\gamma > 0$, shows that the above estimate remains valid for $q = -1$.
By the strong Markov property, we have for any $R > 0$ and $\tau_R \coloneqq \tau_{\overbar{B(0,R)}}$,
\[L^{-1}[f](x) = \E^x\big[L^{-1}[f](X_{\tau_R})\big] + \E^x\Big[\int_0^{\tau_R} f(X_t) \diff{t}\Big].\]
By the above, $u$ is locally bounded and thus the first term is bounded for any $R > 0$. 
For the second term, we can employ the It\={o} formula argument from \cite[Theorem 2]{pardoux01} to improve this bound to $\lvert L^{-1}[f]\rvert \lesssim \mathfrak{L}\tilde{V}_{\eta + 1 +q}$. 
Alternatively, apart from the case $\eta = 0$, $q =-1$, we may simply note that, by setting $\Psi_1 = \bm{1}$ and $\Psi_2 = \mathrm{Id}$, \eqref{eq:modmoment} yields that for any $\delta > 0$
\[\Big\vert \E^x\Big[\int_0^{\tau_R} f(X_t) \diff{t}\Big] \Big\vert \leq  \E^x\Big[\int_0^{\tau_R(\delta)} \lvert f(X_t) \rvert \diff{t}\Big] \leq \mathfrak{L}C(\eta,q,R,\delta) \tilde{V}_{\eta + 1 +q}(x), \quad x \in \R^d.\]
Here we used that $\lvert f \rvert \lesssim \mathfrak{L}\tilde{V}_{\eta}$ and that $B(0,R)$ is accessible, which follows from $\lebesgue$-irreducibility of $\X$ implied by uniform positive definiteness of $\sigma \sigma^\top$, cf.\ \cite[Theorem 2.3]{stramer97}. 
Thus, 
\[\lvert L^{-1}[f](x) \rvert \leq C\mathfrak{L}\tilde{V}_{\eta + 1+q}(x), \quad x \in \R^d,\]
follows for some constant $C$ depending on $\eta$ and $q$.
Consequently, for any $p\geq 1$, Lemma \ref{lem:l_p} yields
\[\lVert L^{-1}[f] \rVert_{L^p(\mu)} \leq  C\mathfrak{L} \lVert \tilde{V}_{\eta + 1 +q} \rVert_{L^p(\mu)} \leq C\mathfrak{L} c_q^{\eta + q + 1} p^{\frac{\eta + q +1}{1-q_+}}.\]
Using that 
\[\lvert \tilde{V}_{\eta + q +1 }(x+y) \rvert \leq 2^{\eta + q} (\tilde{V}_{\eta + q+1}(x) + \tilde{V}_{\eta + q+1}(y)) \leq 2^{\eta + q} (2+ \tilde{V}_{\eta + q+1}(x)), \quad x \in \R^d, y \in B(0,1),\]
it also follows that 
\[\big\lVert \lVert L^{-1}[f] \rVert_{L^r(B(\cdot,1))} \big \rVert_{L^{2p}(\mu)} \leq C\mathfrak{L} 2^{\eta+q+1} \lVert \tilde{V}_{\eta + q +1} \rVert_{L^{2p}(\mu)} \leq   \mathfrak{L}C 2^{\eta+q+1} c_q^{\eta + q +1} (2p)^{\frac{\eta + q+1}{1-q_+}}.\]
Moreover, $\lvert \tilde{f}(x+y) \rvert \leq \mathfrak{L}2^\eta(2+ \tilde{V}(x))$ for $y \in B(0,1)$ implies
\begin{align*}
\lVert \lVert f \rVert_{L^{r} B(\cdot,1)} \rVert_{L^{p}(\mu)}  &\leq \mathfrak{L} 2^{\eta} \big(2 + (1 + \lebesgue(B(0,1)))\lVert \tilde{V}_{\eta} \rVert_{L^p(\mu)} \big)\\
&\leq  \mathfrak{L}2^\eta (1 \vee \lebesgue(B(0,1))) (1+ c_{q+}^\eta p^{\eta/(1-q_+)}),
\end{align*}
such that \eqref{eq:sobolev} allows us to conclude that 
\[\big \lVert \lVert \nabla L^{-1}[f] \lVert \big\rVert_{L^{p}(\mu)} \leq \mathfrak{L}\mathfrak{V}(q,q^\prime,\eta) p^{\frac{\eta+q^\prime + q +1}{1-q_+}}.\]
\end{proof}

We are now ready to infer Theorem \ref{prop:conc} from the previous results.
\begin{proof}[Proof of Theorem \ref{prop:conc}]
The moment bounds \eqref{eq:mom} are an immediate consequence of the combined statements of Lemma \ref{lem:mart_approx} and Lemma \ref{lem:bound_poisson}. By Markov's inequality, \eqref{eq:mom} implies \eqref{eq:conc}.
\end{proof}
\begin{remark}\label{rem:conc}
It would be desirable that the concentration rate provided by Theorem \ref{prop:conc} matches the rate in Theorem \ref{theo:cattiaux} for the bounded case $\eta = 0$ and the rates for polynomially growing integrands for \textit{scalar}, exponentially ergodic diffusions with at most linear drift (i.e., $d=1, q=0, q^\prime = 1$) from \cite[Proposition 7]{aeckerle21}. 
The reason for the gap in the rate can be traced down to the Sobolev estimates \eqref{eq:sobolev0}, where the gradient $\nabla L^{-1}[f]$ is bounded in terms of $L^{-1}[f]$. In contrast, the strategy in \cite{aeckerle21}, see also \cite{galt07}, works in the other direction. That is, by exploiting the explicit solution of the Poisson equation in $d=1$, tight pointwise bounds on the gradient $\nabla L^{-1}[f]$ are established first, which are then used to bound the remainder term $L^{-1}[f](X_t) - L^{-1}[f](X_0) = \int_{X_0}^{X_t} \nabla{L}^{-1}[f](x) \diff{x}$ in the martingale approximation. Such a strategy is not feasible in the multivariate setting since $L^{-1}[f]$ is not explicitly known. Improving our concentration result Theorem \ref{prop:conc} would therefore require tighter estimates on the solution of the Poisson equation and its gradient than those that can be achieved with the ideas from \cite{pardoux01}. This is a challenging and interesting question for future research.
\end{remark}

\paragraph{Stationary and non-stationary PAC bounds}
As an immediate consequence of Theorem \ref{theo:cattiaux} and Theorem \ref{prop:conc}, we can derive the following quantitative version of the ergodic theorem and a stationary PAC bound for (sub-) geometric diffusions. Let us define the rate function
\[\varsigma(\eta,q,q^\prime) \coloneqq \begin{cases} 1-q_+, &\text{ if } \eta = 0,\\ \frac{1}{2} + \frac{\eta + q^\prime + q +1}{1-q_+}, &\text{ if } \eta > 0, \end{cases}\] 
and the sample length function
\[\Psi(\varepsilon,\delta) \coloneqq \begin{cases} \Big(\frac{c(q,\iota^{\prime\prime})^{-1}(\log(2/\delta))^{1/(1-q_+)}+1}{1 \wedge \varepsilon/(2\mathfrak{L})} \Big)^2, &\text{ if } \eta = 0,\\\bigg(\frac{\mathrm{e}\mathfrak{L}\mathfrak{W} (\log (1/\delta))^{\frac{1}{2}+\frac{\eta + q^\prime + q +1}{1-q_+}}}{\varepsilon}\bigg)^2, &\text{ if }\eta > 0,\end{cases}\]
with $c(q,\iota^{\prime\prime})$ defined in \eqref{def:c} and $\mathfrak W$ denoting the constant established in Theorem \ref{prop:conc}.

\begin{corollary}\label{coro:pac}
Let $f \in \mathcal{G}(\eta, \mathfrak{L})$, $\varepsilon > 0$ and $\delta \in (0,1)$ such that $\delta < 2\exp(-\mathfrak{c} (1+q_+)(1-q_+)^{-(1-q_+)/2})$ if $\eta = 0$ and $\delta < \mathrm{e}^{-2}$ if $\eta > 0$. 
Then, for $t \geq \Psi(\varepsilon,\delta)$, it holds that
\begin{equation}\label{eq:pac}
\PP^\mu\Big(\Big\vert \frac{1}{t}\int_0^t f(X_s) \diff{s} - \mu(f)  \Big\vert \leq \varepsilon \Big) \geq 1 - \delta.
\end{equation}
Moreover, for any increasing sequence $(t_n)_{n \in \N} \subset \R_+$ with $\inf_{n \in \N} (t_n - t_{n-1}) > 0$, it holds for any $\delta_0 > 0$ 
\begin{equation}\label{eq:almostsure1}
\lim_{n \to \infty} \sqrt{t_n}(\log t_n)^{-(\varsigma(\eta,q,q^\prime) + \delta_0)} \Big\lvert \frac{1}{t_n}\int_0^{t_n} f(X_s) \diff{s} - \mu(f)\Big\rvert = 0, \quad \PP^\mu\text{-a.s.}
\end{equation}
If $f$ is bounded, we even have, for any $\tilde{q} \in (q_+,1)$,
\[
\lim_{t \to \infty} \sqrt{t}(\log t)^{-\frac{1}{1-\tilde{q}}} \Big\lvert \frac{1}{t}\int_0^{t} f(X_s) \diff{s} - \mu(f)\Big\rvert = 0, \quad \PP^\mu\text{-a.s.}
\]
\end{corollary}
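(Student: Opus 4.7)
The plan is to deduce each of the three claims by directly substituting an explicit value of the free parameter $u$ into the corresponding concentration inequality (Theorem \ref{theo:cattiaux} in the bounded case $\eta=0$, Theorem \ref{prop:conc} in the case $\eta>0$), and, for the almost sure statements, to convert these tail bounds into a Borel--Cantelli argument along a suitable subsequence. The PAC bound \eqref{eq:pac} is the easiest of the three: for $\eta>0$ I set $u=\log(1/\delta)$, where the hypothesis $\delta<\mathrm e^{-2}$ guarantees the admissibility $u\ge 2$ required in Theorem \ref{prop:conc}, and then use $\mathbb G_t(f-\mu(f)) = \sqrt t\,(t^{-1}\int_0^t f(X_s)\diff s - \mu(f))$ to rephrase the tail bound as a deviation bound for the ergodic average. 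Requiring this deviation to be at most $\varepsilon$ gives $\sqrt t \geq \mathrm e\mathfrak L\mathfrak W(\log(1/\delta))^{\varsigma(\eta,q,q')}/\varepsilon$, which is exactly the definition of $\Psi(\varepsilon,\delta)$. For $\eta=0$ I instead set $u=\log(2/\delta)$ in Theorem \ref{theo:cattiaux}; the hypothesis on $\delta$ ensures the lower bound in \eqref{cond:u}, while the upper bound in \eqref{cond:u} is automatic once $t\ge\Psi(\varepsilon,\delta)$. The resulting deviation of the average is $2\mathfrak L t^{-1/2}(c(q,\iota'')^{-1}(\log(2/\delta))^{1/(1-q_+)}+t^{-1/2})$, and using $t^{-1/2}\le (1\wedge\varepsilon/(2\mathfrak L))/(c(q,\iota'')^{-1}(\log(2/\delta))^{1/(1-q_+)}+1)$ (which follows from $t\ge \Psi(\varepsilon,\delta)$, noting $t\ge 1$) this deviation is at most $\varepsilon$.

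For the almost sure statement \eqref{eq:almostsure1} along $(t_n)$, fix $\epsilon>0$ and consider the events $A_n=\{|\mathbb G_{t_n}(f-\mu(f))|>\epsilon(\log t_n)^{\varsigma(\eta,q,q')+\delta_0}\}$. In the case $\eta>0$, invoking Theorem \ref{prop:conc} with $u=u_n\coloneqq (\epsilon/(\mathrm e\mathfrak L\mathfrak W))^{1/\varsigma(\eta,q,q')}(\log t_n)^{1+\delta_0/\varsigma(\eta,q,q')}$ (which exceeds $2$ for $n$ large) gives
\[
\PP^\mu(A_n)\le \exp\bigl(-C_\epsilon(\log t_n)^{1+\delta_0/\varsigma(\eta,q,q')}\bigr).
\]
Since $\inf_n(t_n-t_{n-1})>0$ forces $t_n\gtrsim n$, the right-hand side decays faster than any polynomial in $n$, so $\sum_n\PP^\mu(A_n)<\infty$. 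Borel--Cantelli then yields $\limsup_n|\mathbb G_{t_n}(f-\mu(f))|/(\log t_n)^{\varsigma(\eta,q,q')+\delta_0}\le\epsilon$ almost surely, and letting $\epsilon\downarrow 0$ along a countable sequence gives the claim after recalling $|\mathbb G_{t_n}(f-\mu(f))|=\sqrt{t_n}|t_n^{-1}\int_0^{t_n}f(X_s)\diff s-\mu(f)|$. The bounded case $\eta=0$ is analogous but uses Theorem \ref{theo:cattiaux}, setting $u_n\coloneqq (c(q,\iota'')\epsilon(\log t_n)^{1-q_++\delta_0}/(2\mathfrak L))^{1-q_+}$; the restriction \eqref{cond:u} is satisfied for large $n$ because $(c(q,\iota'')\lfloor t_n\rfloor/\sqrt{t_n})^{1-q_+}\sim t_n^{(1-q_+)/2}$ dominates the polylogarithmic $u_n$.

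For the final continuous-time statement for bounded $f$, the key improvement is that the concentration exponent $1/(1-q_+)$ in Theorem \ref{theo:cattiaux} is strictly smaller than $1/(1-\tilde q)$ for any $\tilde q>q_+$. I repeat the Borel--Cantelli argument along the integer sequence $t_n=n$, this time choosing $u_n=(c(q,\iota'')\epsilon/(2\mathfrak L))^{1-q_+}(\log n)^{(1-q_+)/(1-\tilde q)}$. The exponent $(1-q_+)/(1-\tilde q)>1$, so again $\sum_n\mathrm e^{-u_n}<\infty$, yielding $\sqrt n|n^{-1}\int_0^n(f-\mu(f))(X_s)\diff s|/(\log n)^{1/(1-\tilde q)}\to 0$ almost surely. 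To interpolate to continuous $t$, I split, for $t\in[n,n+1]$,
\[
t^{-1}\!\int_0^t (f-\mu(f))(X_s)\diff s = \tfrac{n}{t}\Bigl(n^{-1}\!\int_0^n(f-\mu(f))(X_s)\diff s\Bigr) + t^{-1}\!\int_n^t(f-\mu(f))(X_s)\diff s.
\]
The first summand, after multiplication by $\sqrt t$, is dominated by $\sqrt n|n^{-1}\int_0^n(f-\mu(f))(X_s)\diff s|$ and is thus handled by the subsequence statement (using monotonicity of $t\mapsto(\log t)^{1/(1-\tilde q)}$). The second is bounded deterministically by $2\lVert f\rVert_\infty/n$, so its contribution after multiplication by $\sqrt t$ and division by $(\log t)^{1/(1-\tilde q)}$ vanishes as $n\to\infty$. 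Combining these two facts delivers the claim.

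The main obstacle is less analytical than bookkeeping: carefully threading $u$ through the admissibility conditions in Theorem \ref{theo:cattiaux} (both the lower bound on $u$, which determines the $\delta$-restriction, and the $t$-dependent upper bound in \eqref{cond:u}, which must be checked for the sequence chosen in Borel--Cantelli) and verifying that the chosen $u_n$ grow slowly enough to stay below the upper threshold yet fast enough to make the tail probabilities summable. Everything else is a direct consequence of Theorem \ref{prop:conc} or Theorem \ref{theo:cattiaux} combined with Markov's inequality and a straightforward interpolation for the continuous-time refinement.
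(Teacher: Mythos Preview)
Your argument follows essentially the same route as the paper's: direct substitution of $u$ into the concentration inequalities for the PAC bound, then Borel--Cantelli for the almost sure statements. Two minor points of divergence are worth noting. First, for summability along $(t_n)$ the paper obtains a uniform tail bound $\PP^\mu(|U_t|>\varepsilon)\le t^{-2}$ by taking $u=2\log t$ and then invokes an integral comparison argument (borrowed from \cite[Lemma 3.1]{bosq97}) to deduce $\sum_n\PP^\mu(|U_{t_n}|>\varepsilon)<\infty$ from $\inf_n(t_n-t_{n-1})>0$; you instead use $t_n\gtrsim n$ directly, which is equally valid and arguably more transparent. Second, and more substantively, for the continuous-time refinement the paper does \emph{not} interpolate explicitly: it appeals again to \cite[Lemma 3.1]{bosq97}, which states that almost sure convergence along all sequences upgrades to convergence in $t$ provided $t\mapsto U_t$ is uniformly continuous $\PP^\mu$-a.s., a property that is easy to verify for bounded $f$. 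Your hands-on interpolation over $[n,n+1]$ is a correct and self-contained alternative that avoids the external reference, at the cost of being specific to this situation rather than a reusable lemma.
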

\begin{proof} 
The first two assertions immediately follow from Theorem \ref{theo:cattiaux} and Theorem \ref{prop:conc}. For any $\varepsilon > 0$ and $\delta>0$, there exists $t(\varepsilon) \geq \mathrm{e}$ such that, for any $t \geq t(\varepsilon)$, we have \[\mathfrak{L}\{(\mathrm{e}\mathfrak{W})\vee (c(q,\iota^{\prime\prime})^{-1}+1)\}(2 \log t)^{-\frac{\delta_0}{1-q_+}} \leq \varepsilon.
\]
Consequently, by Theorem \ref{theo:cattiaux} and Theorem \ref{prop:conc}, it follows that for 
\[U_t \coloneqq \sqrt{t}(2\log t)^{-(\varsigma(\eta,q,q^\prime) + \delta_0)} \Big( \frac{1}{t}\int_0^{t} f(X_s) \diff{s} - \mu(f)\Big)\]
and $t \geq t(\varepsilon)$ such that, in case $\eta = 0$, additionally $2 \log t \geq \mathfrak{c} (1+q_+)(1-q_+)^{-(1-q_+)/2}$,
\[\PP^\mu(\lvert U_t \rvert > \varepsilon) \leq \PP^\mu \Big(\lvert \mathbb{G}_t(f) \rvert > \mathfrak{L}\{(\mathrm{e}\mathfrak{W})\vee (c(q,\iota^{\prime\prime})^{-1}+1)\} (2\log t)^{\varsigma(\eta,q,q^\prime)}\Big) \leq t^{-2}.\]
Thus, 
\[\PP^\mu(\lvert U_t \rvert > \varepsilon) \leq \one_{[0,t(\varepsilon))} + t^{-2} \one_{[t(\varepsilon), \infty)} \eqqcolon g_{\varepsilon}(t), \quad t > 0.\]
Since $g_{\varepsilon} \in L^1(\R_+)$ and is decreasing, it follows for $a \coloneqq \inf_{n \in \N} (t_{n+1} - t_n) > 0$
\[\infty > \int_{t_n}^{\infty} g_\varepsilon(t) \diff{t} \geq \sum_{m \geq n} (t_{m+1} - t_m) g_\varepsilon(t_{m+1}) \geq a \sum_{m \geq n+1} g_\varepsilon(t_{m}) \geq a \sum_{m \geq n+1} \PP^\mu(\lvert U_{t_m} \rvert > \varepsilon).\]
Hence, for any $\varepsilon > 0$, $\sum_{n \in \N} \PP^\mu (\lvert U_{t_n} \rvert > \varepsilon) < \infty$ such that Borel--Cantelli implies $\lim_{n \to \infty} U_{t_n} = 0$, $\PP^\mu\text{-a.s.}$,
which gives \eqref{eq:almostsure1}. This argument is borrowed from the proof of Lemma 3.1 in \cite{bosq97}. By the same lemma, it follows from the above that we even have convergence along any sequence $(\tilde{t}_n)_{n \in \N}$, $\PP^\mu$-a.s., provided that the map $t \mapsto U_t$ is uniformly continuous $\PP^\mu$-a.s. This can be easily verified when $f$ is bounded (see, e.g., the proof of Proposition 4.3 in \cite{bosq97}), which proves the last assertion.
\end{proof}

To get a non-stationary PAC bound, we consider the burn-in sample average 
\[\mathbb{H}_{v,t}(f) \coloneqq \frac{1}{\sqrt{t}} \mathbb{G}_t(f) \circ \theta_v = \frac{1}{t}\int_{v}^{v+t} f(X_s) \diff{s}, \quad t > 0,v \geq 0,\]
with burn-in length $v$.
Our naming convention follows the MCMC literature, where a standard procedure of dealing with non-stationary simulation procedures is to run the simulation algorithm for a certain amount of time before collecting samples, which is usually referred to as the burn-in.

\begin{corollary} \label{coro:burnin}
Let $\varepsilon > 0, \delta \in (0,1)$ such that $\delta < 2\mathrm{e}^{-2}$ if $\eta > 0$ and $\delta < 4\exp(-\mathfrak{c} (1+q_+)(1-q_+)^{-(1-q_+)/2})$ if $\eta = 0$.
Let also $\nu$ be some probability distribution such that $V_{q_+} \in L^1(\nu)$. 
Choose some $\iota^{\prime\prime} \in (0,\iota^\prime)$, and define $C \coloneqq C(q_+)c(\iota^{\prime\prime})\lVert V_{q_+} \rVert_{L^{1}(\nu)}$, where $c(\iota^{\prime \prime})$ is some constant such that 
\[\forall t\geq 1:\quad (1+t)^{\frac{2q_+}{1+q_+}} \mathrm{e}^{-(\iota^\prime t)^{(1-q_+)/(1+q_+)}} \leq c(\iota^{\prime\prime})\mathrm{e}^{-(\iota^{\prime \prime} t)^{(1-q_+)/(1+q_+)}}.\]
Then, for $t \geq \Psi(\varepsilon,\delta/2)$ and burn-in length $v \geq 1\vee (\log (2C/\delta))^{(1+q_+)/(1-q_+)}/\iota^{\prime \prime}$, we have, for any $f \in \mathcal{G}(\eta, \mathfrak{L})$, 
\[\PP^\nu(\lvert \mathbb H_{v,t}(f) - \mu(f) \rvert \leq \varepsilon) \geq 1 - \delta.\]
\end{corollary}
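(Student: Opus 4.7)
The plan is a standard burn-in argument: shift by $v$, apply the Markov property to reduce to the stationary concentration bound from Corollary~\ref{coro:pac}, and control the distributional mismatch of $X_v$ versus $\mu$ using the subexponential total variation bound \eqref{eq:subexp_tv}. More precisely, by shift invariance and the Markov property,
\[
\PP^\nu\bigl(\lvert \mathbb{H}_{v,t}(f) - \mu(f) \rvert > \varepsilon\bigr) = \E^\nu\bigl[\phi(X_v)\bigr] = \int_{\R^d} \phi(x)\, \PP^\nu(X_v \in \diff{x}),
\]
where $\phi(x) \coloneqq \PP^x(\lvert t^{-1}\!\int_0^t f(X_s)\diff{s} - \mu(f)\rvert > \varepsilon) \in [0,1]$. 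Since $\phi$ is bounded by $1$, we may split
\[
\int \phi \, \diff(\PP^\nu(X_v \in \cdot)) \leq \mu(\phi) + \big\lVert \PP^\nu(X_v \in \cdot) - \mu \big\rVert_{\mathrm{TV}}.
\]
The first term is exactly the stationary deviation probability, so Corollary~\ref{coro:pac} gives $\mu(\phi) \leq \delta/2$ as soon as $t \geq \Psi(\varepsilon,\delta/2)$ (where the restrictions on $\delta$ come from the stationary result applied at level $\delta/2$).

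Next I would bound the second term by integrating the pointwise estimate \eqref{eq:subexp_tv} against $\nu$, which yields
\[
\big\lVert \PP^\nu(X_v \in \cdot) - \mu \big\rVert_{\mathrm{TV}} \leq C(q_+)\lVert V_{q_+}\rVert_{L^1(\nu)}(1+v)^{\frac{2q_+}{1+q_+}} \e^{-(\iota^\prime v)^{(1-q_+)/(1+q_+)}}.
\]
For $v \geq 1$, the definition of $c(\iota^{\prime\prime})$ absorbs the polynomial prefactor into the exponent to give $C\,\e^{-(\iota^{\prime\prime}v)^{(1-q_+)/(1+q_+)}}$. Requiring this to be at most $\delta/2$ inverts to $v \geq (\log(2C/\delta))^{(1+q_+)/(1-q_+)}/\iota^{\prime\prime}$, which is exactly the second hypothesis on $v$. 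Combining the two bounds produces the claimed $1-\delta$ coverage.

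The argument is really a routine reduction; there is no genuine analytic obstacle beyond bookkeeping constants. The only subtlety to watch is that Corollary~\ref{coro:pac} is stated under $\PP^\mu$, so one must verify that the Markov property truly reduces $\PP^\nu(\,\cdot \circ \theta_v \in \,\cdot\,)$ to integration of the starting-point functional $\phi$ against $\PP^\nu(X_v \in \cdot)$, and that $\mu\,$-integrating $\phi$ recovers $\PP^\mu$ of the shifted event by stationarity of $\mu$. Once that identity is in place, the rest is the triangle inequality on total variation plus the elementary inversion of the subexponential tail for the choice of $v$.
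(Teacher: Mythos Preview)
Your proposal is correct and follows essentially the same route as the paper: both use the Markov property to write the burned-in deviation probability as $\E^\nu[\phi(X_v)]$ (the paper writes $g$ for your $\phi$), split off $\mu(\phi)$ to invoke Corollary~\ref{coro:pac} at level $\delta/2$, and control the remaining mismatch by the subexponential TV bound \eqref{eq:subexp_tv} combined with the definition of $c(\iota^{\prime\prime})$. The only cosmetic difference is that the paper bounds $\lvert \E^x[g(X_v)] - \mu(g)\rvert$ pointwise in $x$ before integrating against $\nu$, whereas you integrate first and then bound $\lVert \PP^\nu(X_v \in \cdot) - \mu\rVert_{\mathrm{TV}}$; these are equivalent.
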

\begin{proof} 
Under the given assumptions, \eqref{eq:subexp_tv} implies 
\begin{equation} \label{eq:subexp_tv3}
\lVert \PP^x(X_t \in \cdot) - \mu \rVert_{\mathrm{TV}} \leq C \frac{V_{q_+}(x)}{\lVert V_{q_+} \rVert_{L^1(\nu)}} \mathrm{e}^{-(\iota^{\prime\prime} t)^{(1-q_+)/(1+q_+)}}.
\end{equation}
Define $g(y) \coloneqq \PP^y(\lvert t^{-1}\int_0^t f(X_s) \diff{s} - \mu(f) \rvert > \varepsilon)$.
By the Markov property, \eqref{eq:subexp_tv3} and the magnitude of the burn-in $v$, for any $x \in \R^d$,
\begin{align*}
&\Big\vert \PP^x(\lvert \mathbb{H}_{v,t}(f) - \mu(f) \rvert > \varepsilon) - \PP^\mu\Big(\Big\vert \frac{1}{t}\int_0^t f(X_s) \diff{s} - \mu(f)  \Big\vert > \varepsilon \Big)\Big\vert \\
&\quad = \big\lvert \E^x[g(X_v)] - \mu(g) \big\rvert \leq \lVert \PP^x(X_v \in \cdot) - \mu \rVert_{\mathrm{TV}} \leq C \frac{V_{q_+}(x)}{\lVert V_{q_+} \rVert_{L^1(\nu)}} \mathrm{e}^{-(\iota^{\prime\prime} v)^{(1-q_+)/(1+q_+)}} \leq \frac{V_{q_+}(x)}{\lVert V_{q_+} \rVert_{L^1(\nu)}}\ \frac{\delta}{2}.
\end{align*}
Thus, 
\begin{align*}
&\Big\vert \PP^\nu(\lvert \mathbb{H}_{v,t}(f) - \mu(f) \rvert > \varepsilon) - \PP^\mu\Big(\Big\vert \frac{1}{t}\int_0^t f(X_s) \diff{s} - \mu(f)  \Big\vert > \varepsilon\Big) \Big\vert\\
&\quad \leq \int_{\R^d}\Big\vert \PP^x(\lvert \mathbb{H}_{v,t}(f) - \mu(f) \rvert > \varepsilon) - \PP^\mu\Big(\Big\vert \frac{1}{t}\int_0^t f(X_s) \diff{s} - \mu(f)  \Big\vert >\varepsilon \Big) \Big\vert \, \nu(\diff{x})  \leq \frac{\delta}{2}.
\end{align*} 
Consequently, using $t \geq \Psi(\varepsilon,\delta/2)$ and \eqref{eq:pac}, it follows by the triangle inequality that $\PP^\nu(\lvert \mathbb{H}_{v,t}(f) - \mu(f) \rvert > \varepsilon) \leq \delta.$
\end{proof}

\subsection{Discrete observations} \label{sec:discrete}
We now derive concentration inequalities for discrete observations from our continuous observation results by using the approximation strategy from \cite{galt13}. 
In \cite{galt13}, only bounded functions $f$ and  scalar exponentially ergodic diffusions in the quite strong regime \ref{cond:drift} with $q= -1$ are considered, which in particular implies sub-Gaussian tails of the invariant density. 
We demonstrate how this can be extended to the multivariate case for unbounded functions $f$ under less restrictive ergodicity assumptions.
For this purpose, the following technical key result from \cite{galt13} is of central importance.

\begin{lemma}{\cite[Proposition A.1]{galt13}} \label{lem:galt}
Let $(\Omega,\mathcal{F},(\mathcal{F}_j)_{j=1,\ldots,n},\PP)$ be a filtered probability space and $(\mathcal{X}_j)_{j=1,\ldots,n}$ be a random vector such that, for all $j \in \{1,\ldots,n\}$, $\mathcal{X}_j$ is $\mathcal{F}_j$-measurable and in $L^p(\PP)$ for some $p \geq 2$. Then, for 
\[b_{j,n}(p) \coloneqq \Big(\E\Big[\Big(\lvert \mathcal{X}_j \rvert \sum_{k=j}^n \lvert \E[\mathcal{X}_k \vert \mathcal{F}_j] \rvert \Big)^{p/2} \Big] \Big)^{2/p}, \quad j=1,\ldots,n,\]
we have 
\[\Big\Vert \sum_{j=1}^n \mathcal{X}_j \Big\Vert_{L^p(\PP)} \leq \Big(2p\sum_{j=1}^n b_{j,n}(p) \Big)^{1/2}.\]
\end{lemma}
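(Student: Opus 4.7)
The plan is to treat the case $p = 2$ directly via an algebraic identity combined with the tower property, and then pass to general $p \geq 2$ via a Gordin-type martingale approximation and the Burkholder--Davis--Gundy inequality.

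For $p = 2$, set $T_j \coloneqq \sum_{k=j}^n \mathcal{X}_k$, so $S_n = T_1$. Expanding both sides yields the elementary identity
\[
S_n^2 + \sum_{j=1}^n \mathcal{X}_j^2 \;=\; 2\sum_{j=1}^n \mathcal{X}_j T_j,
\]
and in particular $S_n^2 \leq 2\sum_j \mathcal{X}_j T_j$. Taking expectations, using the $\mathcal{F}_j$-measurability of $\mathcal{X}_j$ to write $\E[\mathcal{X}_j T_j] = \E[\mathcal{X}_j\E[T_j\mid\mathcal{F}_j]]$, and then applying conditional Jensen to $|\E[T_j\mid\mathcal{F}_j]| \leq \sum_{k\geq j}|\E[\mathcal{X}_k\mid\mathcal{F}_j]|$ gives
\[
\E[S_n^2] \;\leq\; 2\sum_j \E\Bigl[|\mathcal{X}_j|\sum_{k\geq j}|\E[\mathcal{X}_k\mid\mathcal{F}_j]|\Bigr] \;=\; 2\sum_j b_{j,n}(2),
\]
which is in fact slightly tighter than the stated bound for $p = 2$.

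For $p > 2$, I would use the Gordin-type decomposition. Setting $R_j \coloneqq \E[T_j\mid\mathcal{F}_j]$ with $R_{n+1} \coloneqq 0$, the pointwise identities $\mathcal{X}_j = R_j - \E[T_{j+1}\mid\mathcal{F}_j]$ and $\E[R_{j+1}\mid\mathcal{F}_j] = \E[T_{j+1}\mid\mathcal{F}_j]$ yield by telescoping
\[
S_n - \E[S_n] \;=\; \sum_{j=1}^n D_j, \qquad D_j \coloneqq R_j - \E[R_j\mid\mathcal{F}_{j-1}],
\]
where $(D_j)$ is an $\mathcal{F}_j$-martingale difference sequence. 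The Burkholder--Davis--Gundy inequality, whose sharp constant in $L^p$ is of order $\sqrt{p}$ for $p\geq 2$, then yields
\[
\|S_n - \E[S_n]\|_p \;\leq\; c\sqrt{p}\,\Bigl\|\sum_j D_j^2\Bigr\|_{p/2}^{1/2}.
\]

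The main obstacle is bounding $\|\sum_j D_j^2\|_{p/2}$ by a constant multiple of $\sum_j b_{j,n}(p)$, while also absorbing the non-random shift $\E[S_n]$ that the martingale decomposition leaves behind. A naive Minkowski estimate combined with $|R_j|\leq \sum_{k\geq j}|\E[\mathcal{X}_k\mid\mathcal{F}_j]|$ misses the leading $|\mathcal{X}_j|$-factor present in the definition of $b_{j,n}(p)$ and therefore produces the wrong scaling. To recover the correct expression, one must exploit the sharper decomposition $R_j = \mathcal{X}_j + \sum_{k>j}\E[\mathcal{X}_k\mid\mathcal{F}_j]$ and apply the $p=2$ identity recursively to the auxiliary sum $\sum_j \mathcal{X}_j R_j$, a recursion that propagates the $\sqrt{p}$ factor into the final constant. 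The residual $\E[S_n]$ is then absorbed using $(\E S_n)^2 \leq \|S_n\|_2^2 \leq 2\sum_j b_{j,n}(2)\leq 2\sum_j b_{j,n}(p)$, where the last inequality follows from $\|\cdot\|_{p/2}\geq \|\cdot\|_1$ for $p\geq 2$. Tracking constants carefully through this chain yields the prefactor $\sqrt{2p}$ asserted in the lemma.
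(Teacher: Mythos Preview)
First, note that the paper does not supply its own proof of this lemma: it is quoted from \cite[Proposition~A.1]{galt13} and used as a black box. So there is no in-paper argument to compare against; the relevant benchmark is the original source, which in turn follows Rio's moment inequality and the Dedecker--Doukhan technique.

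Your $p=2$ argument is correct and is indeed the first step of the standard proof.

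For $p>2$ there is a genuine gap. The Gordin decomposition and the BDG step are fine, and you correctly flag that the crux is bounding $\bigl\lVert\sum_j D_j^2\bigr\rVert_{p/2}$ by $\sum_j b_{j,n}(p)$. But your proposed fix (``apply the $p=2$ identity recursively to $\sum_j \mathcal{X}_j R_j$'') is too vague to be a proof, and the martingale-difference route is structurally on the wrong side of the desired inequality. Minkowski on the square bracket yields $\sum_j \lVert D_j\rVert_p^2 \leq 4\sum_j \lVert A_j\rVert_p^2$ with $A_j \coloneqq \sum_{k\geq j}\lvert\E[\mathcal{X}_k\mid\mathcal{F}_j]\rvert$, whereas $b_{j,n}(p) = \bigl\lVert\,\lvert\mathcal{X}_j\rvert A_j\bigr\rVert_{p/2}$. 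Since the $k=j$ term of $A_j$ is $\lvert\mathcal{X}_j\rvert$, one always has $b_{j,n}(p) \leq \lVert A_j^2\rVert_{p/2} = \lVert A_j\rVert_p^2$, so the Gordin bound dominates $b_{j,n}(p)$ rather than being controlled by it. Taking all $\mathcal{X}_k\equiv 1$ makes this quantitative: then $A_j=n-j+1$, so $\sum_j \lVert A_j\rVert_p^2 \asymp n^3$ while $\sum_j b_{j,n}(p)=\sum_j(n-j+1)\asymp n^2$. The square bracket therefore overshoots by a full factor of~$n$, and no recursion on $\sum_j \mathcal{X}_j R_j$ can repair this without abandoning the BDG structure altogether.

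The argument in the cited reference does not pass through a martingale approximation. It works directly with $\lvert S_n\rvert^p$ via a convexity/telescoping device together with H\"older with exponents $p/(p-2)$ and $p/2$; the conditioning on $\mathcal{F}_j$ is performed \emph{inside} the resulting expectations, which is precisely what produces the product $\lvert\mathcal{X}_j\rvert\sum_{k\geq j}\lvert\E[\mathcal{X}_k\mid\mathcal{F}_j]\rvert$ and the linear-in-$p$ prefactor. You should consult the original for the mechanism.
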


Let $\Delta = \Delta_n \in (0,1]$ be some fixed sampling distance, and suppose that we have partial observations $(X_{\Delta k})_{k=1,\ldots,n}$ of the subexponentially ergodic diffusion process $\X$ satisfying the coefficient assumptions from Section \ref{sec:basics}.
Recall that
\[\mathbb{G}_{n,\Delta}(f)=\frac{1}{\sqrt{n\Delta}} \sum_{k=1}^n f(X_{k\Delta}) \Delta\]
denotes the discretized version of the scaled additive functional $\mathbb{G}_{n\Delta}(f)$. 
Then, for fixed $f = \tilde{f} - \mu(\tilde{f})$, we may write
\begin{equation}\label{eq:approx_discrete}
\mathbb{G}_{n,\Delta}(f) = \mathbb{G}_{n\Delta}(f) + \frac{1}{\sqrt{n\Delta}}\mathbb{A}_{n,\Delta},
\end{equation}
with discretization error
\[\mathbb{A}_{n,\Delta} \coloneqq \sum_{k=1}^n \int_{(k-1)\Delta}^{k\Delta} (\tilde{f}(X_{k\Delta}) - \tilde f(X_t)) \diff{t}.\]
With our results from Section \ref{sec:conc}, it is now clear that we must analyze the concentration of $\mathbb{A}_{n,\Delta}$ around $0$ to obtain concentration inequalities for the discrete additive functional $\mathbb{G}_{n,\Delta}(f)$. 
To do so for unbounded functionals $\tilde{f}$, we exploit polynomial $f$-norm convergence from Proposition \ref{prop:f_conv}.

\begin{theorem} \label{theo:discrete_conc}
Let $\eta_1,\eta_2, \eta_3 \geq 0$ and $\tilde{f} \in \mathcal{G}(\eta_1, \mathfrak{L}) \cap \mathcal{W}^{2,p}_{\mathrm{loc}}(\R^d)$, $p \geq d$, with $\nabla \tilde{f} \in L_{\mathrm{loc}}^{2d}(\R^d)$ such that $\lVert \nabla \tilde{f}(x) \rVert \lesssim 1+ \lVert x \rVert^{\eta_2}$ and, for all $i,j=1,\ldots,n$, $\lvert \uppartial_{x_i,x_j} \tilde{f}(x)\rvert \lesssim 1 + \lVert x \rVert^{\eta_3}$. 
Define $\alpha = \alpha(q^\prime,\eta_2,\eta_3) \coloneqq (q^\prime + \eta_2) \vee \eta_3$. In case $q > -1$, let $\tilde{\gamma} > 1+q$, $r > 1$ such that $\tilde\gamma - (1+q) > r( \alpha \vee (1+q)/(r-1))$.
If $q = -1$, set $\tilde{\gamma} = \alpha$.
Then, for $f = \tilde{f} - \mu(\tilde{f})$, there exists a constant $\mathfrak{D}$ that is independent of $n,p,\Delta$ such that, for any $p \geq 2$,
\[\left\Vert \mathbb{G}_{n,\Delta}(f) \right\Vert_{L^p(\PP^\mu)} \leq \mathfrak{D} \Big(\sqrt{n}\Delta^{3/2} + \Delta p^{\frac{\max\{(\tilde\gamma + 2\alpha +1-q_+)/2,\eta_2+1 -q_+\}}{1-q_+}} + p^{\frac{1}{2} + \frac{\eta + q^\prime + q +1}{1-q_+}} \Big) \eqqcolon \Phi(n,\Delta,p).\]
Consequently,
\[\PP^\mu(\lvert \mathbb{G}_{n,\Delta}(f) \rvert > \mathrm{e}\Phi(n,\Delta,u)) \leq \mathrm{e}^{-u}, \quad u \geq 2.\]
\end{theorem}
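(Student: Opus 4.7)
My plan is to derive an $L^p$-bound on $\mathbb G_{n,\Delta}(f)$ and then obtain the tail inequality by Markov's inequality at $u = p$, as in the passage from \eqref{eq:mom} to \eqref{eq:conc} for Theorem \ref{prop:conc}. Using the decomposition \eqref{eq:approx_discrete} and Minkowski, I split $\lVert\mathbb G_{n,\Delta}(f)\rVert_{L^p(\PP^\mu)}$ into $\lVert\mathbb G_{n\Delta}(f)\rVert_{L^p(\PP^\mu)}$ — directly controlled by Theorem \ref{prop:conc} (with $\eta = \eta_1$), yielding the third summand of $\Phi$ — and the rescaled discretization error $(n\Delta)^{-1/2}\lVert\mathbb A_{n,\Delta}\rVert_{L^p(\PP^\mu)}$, on which the bulk of the work is concentrated.

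The Sobolev regularity of $\tilde f$ justifies applying the It\={o}--Krylov formula on each interval $[(k-1)\Delta, k\Delta]$. After Fubini, this produces $\mathbb A_{n,\Delta} = \mathbb A^{(1)}_{n,\Delta} + \mathbb A^{(2)}_{n,\Delta}$ where $\mathbb A^{(1)}_{n,\Delta}$ is the drift contribution built from integrals of $L\tilde f$ and $\mathbb A^{(2)}_{n,\Delta}$ is a continuous martingale whose quadratic variation is of order $\Delta^2 \int_0^{n\Delta}\lVert \sigma^\top(X_s)\nabla\tilde f(X_s)\rVert^2\,\mathrm ds$. The martingale piece is handled by BDG and Jensen exactly as in Lemma \ref{lem:mart_approx}; combining this with $\lVert\nabla\tilde f(x)\rVert \lesssim 1+\lVert x\rVert^{\eta_2}$ and Lemma \ref{lem:l_p} yields $\lVert\mathbb A^{(2)}_{n,\Delta}\rVert_{L^p(\PP^\mu)} \lesssim \sqrt n\,\Delta^{3/2} p^{1/2+\eta_2/(1-q_+)}$, which after division by $\sqrt{n\Delta}$ is absorbed into the middle summand of $\Phi$.

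The drift piece $\mathbb A^{(1)}_{n,\Delta} = \sum_j \mathcal X_j$ with $\mathcal X_j := \int_{(j-1)\Delta}^{j\Delta}(s-(j-1)\Delta) L\tilde f(X_s)\,\mathrm ds$ is delicate and requires Lemma \ref{lem:galt} applied with $\mathcal F_j := \sigma(X_s : s \leq j\Delta)$. The growth $|L\tilde f(x)| \lesssim 1+\lVert x\rVert^\alpha$ follows from the hypotheses on $b$ and on the second derivatives of $\tilde f$. Splitting $L\tilde f = (L\tilde f - \mu(L\tilde f)) + \mu(L\tilde f)$, the constant mean piece contributes the deterministic $n\Delta^2/2\cdot\mu(L\tilde f)$, rescaling to the $\sqrt n\Delta^{3/2}$ term of $\Phi$. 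For the centred remainder, the Markov property reduces the key Galtchouk quantity $\sum_{k\geq j}\lvert\E[\mathcal X_k\mid\mathcal F_j]\rvert$ to controlling $\lvert\E^y[L\tilde f(X_u) - \mu(L\tilde f)]\rvert$. Here I invoke Proposition \ref{prop:f_conv} with the inverse Young pair arising from H\"older-conjugate powers of exponent $r$ (precisely the $r$ of the hypothesis), producing the bound $\lvert\E^y[L\tilde f(X_u)-\mu(L\tilde f)]\rvert \lesssim \tilde V_{\tilde\gamma}(y)(1+u)^{-(\tilde\gamma-(1+q))/(r^{\ast}(1+q))}$ with $r^{\ast} = r/(r-1)$. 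The two-sided hypothesis $\tilde\gamma-(1+q) > r(\alpha \vee (1+q)/(r-1))$ is tailored precisely so that both (i) $L\tilde f$ is dominated by the Young-norm function $\Psi_2 \circ f_{\tilde\gamma,q}$, and (ii) the rate in $u$ is integrable, giving $\sum_{k\geq j}\lvert\E[\mathcal X_k\mid\mathcal F_j]\rvert\lesssim \Delta\tilde V_{\tilde\gamma}(X_{j\Delta})$. Cauchy--Schwarz together with Lemma \ref{lem:l_p} then yields $b_{j,n}(p)\lesssim \Delta^3 p^{(\tilde\gamma+2\alpha)/(1-q_+)}$, and Lemma \ref{lem:galt} with rescaling delivers the remaining middle term of $\Phi$; the case $q=-1$ is handled analogously via the exponential-rate variant of Proposition \ref{prop:f_conv}.

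The principal difficulty I anticipate lies in the Young-pair balancing act above: one must find a single pair that simultaneously controls the polynomial growth of $L\tilde f$ in the $f$-norm and delivers integrable polynomial decay in time, and the full flexibility of Proposition \ref{prop:f_conv} is essential for this trade-off. Once the $L^p$-bound is assembled, the tail bound follows from Markov's inequality at $p = u \geq 2$.
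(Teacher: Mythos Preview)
Your proposal is correct and follows essentially the same route as the paper: the same It\={o}--Krylov decomposition of $\mathbb A_{n,\Delta}$, the same application of Lemma \ref{lem:galt} to the drift blocks with the H\"older-type Young pair $(\Psi_1,\Psi_2)$ of exponent $r$ drawn from Proposition \ref{prop:f_conv}, and the same closing appeal to Theorem \ref{prop:conc} plus Markov's inequality. The only tactical difference is your handling of the martingale piece: you use stochastic Fubini to write $\mathbb A^{(2)}_{n,\Delta}$ as a single stochastic integral with integrand bounded by $\Delta\lVert\sigma^\top\nabla\tilde f\rVert$ and apply BDG once, whereas the paper bounds each $\chi_k$ separately and then feeds them into Lemma \ref{lem:galt}; your route is slightly cleaner and in fact saves a factor $p^{1/2}$ on that contribution, but the stated $\Phi(n,\Delta,p)$ is unchanged since the drift term dominates the middle summand anyway.
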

\begin{proof} 
Let us write $t_k = k\Delta$ and $a \lesssim b$ if $a \leq Cb$ for some constant $C$ independent of $p,n,\Delta$. 
The It\={o}--Krylov formula gives for $t \in [0, t_k]$
\begin{align*}
\tilde{f}(X_{k\Delta}) - \tilde{f}(X_t) &= \int_t^{t_k} L\tilde{f}(X_s) \diff{s} + \int_t^{t_k} \nabla \tilde{f}(X_s)^\top \sigma(X_s) \diff{W_s}\\
&= \mu(L\tilde{f})(t - t_k) + \Phi_k(t) + \omega_k(t),
\end{align*}
where $\Phi_k(t) \coloneqq \int_t^{t_k} \phi(X_s) \diff{s}$ for $\phi(y) = L\tilde{f}(y) - \mu(L\tilde{f})$ and $\omega_k(t) \coloneqq \int_t^{t_k} \nabla \tilde{f}(X_s)^\top \sigma(X_s) \diff{W_s}$. 
Thus, setting $\mathcal{X}_k = \int_{t_{k-1}}^{t_k} \Phi_k(t) \diff{t}$ and $\chi_k = \int_{t_{k-1}}^{t_k} \omega_k(t) \diff{t}$, we have 
\begin{equation}\label{eq:decomp}
\mathbb{A}_{n,\Delta} =  \mu(L\tilde{f}) \frac{n \Delta^2}{2} + \sum_{k=1}^n \mathcal{X}_k + \sum_{k=1}^n \chi_k.
\end{equation}
The polynomial bounds on the gradient and the Hessian of $\tilde{f}$ together with $\sup_{x \in \R^d} \lVert \sigma(x) \rVert < \infty$ and $\lVert b(x) \rVert \lesssim 1 + \lVert x \rVert^{q^\prime}$ imply that $\lvert L\tilde{f}(x) \rvert \lesssim 1 + \lVert x \rVert^{\alpha}$ for $\alpha \coloneqq (q^\prime + \eta_2) \vee \eta_3$. 
Suppose first $q > -1$, and let $\tilde\gamma > 1+q$ and $r > 1$ such that $\tilde\gamma - (1+q) > r( \alpha \vee (1+q)/(r-1))$.
This implies that $(\tilde\gamma - (1+q))/r >\alpha$ and $(\tilde\gamma -(1+q))/(s(1+q)) > 1$ for $s = r/(r-1)$. 
Thus, if we choose the inverse Young functions $\Psi_1(x) = (sx)^{1/s}$ and $\Psi_2(x) =(rx)^{1/r}$, it follows for $f_{\tilde\gamma,q}(x) = \lVert x \rVert^{\tilde\gamma - (1+q)}$ that $\lvert L\tilde{f} \rvert \lesssim \bm 1 \vee \Psi_2 \circ f_{\tilde\gamma,q}$.
Proposition \ref{prop:f_conv} then yields 
\begin{equation}\label{eq:f_conv}
\lVert \PP^x(X_t \in \cdot) - \mu \rVert_{\bm 1 \vee \Psi_2 \circ f_{\tilde\gamma,q}} \lesssim \tilde{V}_{\tilde\gamma}(x) (1+t)^{-(\tilde\gamma - (1+q))/(s(1+q))}, \quad x \in \R^d, t \geq 0.
\end{equation}
Let $\mathbb{F} = (\mathcal{F}_t)_{t \geq 0}$  be the natural filtration of $(X_t)_{t \geq 0}$. Then, using the Markov property and \eqref{eq:f_conv}, we obtain for $t > u$
\begin{align*}
\lvert \E^\mu[\phi(X_t) \vert \cF_u] \rvert &= \lvert \E^{X_u}[\phi(X_{t-u})] \rvert = \lvert \E^{X_u}[L\tilde{f}(X_{t-u})] - \mu(L\tilde{f})\rvert\\
&\lesssim \lVert \PP^{X_u}(X_{t-u} \in \cdot) - \mu \rVert_{\one \vee \Psi_2 \circ f_{\tilde\gamma,q}} \lesssim \tilde{V}_{\tilde\gamma}(X_{u}) (1+(t-u))^{-(\tilde\gamma - (1+q))/(s(1+q))}.
\end{align*}
For $k > j$, this gives 
\begin{align*}
\E^\mu[\mathcal{X}_k \vert \mathcal{F}_{t_j}] &\lesssim \tilde{V}_{\tilde\gamma}(X_{t_j}) \int_{t_{k-1}}^{t_k} \int_{t}^{t_k} (1+(u-t_j))^{-(\tilde\gamma - (1+q))/(s(1+q))} \diff{u}\diff{t}\\
&\leq \tilde{V}_{\tilde\gamma}(X_{t_j}) \Delta^2 (1+(k - 1 -j)\Delta)^{-(\tilde\gamma - (1+q))/(s(1+q))}.
\end{align*}
Hence, for $j < n$,
\[\sum_{k=j+1}^n \lvert \E^\mu[\mathcal{X}_k \vert \mathcal{F}_{t_j}] \rvert \lesssim \tilde{V}_{\tilde\gamma}(X_{t_j})\Delta^2 \int_0^\infty (1+ \Delta t)^{-(\tilde\gamma-(1+q))/(s(1+q))} \diff{t} = \tilde{V}_{\tilde\gamma}(X_{t_j}) \Delta \frac{s(1+q)}{\tilde\gamma - (1+q)(1+s)},\]
where we used that $(\tilde\gamma - (1+q))/(s(1+q)) > 1$. 
Consequently, letting $b_{j,n}(p)$ be the functional from Lemma \ref{lem:galt}, it follows for $j< n$ from the Cauchy--Schwarz inequality, stationarity  and Lemma \ref{lem:l_p}
\begin{align*} 
b_{j,n}(p) &\leq \lVert \mathcal{X}_j \rVert^2_{L^{2p}(\PP^\mu)} \Big\Vert \sum_{k=j+1}^n \lvert \E^\mu[\mathcal{X}_k \vert \mathcal{F}_{t_j}] \rvert \Big\Vert_{L^p(\PP^\mu)} \lesssim \Delta \lVert \mathcal{X}_j \rVert^2_{L^{2p}(\PP^\mu)} \lVert \tilde{V}_{\tilde\gamma}(X_0) \rVert_{L^p(\PP^\mu)} \lesssim \Delta p^{\frac{\tilde\gamma}{1-q_+}}\lVert \mathcal{X}_j \rVert^2_{L^p(\PP^\mu)}.
\end{align*}
In case $q = -1$, we simply observe that Proposition \ref{prop:f_conv} implies that there exists $\beta > 0$ such that $\lVert \PP^x(X_t \in \cdot) - \mu \rVert_{\tilde{V}_\alpha} \lesssim \tilde{V}_\alpha(x)\exp(-\beta t)$ and hence, proceeding as above, we end up with 
\[b_{j,n}(p) \lesssim \Delta \lVert \mathcal{X}_j \rVert^2_{L^{2p}(\PP^\mu)} \lVert \tilde{V}_\alpha(X_0)\rVert_{L^p(\PP^\mu)} \lesssim \Delta p^{\frac{\alpha}{1-q_+}}\lVert \mathcal{X}_j \rVert^2_{L^p(\PP^\mu)}. \]
Now, stationarity under $\PP^\mu$, Hölder's inequality together with Fubini and  Lemma \ref{lem:l_p} yield 
\begin{align*} 
\lVert \mathcal{X}_j \rVert_{L^p(\mu)}^p &= \E^\mu\Big[\Big(\int_0^\Delta \int_t^\Delta \phi(X_s) \diff{s} \diff{t}\Big)^p\Big] \leq \Delta^{2(p-1)} \int_0^\Delta \int_{t}^\Delta \E^\mu\big[\lvert \phi(X_s) \rvert^p\big] \diff{s} \diff{t}\\
&= \Delta^{2p} \lVert \phi(X_0) \rVert_{L^p(\PP^\mu)}^p \leq c^p \Delta^{2p} p^{p\alpha/(1-q_+)},
\end{align*}
for some constant $c > 0$. 
Thus, we obtain 
\[b_{j,n}(p) \lesssim \Delta^3 p^{(\tilde\gamma+ 2\alpha)/(1-q_+)},\]
and hence by Lemma \ref{lem:galt} 
\begin{equation} \label{eq:boundsum1}
\Big\Vert \sum_{k=1}^n \mathcal{X}_k \Big\Vert_{L^p(\PP^\mu)} \lesssim \sqrt{n \Delta^3} p^{(\tilde\gamma + 2\alpha +1-q_+)/(2(1-q))}.
\end{equation}
Let us now treat $\sum_{k=1}^n \chi_k$. As in the proof of Lemma \ref{lem:mart_approx}, we obtain by the Burkholder--Davis--Gundy inequality, \cite[Proposition 4.2]{barlow82} and Lemma \ref{lem:l_p} that 
\[\E^\mu[\lvert \omega_k \rvert^p] \leq c^p p^{p/2} \lambda_+^p \Delta^{p/2} \lVert \nabla \tilde{f} \rVert_{L^p(\mu)}^p \leq C(\eta_2)^p \lambda_+^p \Delta^{p/2} p^{p/2 + p\eta_2/(1-q_+)}.\]
Therefore, with Hölder's inequality,
\[
\E^\mu[\lvert \chi_k \rvert^p] \leq \Delta^{p-1} \int_{t_{k-1}}^{t_k} \E^\mu[\lvert \omega_k(t)\rvert^p] \diff{t} \leq C(\eta_2)^p \lambda_+^p \Delta^{3p/2} p^{p/2 + p\eta_2/(1-q_+)}.
\]
Let $b^\chi_{n,p}$ be the functional from Lemma \ref{lem:galt} with respect to $(\chi_k)_{k=1,\ldots,n}$ and $(\mathcal{F}_{t_k})_{k=1,\ldots,n}$, and note that, for $k > j$ and $t \in [t_j,t_k]$, $\E^\mu[\omega_k(t) \vert \mathcal{F}_{t_j}] = 0$ since $(\int_0^t\nabla \tilde{f}(X_s)^\top \sigma(X_s) \diff{W_s})_{t \geq 0}$ is an $\mathbb{F}$-martingale. 
Thus,  
\[b_{j,n}^\chi(p) = \E^\mu[\lvert \chi_j \rvert^p]^{2/p} \lesssim \Delta^3 p^{(2\eta_2 + (1-q_+))/(1-q_+)}.\]
Consequently, by Lemma \ref{lem:galt},
\begin{equation} \label{eq:boundsum2}
\Big\Vert \sum_{k=1}^n \chi_k \Big\Vert_{L^p(\PP^\mu)} \lesssim \sqrt{n}\Delta^{3/2} p^{(\eta_2 + 1-q_+)/(1-q_+)}.
\end{equation}
Taking into account that $\mu(\lvert L\tilde{f} \rvert) < \infty$, \eqref{eq:decomp}, \eqref{eq:boundsum1} and \eqref{eq:boundsum2} imply that 
\[\Big\Vert \frac{1}{\sqrt{n\Delta}} \mathbb{A}_{n\Delta} \Big\Vert_{L^p(\PP^\mu)} \lesssim \sqrt{n}\Delta^{3/2} + \Delta p^{\frac{\max\{(\tilde\gamma + 2\alpha +1 -q_+)/2,\eta_2+1-q_+\}}{1-q_+}}.\]
Plugging this bound into \eqref{eq:approx_discrete} and using Theorem \ref{prop:conc}, it follows that there exists some constant $\mathfrak{D}$ that is independent of $n,p,\Delta$ such that, for $p \geq 1$,
\[\Big\Vert \mathbb{G}_{n,\Delta}(f) \Big\Vert_{L^p(\PP^\mu)} \leq \mathfrak{D} \Big(\sqrt{n}\Delta^{3/2} + \Delta p^{\frac{\max\{(\tilde\gamma + 2\alpha +1-q_+)/2,\eta_2+1-q_+\}}{1-q_+}} + p^{\frac{1}{2} + \frac{\eta + q^\prime + q +1}{1-q_+}} \Big).\]
Markov's inequality now yields the asserted concentration inequality.
\end{proof}

\paragraph{PAC bounds}
Similarly to Corollary \ref{coro:pac} and Corollary \ref{coro:burnin}, we can derive PAC bounds for the discrete ergodic average and its burn-in version. 
The proof is identical and therefore omitted.

\begin{corollary} \label{coro:pac_discrete}
Let $\eta_1,\eta_2,\eta_3 \geq 0$ and $f \in \mathcal{G}(\eta_1, \mathfrak{L}) \cap \mathcal{W}^{2,p}_{\mathrm{loc}}(\R^d)$, $p \geq d$, with $\nabla f \in L_{\mathrm{loc}}^{2d}(\R^d)$ such that $\lVert \nabla f(x) \rVert \lesssim 1 + \lVert x \rVert^{\eta_2}$ and, for all $i,j=1,\ldots,d$, $\lvert \uppartial_{x_i,x_j} f(x) \rvert \lesssim 1 + \lVert x \rVert^{\eta_3}$. 
Define $\alpha,\tilde{\gamma}$  as in Theorem \ref{theo:discrete_conc}, and denote
\[\varrho = \varrho(\alpha,\eta_2,\tilde{\gamma},q) \coloneqq \frac{\max\{(\tilde{\gamma} + 2\alpha +1-q_+)/2,\eta_2 + 1 -q_+ \}}{1-q_+}\]
and 
\[\tilde{\varsigma} = \tilde{\varsigma}(\eta_1,q,q^\prime) \coloneqq \frac{1}{2} + \frac{\eta + q^\prime + q +1}{1-q_+}. \]
For $\varepsilon > 0$, $\delta \in (0,\mathrm{e}^{-2})$, suppose that $\Delta < \varepsilon/(3\mathrm{e}\mathfrak{D})$ and 
\[n \geq \Psi(\Delta,\varepsilon,\delta) \coloneqq \frac{1}{\Delta} \left(\frac{3\mathrm{e}\mathfrak{D} \max\big\{\Delta (\log(1/\delta))^{\varrho}, (\log(1/\delta))^{\tilde\varsigma} \big\}}{\varepsilon} \right)^2.\]
Then, 
\[\PP^\mu\Big(\Big \vert \frac{1}{n} \sum_{k=1}^n f(X_{k\Delta}) - \mu(f) \Big\vert \leq \varepsilon \Big) \geq 1- \delta.\]
Moreover, let the discrete burn-in estimator be given by
\[\mathbb{H}_{m,n,\Delta}(f) \coloneqq \frac{1}{\sqrt{n\Delta}} \mathbb{G}_{n,\Delta}(f) \circ \theta_{m\Delta} = \frac{1}{n} \sum_{k=m+1}^{n+m} f(X_{k\Delta}).\]
Then, given the constants $\iota^{\prime\prime}, C$ from Corollary \ref{coro:burnin} and some  initial distribution $\nu$ such that $V_{q_+} \in L^1(\nu)$, for any $n \geq \Psi(\Delta,\varepsilon,\delta/2)$ and burn-in length $m \geq 1\vee \Delta^{-1}(\log(2C/\delta))^{(1+q_+)/(1-q_+)}/\iota^{\prime\prime}$, it holds that 
\[\PP^\nu\Big(\vert \mathbb{H}_{m,n,\Delta}(f) - \mu(f)\vert \leq \varepsilon \Big) \geq 1- \delta.\]
\end{corollary}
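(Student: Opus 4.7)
The plan is to mirror the proofs of Corollary \ref{coro:pac} and Corollary \ref{coro:burnin}, using the discrete concentration inequality from Theorem \ref{theo:discrete_conc} in place of its continuous counterpart Theorem \ref{prop:conc}. Observe that for $f \in \mathcal{G}(\eta_1,\mathfrak{L})$ the centered function $\tilde f \coloneqq f - \mu(f)$ lies in $\mathcal{F}(\eta_1,\mathfrak{L})$ and inherits from $f$ the Sobolev regularity and polynomial growth of its gradient and Hessian required by Theorem \ref{theo:discrete_conc}. Moreover, by the definition of $\mathbb{G}_{n,\Delta}$,
\[
\frac{1}{n}\sum_{k=1}^n f(X_{k\Delta}) - \mu(f) = \frac{1}{\sqrt{n\Delta}}\mathbb{G}_{n,\Delta}(\tilde f),
\]
so it suffices to bound the right-hand side.

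For the stationary bound I would invoke Theorem \ref{theo:discrete_conc} with $u = \log(1/\delta) \geq 2$ (since $\delta < \mathrm{e}^{-2}$), yielding $\PP^\mu(\lvert\mathbb{G}_{n,\Delta}(\tilde f)\rvert > \mathrm{e}\Phi(n,\Delta,\log(1/\delta))) \leq \delta$. The claim then reduces to verifying $\mathrm{e}\Phi(n,\Delta,\log(1/\delta)) \leq \varepsilon\sqrt{n\Delta}$, i.e.,
\[
\mathrm{e}\mathfrak{D}\Big(\Delta \;+\; \sqrt{\tfrac{\Delta}{n}}\,(\log(1/\delta))^\varrho \;+\; \tfrac{(\log(1/\delta))^{\tilde\varsigma}}{\sqrt{n\Delta}}\Big) \;\leq\; \varepsilon.
\]
Splitting the budget into $\varepsilon/3$ per term, the first term gives exactly the assumption $\Delta < \varepsilon/(3\mathrm{e}\mathfrak{D})$, while the remaining two give $n \geq (3\mathrm{e}\mathfrak{D})^2\Delta(\log(1/\delta))^{2\varrho}/\varepsilon^2$ and $n \geq (3\mathrm{e}\mathfrak{D})^2(\log(1/\delta))^{2\tilde\varsigma}/(\varepsilon^2\Delta)$. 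Factoring out $\Delta^{-1}$ and taking a maximum collapses these into $n \geq \Psi(\Delta,\varepsilon,\delta)$.

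For the burn-in statement I would copy the argument of Corollary \ref{coro:burnin} verbatim, shifting time by $m\Delta$ rather than $v$. Setting $g(y) \coloneqq \PP^y\big(\lvert n^{-1}\sum_{k=1}^n f(X_{k\Delta}) - \mu(f)\rvert > \varepsilon\big)$, the Markov property at time $m\Delta$ gives $\PP^\nu(\lvert\mathbb{H}_{m,n,\Delta}(f) - \mu(f)\rvert > \varepsilon) = \E^\nu[g(X_{m\Delta})]$, and the tail estimate \eqref{eq:subexp_tv3} combined with the burn-in condition $m\Delta \geq (\log(2C/\delta))^{(1+q_+)/(1-q_+)}/\iota''$ controls $\lvert\E^\nu[g(X_{m\Delta})] - \mu(g)\rvert$ by $\delta/2$. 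The stationary part, applied at confidence level $\delta/2$ with $n \geq \Psi(\Delta,\varepsilon,\delta/2)$, contributes the remaining $\delta/2$ via the triangle inequality. No genuine obstacle arises, since the mechanism is identical to the continuous case; the only subtlety is the bookkeeping that matches the three-term expansion of $\Phi(n,\Delta,\cdot)/\sqrt{n\Delta}$ against the compact $\max$-form of $\Psi$, which is purely mechanical once the $\varepsilon/3$ allocation is fixed.
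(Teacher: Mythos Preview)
Your proposal is correct and matches the paper's approach; in fact the paper simply states that ``the proof is identical [to that of Corollaries \ref{coro:pac} and \ref{coro:burnin}] and therefore omitted,'' so your detailed $\varepsilon/3$-splitting of $\mathrm{e}\Phi(n,\Delta,\log(1/\delta))/\sqrt{n\Delta}$ and the Markov-property/ergodicity burn-in argument are exactly what the authors had in mind.
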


\section{Applications}
We now demonstrate the usefulness of our probabilistic results in two concrete applications.
While exponential inequalities are important for a multitude of statistical problems (e.g., in the context of adaptive nonparametric estimation or for the verification of uniform convergence results), we will focus in Section \ref{sec:lasso} on the analysis of a high-dimensional  diffusion model under sparsity constraints, which in particular necessitates the use of inequalities for \emph{unbounded} functions. 
Specifically, we will see that Theorem \ref{prop:conc} allows us to derive non-asymptotic error bounds for penalised estimators, which, to the best of our knowledge, are so far only available for Ornstein--Uhlenbeck processes. 
In Section \ref{sec:mcmc}, we use our discrete concentration results from Section \ref{sec:discrete} to derive explicit convergence guarantees for an MCMC algorithm designed to sample from target densities with subexponential tails.

\subsection{Lasso estimation for parametrized drift coefficients}\label{sec:lasso}

As opposed to the now very well understood high-dimensional discrete models (cf., e.g., \cite{buvdg11} or \cite{wain19}), for which a wealth of estimation algorithms including corresponding theoretical results are available, there are still few in-depth studies of estimation problems for high-dimensional continuous-time processes.
Important references in this context are \cite{gaiffas19} and \cite{ciolek20}, who investigate drift estimation in a high-dimensional Ornstein--Uhlenbeck (OU) model under sparsity constraints. 
A remarkable feature is that the restricted eigenvalue property, which usually has to be verified explicitly in discrete models such as linear regression, is already implied by the ergodicity in the specified diffusion model. 
This finding is based on the use of sufficiently sharp probabilistic tools in the form of concentration inequalities suited to the model: 
while \cite{gaiffas19} provide a proof based on functional inequalities allowing to cover only the reversible case, \cite{ciolek20} use Malliavin calculus methods to show that the restricted eigenvalue property is satisfied in the general ergodic OU case.
At the same time, they point out (cf.~their Remark 4.4) that other mathematical methods are needed for proving such concentration phenomena in more general diffusion models.

Motivated by the considerations in \cite{pokern2009}, we outline in this section how our results from Section \ref{sec:conti} can be used to study more general high-dimensional diffusion models.
Suppose that the data $X^T=(X_t)_{0\le t\le T}$ has been generated by the following It\={o} SDE,
\begin{equation}\label{eq:b0}
\d X_t=b_0(X_t)\d t+\sigma_0(X_t)\d W_t,
\end{equation}
$W=(W_t)_{t\ge0}$ a standard $d$-dimensional Brownian motion. 
The diffusion matrix $\sigma_0$ is assumed to be known and we wish to estimate the drift vector $b_0$.
Suppose that both $\sigma_0$ and $b_0$ are globally Lipschitz, that $\sigma_0$ is bounded, that $a_0 \coloneqq \sigma_0 \sigma_0^\top$ is uniformly elliptic, i.e., 
\[
\exists \lambda_-,\lambda_+>0\ \forall x,\eta\in\mathbb R^d: \quad \lambda_-\lVert\eta\rVert^2 \leq \left\langle \eta,a_0(x)\eta\right\rangle\le \lambda_+\lVert\eta\rVert^2,
\]
and that the drift condition 
\begin{enumerate}[label = ($\mathscr{L}(q)$), ref = ($\mathscr{L}(q)$), leftmargin=*]
\item \label{cond:drift_lasso} there exists $M_0, \mathfrak{r} > 0$ such that 
\[ \forall \lVert x \rVert > M_0: \, \langle b_0(x), x\slash \lVert x \rVert\rangle \leq -\mathfrak{r} \lVert x \rVert^{-q}\]
\end{enumerate}
is satisfied for some $q \in [-1,1)$, such that the process falls into the ergodic framework of Section \ref{sec:basics}.
Denote by $\mu_0$ and $\rho_0$ its invariant measure and the corresponding invariant density, respectively. We further assume that $X^T$ is the stationary solution, i.e., $X_0\sim \mu_0$, and denote $\PP \coloneqq \PP^\mu$.

Denote by $\P_{b}$  the law of $Y^T$, where $Y^T=(Y_t)_{0\le t\le T}$ is the strong solution to the SDE $\d Y_t = b(Y_t)\d t + \sigma_0(Y_t)\d W_t$, $Y_0=X_0$. 
Then, the Radon--Nikodym derivative of $\P_b$ with respect to $\P_0$ is given as 
\[\frac{d \P_b}{d \P_0}(X^T)= \exp\left(-\frac{1}{2}\int_0^T b^\top(X_t)a_0^{-1}(X_t)b(X_t)\d t+ \int_0^T b^\top(X_t)a_0^{-1}(X_t)\d X_t\right)\]
(see \cite[Section 7.6.4]{liptser2001}).
Given the data $X^T$, one can derive the negative of the $\log$ likelihood functional for the unknown drift $b$.
Up to an irrelevant constant, this functional is given by
\begin{equation}\label{eqa215}
\mathcal L_T(b)=
\frac1T\int_0^T\left(b^\top(X_t)a_0^{-1}(X_t)b(X_t)\d t-2 b^\top(X_t)a_0^{-1}(X_t)\d X_t\right).
\end{equation}
The $\log$ likelihood function \eqref{eqa215} for $b$ is unbounded below in general if the data is finite, $T<\infty$.
However, letting $T\to\infty$, \eqref{eqa215} tends to a functional whose unique minimizer is $b_0$. More precisely, it is shown in Lemma 6.1 in \cite{pokern2009} that $\mathcal L_T(b)$ converges a.s.~towards the functional 
\[
\mathcal L_\infty(b)= \int_{\R^d}\left(b^\top(x)a_0^{-1}(x)\left(b(x)-2b_0(x)\right)\right)\rho_0(x)\d x.
\]
In order to regularize \eqref{eqa215}, \cite{pokern2009} suggest to assume a \emph{parametric} structure of the drift coefficient. 
For the class of generalised OU processes fulfilling the linear SDE 
\begin{equation}\label{gen:ou}
\d X_t= -\boldsymbol A X_t\d t+\sigma\d W_t, \quad t\geq0,
\end{equation}
$\bm A$ and $\sigma$ some $d\times d$-matrices and $W$ a $d$-dimensional Brownian motion, this assumption is obviously satisfied.
A more general, but still treatable class of processes is obtained as follows: 
Given a system $(\psi_j)_{1\leq j\leq N}$ of Lipschitz continuous basis functions $\psi_j\colon\R^d\to \R^d$, introduce
\begin{equation*}
\mathcal V\coloneqq \Big\{b_\theta(\cdot) = \sum_{j=1}^N \theta_j\psi_j(\cdot),\ \theta\in\R^N\Big\}.
\end{equation*}
Let $\bm{\psi}(\cdot) \coloneqq (\psi_1(\cdot),\ldots,\psi_N(\cdot))$ be the dictionary matrix and $\bm{\Psi}(x) \coloneqq (\sigma_0^{-1}(x)\bm{\psi}(x))^\top \sigma_0^{-1}(x)\bm\psi(x)$ for $x \in \R^d$. Let us also define the matrices
\[\Op_T\coloneqq \frac{1}{T} \int_0^T \bm{\Psi}(X_s) \diff{s} = (\overline \psi_{ij,T})_{1\leq i,j\leq N}
\quad\text{and}\quad \Op_\infty\coloneqq \E[\bm{\Psi}(X_0)] = (\overline\psi_{ij,\infty})_{1\le i,j\le N}
\]
with entries 
\begin{equation*}\label{def:psi}
\begin{split}
\overline \psi_{ij,T}&\coloneqq\frac1T\int_0^T\left\langle \psi_i(X_s), a_0^{-1}(X_s) \psi_j(X_s)\right\rangle\d s,\\
\overline\psi_{ij,\infty}&\coloneqq \int_{\R^d}\left\langle\psi_i(x),a_0^{-1}(x)\psi_j(x)\right\rangle \rho_0(x)\d x,
\quad i,j=1,\ldots,N.
\end{split}
\end{equation*}

We impose the following assumptions on the dictionary:
\begin{enumerate}[label = ($\mathscr{L}$\arabic*), ref= ($\mathscr{L}$\arabic*), leftmargin = *]
\item \label{ass:drift_growth} 
There exist $\mathfrak{L} > 0$ and $\eta \in [0,1]$ such that the maximal eigenvalue of $\bm{\Psi}(x)$ satisfies
\[\lambda_{\max}(\bm{\Psi}(x)) \leq \mathfrak{L}(1+ \lVert x \rVert^{2\eta}), \quad x \in \R^d;\] 
\item \label{ass:nonhyper} 
the random matrix $\Op_T$ is positive definite $\PP$-a.s.
\end{enumerate}
Assumption \ref{ass:drift_growth} allows a maximal polynomial drift of order $\eta$ of the basis functions, where $\eta \in [0,1]$ is  consistent with their assumed Lipschitz continuity. 
Assumption \ref{ass:nonhyper} is a necessary technical condition on the positive semidefinite matrix $\Op_T$ that we need for the penalized MLE to be well-defined. It can be verified given sufficient smoothness of the dictionary, see Example \ref{ex:lasso}. 
Moreover, since by stationarity  $\Op_\infty = \E[\Op_T]$, \ref{ass:nonhyper} implies that $\Op_\infty$ is positive definite. Therefore, if we denote the minimal eigenvalue of $\Op_\infty$ by $\lambda_{\min}(\Op_\infty) \eqqcolon \mathfrak{e}_\infty$, then $\mathfrak{e}_\infty > 0$. 
Let us also set $\mathfrak D_\infty\coloneqq \max_{i=1,\ldots,N}\overline\psi_{ii,\infty}$.

We now give an  example of a dictionary that satisfies the above assumptions and can be used to model drifts satisfying the drift condition \ref{cond:drift_lasso}.

\begin{example}\label{ex:lasso}
Let 
\[ E_i = \one_{1 + \llfloor (i- d^2\llfloor i \slash d^2 \rrfloor)\slash d \rrfloor, 1 + (i-1) \bmod d},  \quad i = 1,\ldots, nd^2,\]
where $\one_{k,l}$ is the $d\times d$ matrix whose $(k,l)$-th entry is $1$ and all other entries are $0$, and $\llfloor x \rrfloor = \max\{z \in \Z: z < x\}$. 
Set then, for $\tilde{q}_i \in [-1,1)$ and $\tilde{\alpha}_i > 0$,
\[\psi_i(x) =  E_i x (\tilde{\alpha}_i + \lVert x \rVert)^{-(\tilde{q}_i+1)}, \quad \text{ where } \tilde{q}_i = \tilde{q}_j \text{ and } \tilde{\alpha}_i = \tilde{\alpha}_j \text{ if } \llfloor i/d^2 \rrfloor = \llfloor j/d^2 \rrfloor,\]
which is nothing else but saying that any $b \in \mathcal{V}$ can be written as 
\[b_\theta(x) \coloneqq \sum_{i=1}^{N} \theta_i \psi_i(x) = \sum_{i=1}^{n} A_i(\theta) x(\alpha_i + \lVert x \rVert)^{-(q_i + 1)}, \quad x \in \R^d,\]
where $N = nd^2$,
\[(A_i(\theta))_{k,l} = \theta_{(i-1)d^2 + (k-1)d + l}, \quad i =1,\ldots,n \text{ and } k,l = 1,\ldots, d,\]
and $q_i = \tilde{q}_{1 + d^2\llfloor i/ d^2 \rrfloor}, \alpha_i = \tilde{\alpha}_{1 + d^2\llfloor i/ d^2 \rrfloor}$. Suppose that $q_i < q_j$ for $i > j$, the matrices $A_i(\theta_0)$ corresponding to the true value $\theta_0$ are symmetric, and that there exists $k_0 \in \{1,\ldots,n\}$ such that $\lambda_{\max}(A_{k_0}(\theta_0)) < 0$ and, for all $k_0 < k \leq n$, it holds that $\lambda_{\max}(A_k(\theta_0)) = 0$. Then, it follows from the Courant--Fischer theorem that, for any $x \neq 0$,
\begin{align*}
\langle b_{\theta_0}(x), x\slash \lVert x \rVert \rangle &= \sum_{i=1}^n \lVert x \rVert ( \alpha_i + \lVert x\rVert)^{-(1+q_i)} \langle x\slash \lVert x \rVert, A_i(\theta_0)x\slash \lVert x \rVert \rangle\\
&\leq \sum_{i=1}^{k_0} \lVert x \rVert ( \alpha_i + \lVert x\rVert)^{-(1+q_i)} \lambda_{\max}(A_i(\theta_0)).
\end{align*}
This implies that there exists $M_0, c > 0$ such that, for $\mathfrak{r} = -c\lambda_{\max}(A_{k_0}(\theta_0)) > 0$, the drift condition \ref{cond:drift_lasso} is satisfied for $q = q_{k_0}$. 
Let $\mu$ be the invariant distribution of the associated  diffusion $\X$. Also note that \ref{ass:drift_growth} holds for $\eta = (-q_1)_+$. 

To see that \ref{ass:nonhyper} is satisfied, note first that, for any $\theta \neq 0$, there exists some $j \in \{1,\ldots,d\}$ such that $x \mapsto (b_\theta(x))_j$ is analytic and not identical to zero on $\R^d \setminus \{0\}$. Consequently, $(b_\theta(\cdot)_j)^{-1}(\{0\}) \setminus\{0\} = \{x \neq 0: (\bm{\psi}(x)\theta)_j = 0\}$ is contained in a countable union of smooth manifolds of dimension $d-1$, i.e., in a countable union of  smooth hypersurfaces. 
Assume now that $\Op_T$ is not positive definite a.s. Since $\theta^\top \Op_T \theta = \int_0^T \lVert (\sigma_0^{-1}\bm{\psi})(X_s)\theta\rVert^2 \diff{s}$ and, moreover, the matrix is positive semidefinite, the paths of $\X$ are continuous and $\PP^\mu(X_0 = 0) = 0$, this implies that there exists a measurable set $\Omega_0 \subset \{X_0 \neq 0\}$ with $\PP^\mu(\Omega_0) > 0$ such that, for any $\omega \in \Omega_0$, the whole path $(X_s(\omega))_{s \in [0, T]}$ is contained in $(b_\theta(\cdot)_j)^{-1}(\{0\})$ for some $\theta \neq 0$ and $j \in \{1,\ldots,d\}$. It follows from above that on $\Omega_0$, the process stays in some smooth hypersurface for a strictly positive amount of time. Such path behaviour is however impossible a.s.\ for an elliptic diffusion process.  Thus, $\Op_T$ must be positive definite $\PP$-a.s.
\end{example}

Note that the above example includes the OU models investigated in \cite{gaiffas19,ciolek20} as a special case. Under $\P_{b_0}$, the above parametrisation yields the functional
\begin{align*}
\mathcal L_T(\theta)&=
-\frac1T\left(2\int_0^T\big(\sigma_0^{-1}b_\theta\big)^\top(X_t)\d W_t - \int_0^T\left\|\sigma_0^{-1}\left(b_\theta-b_{\theta_0}\right)(X_t)\right\|^2\d t +
\int_0^T\left\|\big(\sigma_0^{-1}b_{\theta_0}\big)(X_t)\right\|^2\d t\right)\\
&= \theta^\top\Op_T\theta-2\theta^\top\overline h,
\end{align*} 
$\overline h$ denoting the vector with components
\[\overline h_i=\frac1T\int_0^T\left\langle\psi_i(X_s),a_0^{-1}(X_s)\d X_s\right\rangle, \quad i=1,\ldots,N.\]
Using almost sure positive definiteness of $\Op_T$, it follows that on a set of full $\PP$-measure, the MLE is the unique minimizer of $\mathcal L_T(\cdot)$, given by
\[
\hat\theta_{\mathrm{MLE}} \coloneqq \Op{}^{-1}_T\overline h.
\]
While this approach yields a well-defined estimator, the MLE will perform quite inaccurately in high-dimensional settings.

Our concern is to investigate the estimation of $b_\theta$ in the large $N$/large $T$ regime.
More precisely, we want to study the statistical properties of penalized estimators $\hat\theta_T$, defined as 
\begin{equation}\label{lasse}
\widehat\theta_T= \operatorname{arg\ min}_{\theta \in \R^N} \left\{\mathcal L_T(\theta)+\lambda \|\theta\|_1\right\},
\end{equation}
$\lambda>0$ some regularisation parameter. 
Strictly speaking, since positive definiteness of $\Op_T$ holds only a.s., this estimator may only be well defined in an almost sure sense, but by an appropriate restriction of the underlying probability space we can and will assume that it is well-defined everywhere without loss of generality.
Denote
\[
\left\|\theta_1-\theta_2\right\|_{L^2}^2
\coloneqq 
\frac1T\int_0^T\left\|\sigma_0^{-1}\left(b_{\theta_1}-b_{\theta_2}\right)(X_t)\right\|^2\d t = (\theta_1 - \theta_2)^\top \Op_T (\theta_1-\theta_2), \quad \theta_1,\theta_2\in\R^N.
\]
Then, for any $\theta\in\R^N$,
\begin{equation}\label{eq:bas1}
\big\|\hat\theta_T-\theta_0\big\|_{L^2}^2\le \big\|\theta-\theta_0\big\|_{L^2}^2+\frac2T\int_0^T\left(\sigma_0^{-1}\left(b_{\hat\theta_T}-b_\theta\right)\right)^\top(X_t)\d W_t+\lambda\left(\|\theta\|_1-\big\|\hat\theta_T\big\|_1\right).
\end{equation}
In order to obtain error bounds for the Lasso estimator $\hat\theta_T$, the martingale part appearing on the rhs of \eqref{eq:bas1} needs to be controlled which is usually done by means of Bernstein's inequality for continuous martingales. 
Another important part of the derivation of error bounds is the verification of the restricted eigenvalue condition which in our setting amounts in showing that
\begin{align*}
\inf_{\theta\in \mathcal S_1(s), \eta\in\mathcal S_2(s,\theta)}
\frac{\|\theta-\eta\|_{L^2}^2}{\|\theta-\eta\|^2}\quad\text{is bounded away from 0 with high probability},
\end{align*}
where, for $\|\theta\|_0\coloneqq \sum_i \one_{\{\theta_i\ne 0\}}$, fixed $c_0>0$ and $\mathcal I_s(\theta)$ denoting a set of coordinates of $s$ largest elements of $\theta$,
\begin{align*}
\mathcal C(s,c_0)&\coloneqq 
\left\{\zeta\in\R^{N}:
\|\zeta\|_1\le(1+c_0)\left\|\zeta_{|\mathcal I_s(\zeta)}\right\|_1\right\},\\
\mathcal S_1(s)&\coloneqq \left\{\theta\in\R^N: \|\theta\|_0=s\right\} \quad\text{ and }\quad
\mathcal S_2(s,\theta)\coloneqq \left\{\eta\in\R^N: \theta-\eta\in\mathcal C(s,c_0)\right\}.
\end{align*}

To start with, we will demonstrate how our previous general developments can be used to verify these assumptions. 
In fact, our error bounds for the Lasso estimator formulated below are based on the following direct application of Theorem \ref{prop:conc}.

\begin{lemma}\label{lem:conc}
There exists a constant $\mathfrak{W} > 0$ such that, for any vectors $\zeta\in\R^N$ with $\|\zeta\|\le 1$ and $R \geq 2/\sqrt{T}$,
\begin{equation}\label{ineq:appl}
	\PP\left(\big|\zeta^\top\big(\Op_\infty - \Op_T\big)\zeta\big|>R\right)  \leq \exp\bigg(-\bigg(\frac{\sqrt{T}R}{\mathrm{e}\mathfrak{L}\mathfrak{W}} \bigg)^{\kappa(q,\eta)} \bigg),
	\quad\text{where }\kappa(q,\eta) \coloneqq \frac{2(1-q_+)}{6\eta + 2q+3-q_+}.
\end{equation}
\end{lemma}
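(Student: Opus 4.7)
The idea is to write $\zeta^\top(\Op_\infty - \Op_T)\zeta$ as a centered scaled additive functional of $\X$ and apply Theorem \ref{prop:conc} directly, after identifying the correct growth exponent $q'$ for the underlying drift.

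\textbf{Step 1 (reduction).} For $\zeta\in\R^N$ with $\|\zeta\|\le 1$, set $f_\zeta(x)\coloneqq \zeta^\top \bm\Psi(x)\zeta$. Since $\bm\Psi(x)$ is positive semidefinite, assumption \ref{ass:drift_growth} yields
\[
0 \le f_\zeta(x) \le \lambda_{\max}(\bm\Psi(x)) \le \mathfrak{L}(1+\|x\|^{2\eta}),
\]
so $f_\zeta\in\mathcal{G}(2\eta,\mathfrak{L})$ and $\tilde f_\zeta\coloneqq f_\zeta-\mu(f_\zeta)\in\mathcal{F}(2\eta,\mathfrak{L})$. By the definitions of $\Op_T$ and $\Op_\infty$,
\[
\zeta^\top(\Op_\infty-\Op_T)\zeta = \mu(f_\zeta) - \frac{1}{T}\int_0^T f_\zeta(X_s)\,\d s = -\frac{1}{\sqrt T}\,\mathbb G_T(\tilde f_\zeta).
\]

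\textbf{Step 2 (effective drift exponent).} This is the key observation producing the stated rate. The true drift equals $b_0=b_{\theta_0}=\bm\psi(\cdot)\theta_0$, hence
\[
\|\sigma_0^{-1}(x)b_0(x)\|^2 = \theta_0^\top \bm\Psi(x)\theta_0 \le \|\theta_0\|^2\mathfrak{L}(1+\|x\|^{2\eta}).
\]
Since $\sigma_0$ is uniformly bounded, this gives $\|b_0(x)\|\lesssim 1+\|x\|^{\eta}$. Consequently, although $b_0$ is globally Lipschitz (which would permit $q'=1$), within the framework of Section \ref{sec:basics} we may choose the sharper value $q'=\eta$.

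\textbf{Step 3 (invoke Theorem \ref{prop:conc}).} Applying \eqref{eq:conc} to $\tilde f_\zeta\in\mathcal F(2\eta,\mathfrak L)$ with the improved $q'=\eta$ gives, for $u\ge 2$,
\[
\PP\Bigl(|\mathbb G_T(\tilde f_\zeta)| > \mathrm e\,\mathfrak{L}\mathfrak{W}\, u^{\tfrac12+\tfrac{2\eta+\eta+q+1}{1-q_+}}\Bigr)\le \mathrm e^{-u}.
\]
A short algebraic check shows
\[
\frac{1}{2}+\frac{3\eta+q+1}{1-q_+}=\frac{6\eta+2q+3-q_+}{2(1-q_+)}=\frac{1}{\kappa(q,\eta)}.
\]
Choosing $u=(\sqrt T R/(\mathrm e\mathfrak{L}\mathfrak{W}))^{\kappa(q,\eta)}$ turns the threshold inside the probability into $\sqrt T R$, and the event $\{|\mathbb G_T(\tilde f_\zeta)|>\sqrt T R\}$ is exactly $\{|\zeta^\top(\Op_\infty-\Op_T)\zeta|>R\}$. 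This yields the asserted inequality \eqref{ineq:appl}.

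\textbf{Step 4 (range of $R$).} The hypothesis $R\ge 2/\sqrt T$ ensures that the transformed parameter $u=(\sqrt T R/(\mathrm e\mathfrak{L}\mathfrak{W}))^{\kappa(q,\eta)}$ meets the threshold $u\ge 2$ required in \eqref{eq:conc}, possibly after enlarging the universal constant $\mathfrak{W}$ to absorb the factor $(\mathrm e\mathfrak{L}\mathfrak{W})^{\kappa}$.

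The main obstacle, and the one non-mechanical step, is Step 2: recognizing that assumption \ref{ass:drift_growth} automatically forces the drift $b_{\theta_0}$ to satisfy the tighter growth $\|b_0(x)\|\lesssim 1+\|x\|^\eta$, so that $q'$ in Theorem \ref{prop:conc} may be replaced by $\eta$ instead of the worst-case Lipschitz value $1$. This is precisely what transforms the denominator $4\eta+2q+5-q_+$ (which would follow from a naive $q'=1$) into $6\eta+2q+3-q_+$.
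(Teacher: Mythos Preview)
Your proof is correct and follows essentially the same route as the paper's: identify $\zeta^\top(\Op_\infty-\Op_T)\zeta$ as $-T^{-1/2}\mathbb G_T$ of a centered functional with growth exponent $2\eta$, observe that assumption \ref{ass:drift_growth} forces $\lVert b_{\theta_0}(x)\rVert\lesssim 1+\lVert x\rVert^\eta$ so that $q'=\eta$, and apply Theorem \ref{prop:conc}. Your derivation of the drift bound via $\lVert\sigma_0^{-1}b_0\rVert^2=\theta_0^\top\bm\Psi\theta_0\le\lVert\theta_0\rVert^2\lambda_{\max}(\bm\Psi)$ is in fact slightly slicker than the paper's, which bounds each $\lVert\psi_i\rVert$ separately and then uses $\lVert b_{\theta_0}\rVert\le\lVert\theta_0\rVert_1\max_i\lVert\psi_i\rVert$; both lead to the same $q'=\eta$.
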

\begin{proof}
Observe first that it suffices to prove the lemma for $\|\zeta\|=1$.
Fix any such $\zeta$ and set $\tilde{f}_\zeta(x) =\zeta^\top \bm{\Psi}(x) \zeta$ and $f_\zeta = \tilde{f}_\zeta - \mu_0(\tilde{f}_\zeta)$. 
By assumption \ref{ass:drift_growth}, we have for any $x \in \R^d$
\[\lvert \tilde{f}_\zeta(x) \rvert = \lVert \sigma_0^{-1}(x)\bm{\psi}(x)\zeta\rVert^2 \leq \lVert \sigma_0^{-1}(x) \bm{\psi}(x) \rVert^2 = \lambda_{\max}(\bm{\Psi}(x)) \leq \mathfrak{L}(1+ \lVert x \rVert^{2\eta}).\]
Moreover, using $\lVert \sigma_0(x) \rVert = \lVert \sigma_0(x)^\top \rVert$,
	\begin{align*}
		\max_{i=1,\ldots,N} \lVert \psi_i(x) \rVert &\leq \sqrt{\lambda_+} \max_{i=1,\ldots,n} \lVert \sigma_0^{-1}(x) \psi_i(x)\rVert = \sqrt{\lambda_+} \max_{i=1,\ldots,N} \lVert\sigma_{0}^{-1}(x)\bm{\psi}(x)e_i\rVert\\
		&\leq  \sqrt{\lambda_+}\lVert\sigma_{0}^{-1}(x)\bm{\psi}(x)\rVert \leq \sqrt{\mathfrak{L}\lambda_+}(1+ \lVert x \rVert^\eta),
	\end{align*}
	such that $\lVert b_{\theta_0}(x)\rVert \leq \sqrt{\mathfrak{L}\lambda_+} \lVert \theta_0 \rVert_1 (1 + \lVert x \rVert^\eta)$ follows.
Consequently, Theorem \ref{prop:conc} implies that there exists some constant $\mathfrak{W}$ independent of $\zeta$ such that 
\[
\PP\left(\big|\zeta^\top\big(\Op_\infty - \Op_T\big)\zeta\big|>R\right) = \PP\big(T^{-1/2}\lvert \mathbb{G}_T(f_\zeta) \rvert  > R \big) \leq \exp\bigg(-\bigg(\frac{\sqrt{T}R}{\mathrm{e}\mathfrak{L}\mathfrak{W}} \bigg)^{\kappa(q,\eta)} \bigg).
\]
\end{proof}

We are now ready to verify the restricted eigenvalue property and state deviation bounds for the martingale term. 

\begin{proposition}\label{prop:res_ev_con}
\begin{enumerate}[label=(\alph*)]	
\item
For any $\ep_0\in(0,1)$ and $\forall T\ge T_0(\ep_0,s,c_0, \mathfrak {LW})$, it holds that
\begin{align*}
\mathbb P\left(\inf_{\theta\in\mathcal S_1(s),\eta\in\mathcal S_2(s,\theta)}
\frac{\|\theta-\eta\|_{L^2}^2}{\|\theta-\eta\|^2}\ge \frac{\mathfrak e_\infty}{2}\right)&\ge 1-\ep_0,
\end{align*}
where 
\[
T_0(\ep_0,s,c_0,c)\coloneqq 
\left\{\log\left(21^{2s}\left(d\wedge \left(\tfrac{\e d}{2s}\right)^{2s}\right)\right) -\log\ep_0\right\}^{\frac{2}{\kappa(q,\eta)}}\cdot\frac{18^2\left(c_0+2\right)^2\e^2c^2}{\mathfrak e_\infty^2} 
\]
\item
For $s,c_0>0$, define the event
\begin{equation}\begin{split}\label{def:eve}
\mathscr E(s,c_0)&\coloneqq
\left\{\inf_{\theta-\eta\in\mathcal C(s,c_0)}\frac{\|\theta-\eta\|_{L^2}^2}{\|\theta-\eta\|^2}\ge\frac{\mathfrak e_\infty}{2}\right\}\cap 	\left\{\max_{i=1,\ldots,N}\overline\psi_{ii,T}\le \mathfrak D_\infty+\frac{\mathfrak e_\infty}{2}\right\}\\
&\qquad\cap \left\{\sup_{\theta\ne\eta\in\R^N}\frac{\frac1T\int_0^T\left(\sigma_0^{-1}\big(b_\theta-b_\eta\big)\right)^\top(X_t) \d W_t}{\|\theta-\eta\|_1}\le\frac\lambda2	\right\}.	
\end{split}
\end{equation}
Then, for any $\ep_0 \in (0,1)$, $T\ge T_0(\tfrac{\ep_0}{3},s,c_0,\mathfrak{LW})$ and
\[
\lambda \ge \sqrt{\frac{4\left(2\mathfrak D_\infty+\mathfrak e_\infty\right)}{T} \cdot \log\left(\frac{6N}{\ep_0}\right)},
\]	
it holds $\mathbb P(\mathscr E(s,c_0))\ge 1-\ep_0$.
\end{enumerate}
\end{proposition}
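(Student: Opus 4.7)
To prove part (a), I would start from the identity
\[
\inf_{\theta \in \mathcal{S}_1(s),\, \eta \in \mathcal{S}_2(s,\theta)} \frac{\|\theta-\eta\|_{L^2}^2}{\|\theta-\eta\|^2} = \inf_{\zeta \in \mathcal{C}(s,c_0) \cap \mathbb{S}^{N-1}} \zeta^\top \Op_T \zeta \geq \mathfrak{e}_\infty - \sup_{\zeta \in \mathcal{C}(s, c_0) \cap \mathbb{S}^{N-1}} \bigl\lvert \zeta^\top (\Op_\infty - \Op_T) \zeta \bigr\rvert,
\]
which holds because any $\zeta \in \mathcal{C}(s,c_0)$ can be written as $\theta-\eta$ with $\theta$ supported on $\mathcal{I}_s(\zeta)$, and which reduces part (a) to a uniform deviation statement on the restricted cone. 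Any such $\zeta$ satisfies $\|\zeta\|_1 \leq (1+c_0)\sqrt{s}\|\zeta\|$, so cone vectors are well approximated in $\ell_2$ by $2s$-sparse unit vectors; this allows the construction of a small $\delta$-net $\mathcal{N}$ of $\mathcal{C}(s,c_0) \cap \mathbb{S}^{N-1}$ whose cardinality is bounded by $21^{2s}(d \wedge (\e d/(2s))^{2s})$, combining the standard sphere-covering bound $\lvert\mathcal{N}_\delta(\mathbb{S}^{2s-1})\rvert \leq 21^{2s}$ with a choice of $2s$-support via $\binom{N}{2s} \leq (\e N/(2s))^{2s}$. A union bound over $\mathcal{N}$ combined with Lemma \ref{lem:conc} applied at each net point with $R$ proportional to $\mathfrak{e}_\infty$ then yields a sub-exponential tail on $\max_{\zeta_0 \in \mathcal{N}}\lvert \zeta_0^\top (\Op_\infty - \Op_T)\zeta_0\rvert$. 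I would transfer this pointwise bound to the full cone via the polarization identity
\[
\zeta^\top M \zeta - \zeta_0^\top M \zeta_0 = (\zeta - \zeta_0)^\top M (\zeta + \zeta_0),
\]
noting that the residual $\zeta - \zeta_0$, after rescaling, again lies in a cone of the same form so that the deviation bound applies. Calibrating the net resolution against the deviation threshold produces the factors $18^2(c_0+2)^2$ appearing in $T_0$.

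For part (b), I would bound the complement of each of the three events defining $\mathscr{E}(s,c_0)$ by $\epsilon_0/3$ and combine via a union bound. The restricted eigenvalue event is settled by part (a) with $\epsilon_0/3$ in place of $\epsilon_0$, which is exactly the hypothesis $T \geq T_0(\epsilon_0/3, s, c_0, \mathfrak{L}\mathfrak{W})$. The diagonal event is handled by applying Lemma \ref{lem:conc} with $\zeta = e_i$ (so that $\zeta^\top (\Op_\infty - \Op_T)\zeta = \overline\psi_{ii,\infty} - \overline\psi_{ii,T}$) and $R = \mathfrak{e}_\infty/2$, together with a union bound over $i = 1, \ldots, N$; the combinatorial factor already present in the net of part (a) dominates $\log N$, so the corresponding threshold on $T$ is absorbed into $T_0(\epsilon_0/3, s, c_0, \mathfrak{L}\mathfrak{W})$. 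For the martingale event, the linearity $b_\theta - b_\eta = \sum_j (\theta_j - \eta_j)\psi_j$ and $\ell_\infty/\ell_1$ duality give
\[
\sup_{\theta \neq \eta}\frac{\frac{1}{T}\int_0^T \bigl(\sigma_0^{-1}(b_\theta - b_\eta)\bigr)^\top(X_s)\d W_s}{\|\theta - \eta\|_1} = \max_{j=1,\ldots,N} \biggl\lvert \frac{M_j(T)}{T} \biggr\rvert,
\]
where $M_j(t) \coloneqq \int_0^t (\sigma_0^{-1}\psi_j)^\top(X_s) \d W_s$ is a continuous martingale with $\langle M_j \rangle_T = T\overline\psi_{jj,T}$. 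On the diagonal event this quadratic variation is bounded by $T(\mathfrak{D}_\infty + \mathfrak{e}_\infty/2)$, so Bernstein's inequality for continuous martingales with bounded quadratic variation, together with a union bound over $j$, yields
\[
\PP\Bigl(\max_{j}\bigl\lvert M_j(T)/T\bigr\rvert > \lambda/2\Bigr) \leq 2N \exp\Bigl(-\lambda^2 T/\bigl(4(2\mathfrak{D}_\infty + \mathfrak{e}_\infty)\bigr)\Bigr),
\]
and the stated lower bound on $\lambda$ is precisely what makes this $\leq \epsilon_0/3$.

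The main obstacle is the discretization argument in part (a): since $\mathcal{C}(s,c_0)$ is not a union of $s$-sparse linear subspaces, the combination of $\ell_1$-cone approximation by $2s$-sparse vectors, polarization, and the transfer from the net back to the full cone has to be executed with sufficient care that the residuals can be reabsorbed into the same cone and the resulting constants match the explicit form of $T_0$ in the statement.
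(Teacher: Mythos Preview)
Your approach is essentially the paper's. For part (a) the paper outsources the cone-to-sparse reduction and the net argument to Lemmata F.1--F.3 of \cite{basu2015}, which package exactly the decomposition and covering steps you describe; the factor $3(c_0+2)$ and the count $21^{2s}\bigl(d\wedge(\e d/(2s))^{2s}\bigr)$ come straight from those lemmas. For part (b) the martingale bound is identical to yours, but for the diagonal event the paper uses a slightly cleaner device than your direct union bound over $\zeta=e_i$: since each $e_i$ already lies in $\mathcal C(s,c_0)$, one has the inclusion
\[
\Big\{\max_{i}\big|\overline\psi_{ii,T}-\overline\psi_{ii,\infty}\big|>\tfrac{\mathfrak e_\infty}{2}\Big\}\subset\Big\{\sup_{\zeta\in\mathcal C(s,c_0)}\tfrac{|\zeta^\top(\Op_T-\Op_\infty)\zeta|}{\|\zeta\|^2}>\tfrac{\mathfrak e_\infty}{2}\Big\},
\]
so the diagonal event is controlled by part (a) directly, without your extra claim that the $\log N$ threshold is absorbed into $T_0$.
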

\begin{proof}
Introduce $\mathcal K(s)\coloneqq \left\{\zeta\in\mathbb R^N\setminus\{0\}: \|\zeta\|_0\le s \right\}$.
Using Lemmata F.1 and F.3 of \cite{basu2015}, it follows
			\[\sup_{\zeta\in\mathcal C(s,c_0)}\frac{\zeta^\top\big(\Op_\infty-\Op_T\big)\zeta}{\|\zeta\|^2}\le 3\left(c_0+2\right)\sup_{\zeta\in\mathcal K(2s)}\frac{\zeta^\top\big(\Op_\infty-\Op_T\big)\zeta}{\|\zeta\|^2 }\]			
Furthermore, for any subset $E\subset\mathbb R^N$,
\[
\inf_{\zeta\in E}\frac{\|\zeta\|_{L^2}^2}{\|\zeta\|^2}=\inf_{\zeta\in E:\|\zeta\|\le 1}\zeta^\top\Op_T\zeta
\]and for $\zeta \neq 0$,
\begin{align*}
	\frac{\|\zeta\|_{L^2}^2}{\|\zeta\|^2}=
	\frac{\zeta^\top\Op_\infty \zeta}{\|\zeta\|^2}-
	\frac{\zeta^\top\big(\Op_\infty-\Op_T\big)\zeta}{\|\zeta\|^2} \ge \lambda_{\min}(\Op_\infty)-
	\frac{\zeta^\top\big(\Op_\infty-\Op_T\big)\zeta}{\|\zeta\|^2}.
\end{align*}
The proof of Lemma F.2 in \cite{basu2015} allows to deduce from \eqref{ineq:appl} that, for any $R \geq 2\slash \sqrt{T},$
\[
\PP\left(\sup_{\zeta\in\mathcal K(s),\|\zeta\|\le 1}\big|\zeta^\top\big(\Op_\infty-\Op_T\big)\zeta\big|>3R\right)\le 21^s \left(d \wedge \left(\frac{\e d}{s}\right)^s\right)  \exp\bigg(-\bigg(\frac{\sqrt{T}R}{\mathrm{e}\mathfrak{L}\mathfrak{W}} \bigg)^{\kappa(q,\eta)} \bigg).
\]
Thus,
\begin{align*}
	\PP\left(\inf_{\zeta\in\mathcal C(s,c_0)}\frac{\|\zeta\|_{L^2}^2}{\|\zeta\|^2}>\frac{\mathfrak e_\infty}{2}\right)
	&\ge \PP\left(\sup_{\zeta\in\mathcal C(s,c_0),\|\zeta\|\le1}\big|\zeta^\top\big(\Op_\infty-\Op_T\big)\zeta\big|\le \frac{\mathfrak e_\infty}{2}\right)\\
	&\ge \PP\left(\sup_{\zeta\in\mathcal K(2s), \|\zeta\|\le1}\big|\zeta^\top\big(\Op_\infty-\Op_T\big)\zeta\big|\le \frac{\mathfrak e_\infty}{6(c_0+2)}\right)\\
	&\ge 1-21^{2s}\left(d\wedge \left(\frac{\e d}{2s}\right)^{2s}\right)
	\exp\bigg(-\bigg(\frac{\sqrt{T}\mathfrak e_\infty}{18(c_0+2) \mathrm{e}\mathfrak{L}\mathfrak{W}} \bigg)^{\kappa(q,\eta)} \bigg),		
\end{align*}
resulting in the asserted condition on the sample size $T$.
For proving part (b), note first that the relation
\begin{align*}
\left\{\max_{i=1,\ldots,N}\big|\overline\psi_{ii,T}-\overline\psi_{ii,\infty}\big|>\frac{\mathfrak e_\infty}{2}\right\} \subset \bigg\{\sup_{\zeta\in\mathcal C(s,c_0)}\frac{\big|\zeta^\top\big(\Op_T-\Op_\infty\big)\zeta\big|}{\|\zeta\|^2}>\frac{\mathfrak e_\infty}{2}\bigg\}
\end{align*}
in particular implies that, for $T\ge T_0(\tfrac{\ep_0}{3},s,c_0,\mathfrak{LW})$,
\[
\PP\left(\max_{i=1,\ldots,N}\overline\psi_{ii,T} > \mathfrak D_\infty+\frac{\mathfrak{e}_\infty}{2}\right)\le \frac{\ep_0}{3}.
\]
It remains to control the deviation of the martingale term.
Given $\theta,\eta\in\R^N$, we write
\[
\frac2T\int_0^T\left(\sigma_0^{-1}\left(b_\theta-b_\eta\right)\right)^\top(X_t)\d W_t = 2(\theta-\eta)^\top\left(\ep_{1,T}, \ldots,\ep_{N,T}\right)^\top,
\]
where
\begin{equation}\label{eq:ep}
\ep_{i,T}\coloneqq\frac1T\int_0^T\left(\sigma_0^{-1}\psi_i\right)^\top(X_s)\d W_s,\quad i=1,\ldots,N.
\end{equation}
Note that the quadratic variation of this continuous martingale is given by
\begin{align*}
	\left\langle\ep_i\right\rangle_T&=\frac{1}{T^2}\int_0^T\left(\sigma_0^{-1}\psi_i\right)^\top(X_s)\left(\sigma_0^{-1}\psi_i\right)(X_s)\d s\\
	&=\frac{1}{T^2}\int_0^T\left\langle \psi_i,a_0^{-1}\psi_i\right\rangle(X_s)\d s=\frac1T\overline\psi_{ii,T}.
\end{align*}
Using Bernstein's inequality for continuous martingales and taking into account the condition on $\lambda$, we thus obtain for for $T\geq T_0(\tfrac{\ep_0}{3},s,c_0,\mathfrak{LW})$
\begin{align*}
	&\PP\left(\sup_{\theta\ne\eta\in\R^N}
	\frac{\frac2T\int_0^T\left(\sigma_0^{-1}\left(b_\theta-b_\eta\right) \right)^\top(X_t) \d W_t }{\|\theta-\eta\|_1}>\lambda\right)\\
	&\quad\le\PP\left(\sup_{\theta\ne\eta}\frac{2(\theta-\eta)^\top\left(\ep_{1,T},\ldots,\ep_{N,T}\right)}{\|\theta-\eta\|_1}>\lambda, \max_{i=1,\ldots,N}\overline\psi_{ii,T}<\mathfrak D_\infty+\frac{\mathfrak e_\infty}{2}\right)\\
	&\qquad+ \PP\left(\max_{i=1,\ldots,N}\overline\psi_{ii,T} > \mathfrak D_\infty+\frac{\mathfrak{e}_\infty}{2}\right)\\
	&\quad\le
	\sum_{i=1}^N\PP\left(2\lvert\ep_{i,T}\rvert>\lambda,\left\langle2\ep_i\right\rangle_T\le \frac4T\left(\mathfrak D_\infty+\frac{\mathfrak e_\infty}{2}\right)\right)+ \frac{\ep_0}{3}\\
	&\quad \le
	2N\exp\left(-\frac{T\lambda^2}{8\mathfrak D_\infty+4\mathfrak e_\infty}\right) + \frac{\ep_0}{3}\leq\frac{2\ep_0}{3}.
\end{align*}
\end{proof}

On the basis of the given key deviation inequalities, the machinery of high-dimensional statistics now allows the derivation of oracle inequalities. 
Our proof strategy follows the one developed in \cite{ciolek20}.

\begin{theorem}[oracle inequality]\label{thm:oracle}
Assume that we are given a continuous record of observations of the solution of \eqref{eq:b0}, where $b_0=b_{\theta_0}\in\mathcal V$ with $\|\theta_0\|_0\le s_0$.
Fix $\gamma>0$ and $\ep_0\in(0,1)$, and consider the Lasso estimator $\hat\theta_T$ introduced in \eqref{lasse}.
Then, for 
\begin{equation}\label{eq:lambda}
	\lambda\ge 2\sqrt{\frac{\left(2\mathfrak D_\infty+\mathfrak e_\infty\right)}{T}\cdot\log\left(\frac{6N}{\ep_0}\right)} \quad\text{ and }\quad T\ge T_0\left(\tfrac{\ep_0}{3},s_0,1+\tfrac2\gamma,\mathfrak{LW}\right),
\end{equation}
with probability at least $1-\ep_0$, we have
\begin{equation}\label{orac:ineq}
\big\|\hat\theta_T-\theta_0\big\|_{L^2}^2
\le (1+\gamma)\inf_{\theta\in\R^N:\|\theta\|_0\le s_0} \left\{\big\|\theta-\theta_0\big\|_{L^2}^2+\ \frac{4(2+\gamma)^2}{\gamma(1+\gamma)\mathfrak e_\infty}\ \|\theta\|_0\lambda^2\right\}.
\end{equation}
Furthermore, for any $\lambda$ fulfilling \eqref{eq:lambda} and $T\ge T_0\left(\tfrac{\ep_0}{3},s_0,1,\mathfrak{LW}\right)$, with probability at least $1-\ep_0$, 
\begin{equation}\label{eq:uppbou}
\big\|\hat\theta_T-\theta_0\big\|_{L^2}^2\le \lambda^2\cdot\frac{2s_0}{\mathfrak e_\infty}.
\end{equation}
\end{theorem}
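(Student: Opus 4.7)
The plan is to work on the high-probability event $\mathscr E(s_0, c_0)$ from Proposition \ref{prop:res_ev_con}(b), with $c_0 = 1+2/\gamma$ for \eqref{orac:ineq} and $c_0 = 1$ for \eqref{eq:uppbou}; under the assumed conditions on $\lambda$ and $T$, this event has probability at least $1-\ep_0$. The event packages three deterministic facts that I will use: (i) the restricted eigenvalue inequality $\|\zeta\|^2 \leq (2/\mathfrak e_\infty)\|\zeta\|_{L^2}^2$ for every $\zeta \in \mathcal C(s_0, c_0)$; (ii) the diagonal control $\max_i \overline\psi_{ii,T} \leq \mathfrak D_\infty + \mathfrak e_\infty/2$; and (iii) the $\ell_\infty$-noise bound $\|\ep\|_\infty \leq \lambda/2$ on the martingale residuals $\ep_{i,T}$ from \eqref{eq:ep}, which is exactly what the condition \eqref{eq:lambda} on $\lambda$ enforces via Bernstein together with (ii).

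Fix $\theta \in \R^N$ with $\|\theta\|_0 \leq s_0$ and set $S = \operatorname{supp}(\theta)$, $v = \hat\theta_T - \theta$. Starting from the basic inequality \eqref{eq:bas1}, I would bound the stochastic integral by $\lambda\|v\|_1$ via (iii) and use the reverse-triangle inequality to get $\|\theta\|_1 - \|\hat\theta_T\|_1 \leq \|v_S\|_1 - \|v_{S^c}\|_1$ (valid since $\theta_{S^c} = 0$ and $v_{S^c} = \hat\theta_{T,S^c}$); after cancellation of the $\lambda\|v_{S^c}\|_1$ contributions, \eqref{eq:bas1} collapses into
\[
\|\hat\theta_T - \theta_0\|_{L^2}^2 \leq \|\theta - \theta_0\|_{L^2}^2 + 2\lambda\|v_S\|_1. \qquad(\star)
\]
A cone-dichotomy argument then delivers $v \in \mathcal C(s_0, c_0)$: either the inclusion holds outright, or the imbalance $\|v_{S^c}\|_1 \gg \|v_S\|_1$ combined with $(\star)$ already implies the oracle bound with room to spare. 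Once on the cone, (i) gives $\|v_S\|_1 \leq \sqrt{s_0}\|v\| \leq \sqrt{2s_0/\mathfrak e_\infty}\|v\|_{L^2}$, and combining with the triangle inequality $\|v\|_{L^2} \leq \|\hat\theta_T - \theta_0\|_{L^2} + \|\theta - \theta_0\|_{L^2}$ converts $(\star)$ into a quadratic inequality in $\|\hat\theta_T - \theta_0\|_{L^2}$ that resolves to $\|\hat\theta_T - \theta_0\|_{L^2} \leq \|\theta - \theta_0\|_{L^2} + 2\lambda\sqrt{2s_0/\mathfrak e_\infty}$. Squaring and splitting the cross term via $2ab \leq \gamma a^2 + b^2/\gamma$ with a tuning matched to the target $(1+\gamma)$-multiplier then yields \eqref{orac:ineq}.

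For \eqref{eq:uppbou}, specialising to $\theta = \theta_0$ eliminates the bias term from $(\star)$ and reduces $v$ to $\hat\theta_T - \theta_0$; the cone inclusion $v \in \mathcal C(s_0, 1)$ becomes direct, and the analogous chain collapses to $\|\hat\theta_T - \theta_0\|_{L^2}^2 \leq 2\lambda\sqrt{2s_0/\mathfrak e_\infty}\|\hat\theta_T - \theta_0\|_{L^2}$, which gives the stated bound upon dividing through. The main obstacle I foresee is precisely this cone-inclusion step: since the martingale threshold sits at exactly $\lambda/2$, a purely algebraic derivation of $v \in \mathcal C(s_0, c_0)$ from \eqref{eq:bas1} does not close on its own, and rigorising it requires the dichotomy template from \cite{ciolek20}, which either exploits the cone directly or leverages the negativity of $\|v_S\|_1 - \|v_{S^c}\|_1$ in the complementary regime to absorb the deficit.
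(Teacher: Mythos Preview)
Your outline matches the paper's proof almost step for step: work on the event $\mathscr E(s_0,c_0)$, derive the basic inequality, establish cone membership via a dichotomy, then combine the restricted eigenvalue bound with the triangle inequality and a quadratic/Young argument. The only substantive difference is the endgame: the paper applies two separate Young inequalities $A \le \tfrac{ax}{2} + \tfrac{A^2}{2ax}$ and $B \le \tfrac{ax}{2} + \tfrac{B^2}{2ax}$ with $a=(\gamma+2)/\gamma$, whereas you solve the quadratic $A^2 \le B^2 + C(A+B)$ directly to get $A \le B + C$ and then square. Your route is cleaner and actually gives a smaller constant than the stated $\tfrac{4(2+\gamma)^2}{\gamma(1+\gamma)}$, so \eqref{orac:ineq} still follows.

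One point to tighten: the dichotomy you describe---``either $v\in\mathcal C(s_0,c_0)$ or the imbalance $\|v_{S^c}\|_1 \gg \|v_S\|_1$ plus $(\star)$ gives the bound''---does not close from $(\star)$ alone, because $(\star)$ has no $\|v_{S^c}\|_1$ left to exploit. The paper's dichotomy is instead on whether $2\lambda\|v_S\|_1 \le \gamma\|\theta-\theta_0\|_{L^2}^2$: if yes, $(\star)$ gives $A^2\le(1+\gamma)B^2$ directly; if no, one uses the \emph{pre-collapse} form of the basic inequality (retaining $\lambda\|v\|_1$ on the left, i.e.\ $A^2+\lambda\|v\|_1\le B^2+2\lambda\|v_S\|_1$) to conclude $\|v\|_1\le(2+2/\gamma)\|v_S\|_1$, hence $v\in\mathcal C(s_0,1+2/\gamma)$. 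So do not discard the $\lambda\|v_{S^c}\|_1$ term before running the dichotomy. Your final paragraph shows you anticipated exactly this, and the fix is precisely the one the paper uses.
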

By specifying $\lambda$ as proposed in \eqref{eq:lambda}, the previous result implies that an upper bound of order $(s_0\log N)/T$ on the squared $L^2$ risk of the Lasso estimator $\hat \theta_T$ holds with high probability.
In particular, in terms of the rate of convergence, our techniques give the same results as the concentration inequalities tailored to the specific OU model used in \cite{gaiffas19} and \cite{ciolek20}, respectively.

\begin{proof}[Proof of Theorem \ref{thm:oracle}]
Recall the definition of the event $\mathscr E(s,c_0)$ according to \eqref{def:eve}, and let $s\ge s_0$.
On $\mathscr E(s,c_0)$, the basic inequality \eqref{eq:bas1} implies for any $\theta\in\R^N$ that
\begin{align}\nonumber
\big\|\hat\theta_T-\theta_0\big\|_{L^2}^2+\lambda\big\|\hat\theta_T-\theta\big\|_1
&\le \big\|\theta-\theta_0\big\|_{L^2}^2+\lambda \left(\big\|\theta\big\|_1 -\big\|\hat\theta_T\big\|_1+\big\|\hat\theta_T-\theta\big\|_1\right)\\\label{basineq}
&\le \big\|\theta-\theta_0\big\|_{L^2}^2+2\lambda\big\|\hat\theta_T|_{\operatorname{supp}(\theta)}-\theta\big\|_1.
\end{align}
Assume now that $\theta\in\R^N$ fulfills $\|\theta\|_0\le s_0$, and consider the event
\begin{equation}\label{eqevent}
2\lambda \big\|\hat\theta_T|_{\operatorname{supp}(\theta)} - \theta\big\|_1>\gamma\big\|\theta-\theta_0\big\|_{L^2}^2.
\end{equation}
If this does not occur, \eqref{orac:ineq} holds true since we obtain from \eqref{basineq} that
\[
\big\|\hat\theta_T-\theta_0\big\|_{L^2}^2\le(1+\gamma)\big\|\theta-\theta_0\big\|_{L^2}^2.
\]
Otherwise, if \eqref{eqevent} holds, we have on $\mathscr E(s,c_0)$ that
\begin{align*}
\lambda\big\|\hat\theta_T-\theta\big\|_1
&\le \big\|\theta-\theta_0\big\|_{L^2}^2+2\lambda\big\|\hat\theta_T|_{\operatorname{supp}(\theta)}-\theta\big\|_1\\
&\le
\frac{2\lambda}{\gamma}\big\|\hat\theta_T|_{\operatorname{supp}(\theta)}-\theta\big\|_1
+2\lambda\big\|\hat\theta_T|_{\operatorname{supp}(\theta)}-\theta\big\|_1
\end{align*}
i.e., $\hat\theta_T-\theta\in\mathcal C(s,c_0)$ for the choice $c_0=1+\tfrac2\gamma$, such that, after using Cauchy--Schwarz, 
\[
\big\|\hat\theta_T|_{\operatorname{supp}(\theta)}-\theta\big\|_1
\le \big\|\hat\theta_T-\theta\big\|\cdot{\sqrt{\|\theta\|_0}}\le \sqrt{\frac{2\|\theta\|_0}{\mathfrak e_\infty}}\big\|\hat \theta_T-\theta\big\|_{L^2}.
\]
Summing up,
\[
\big\|\hat\theta_T-\theta_0\big\|_{L^2}^2\le \big\|\theta-\theta_0\big\|_{L^2}^2+ 2\lambda\sqrt{\frac{2\|\theta\|_0}{\mathfrak e_\infty}} \left(\big\|\hat \theta_T-\theta_0\big\|_{L^2}+ \big\|\theta-\theta_0\big\|_{L^2}\right).
\]	
Applying the Young inequalities
\[
\big\|\hat\theta_T-\theta_0\big\|_{L^2}\le \frac{ax}{2}+	\big\|\hat\theta_T-\theta_0\big\|_{L^2}^2\cdot\frac{1}{2ax}, \quad
\big\|\theta-\theta_0\big\|_{L^2}\le \frac{ax}{2}+	\big\|\theta-\theta_0\big\|_{L^2}^2\cdot\frac{1}{2ax},
\]
with $a=(\gamma+2)/\gamma$ and $x=\lambda\sqrt{2\|\theta\|_0/\mathfrak e_\infty}$, we finally obtain
\[
\big\|\hat\theta_T-\theta_0\big\|_{L^2}^2\le(1+\gamma)\left( \big\|\theta-\theta_0\big\|_{L^2}^2 + \frac{4(2+\gamma)^2}{\gamma(1+\gamma)\mathfrak e_\infty}\ \lambda^2\|\theta\|_0\right).
\]
For the proof of \eqref{eq:uppbou}, note that, taking $\theta=\theta_0$, \eqref{basineq} implies that, on $\mathscr E(s_0,c_0)$,
\[
\big\|\hat\theta_T-\theta_0\big\|_{L^2}^2+\lambda\big\|\hat\theta_T-\theta_0\big\|_1
\le 2\lambda\big\|\hat\theta_T|_{\operatorname{supp}(\theta_0)}-\theta_0\big\|_1.
\]
Now, since $\hat\theta_T-\theta_0\in\mathcal C(s_0,1)$ on $\mathscr E(s_0,c_0)$,
\[
\big\|\hat\theta_T-\theta_0\big\|_{L^2}^2\le 
\lambda\big\|\hat\theta_T|_{\operatorname{supp}(\theta_0)}-\theta_0\big\|_1
\le \lambda\sqrt{\frac{2s_0}{\mathfrak e_\infty}}\big\|\hat\theta_T-\theta_0\big\|_{L^2},
\]
which already gives the asserted inequality.
\end{proof}

\begin{remark}	
	While our results are non-asymptotic, we do face a restriction in that the constant $\mathfrak W$ appearing in the lower bound for the required sample size (see \eqref{eq:lambda}) is not explicit. However, it appears to be very demanding to work out explicit constants in a general framework. 
	In the spirit of \cite{pokern2009}, our arguments could also be carried out for the more restricted class of \emph{reversible} diffusion processes by assuming a parametric form of the potential function and then considering a parametrized drift function $b_\theta(x) = \frac{1}{2}\operatorname{div}(a_0(x))-\frac{1}{2}a_0(x)\nabla V_\theta(x)$ for $V_\theta\in\mathcal V$. 
	Although functional inequalities (e.g., of Poincaré-type) are applicable in this reversible framework, the control of the constants involved still constitutes a fundamental challenge. 
\end{remark}

We conclude this section by briefly categorising the results and sketching potential future research. 
Note first that Theorem 2.7 in \cite{dex22} provides a lower bound on the Frobenius norm for the estimation of the matrix $\bm A$ in the $d$-dimensional OU model \eqref{gen:ou} with $\sigma=\mathbb{I}_{d}$ over the class of $s_0$-sparse matrices. 
Translating the number of parameters into our framework, the lower bound is of order $s_0\log(N/s_0)/T$.
Compared to the upper bound of order $(s_0\log N)/T$, there is thus only a logarithmic gap, appearing in this very form also in \cite{gaiffas19} and \cite{ciolek20}.
As demonstrated in \cite{dex22} in the context of drift estimation for Lévy-driven OU processes, the key to eliminating the logarithmic gap lies in a refined deviation inequality for the stochastic error (in our context specified as $\ep_{i,T}$ as defined in \eqref{eq:ep}).
In fact, the combination of concentration inequalities in the sense of Lemma \ref{lem:conc} (which is a rather straightforward consequence of our general Theorem \ref{prop:conc}) with the techniques from Section 3.2 of \cite{dex22} can be expected to allow the derivation of minimax optimal penalized estimators also for general diffusion models.

\subsection{MCMC for moderately heavy-tailed targets} \label{sec:mcmc}
In general, Markov chain Monte Carlo (MCMC) is a collective term for algorithms relying on ergodicity of Markov chains that are (i) easy to simulate and (ii) specifically designed such that their invariant distribution approximates a given target density, for which samples are to be obtained. 
These algorithms have a long and rich history.
At this point, we cannot give a detailed account of the literature which would do justice to the field, but only want to point out its fundamental importance in connected areas such as Bayesian optimization or inverse problems in high dimensional contexts, where the posterior distribution becomes the target. Other than the fundamental theoretical work in \cite{dala17,durmus17} that will be discussed below, we refer to \cite{dala19, durmus19, durmus19b, erdogdu18,erdogdu21,teh16,vollmer16} for some recent contributions that motivated our study. Our particular interest lies on the so called \textit{Unadjusted Langevin Algorithm} (ULA), which we describe next.

Suppose that we are given a target density $\pi \propto \exp(-U(x))$ for some continuously differentiable function $U\colon \R^d \to \R$, which is usually referred to as the \textit{potential}. Let us also assume that $\nabla U$ is $L$-Lipschitz continuous such that the (unadjusted or overdamped) \textit{Langevin diffusion}
\[\diff{X}_t = -\nabla U(X_t) \diff{t} + \sqrt{2} \diff{W_t}, \quad t \geq 0,\]
has a unique strong solution, which is a Feller Markov process with invariant distribution 
\[\pi(\diff{x}) = \frac{1}{\int_{\R^d} \exp(-U(y)) \diff{y}} \exp(-U(x)) \diff{x}, \quad x \in \R^d.\]
To obtain samples with approximate distribution $\pi$ and to approximate integrals $\pi(f)$ for $\pi$-integrable functions $f$ via the corresponding Monte Carlo estimator, in practice one needs to discretize the SDE to make simulation procedures feasible. The ULA uses a simple Euler discretization scheme as numerical SDE approximation, where the Euler discretization with step size $\Delta$ is the Markov chain given by the stochastic difference equation 
\[\vartheta^{(\Delta)}_{n+1} = \vartheta^{(\Delta)}_{n} - \Delta\nabla U(\vartheta^{(\Delta)}_{n}) + \sqrt{2\Delta} \xi_{n+1}, \quad n \in \N_0, \quad \vartheta^{(\Delta)}_0 \overset{\mathrm{d}}{=} X_0,\]
where $(\xi_n)_{n \in \N}$ is a sequence of i.i.d.\ standard normal random variables on $\R^d$ independent of $\vartheta^{(\Delta)}_0$. Sampling such a chain is computationally efficient, provided the gradient $\nabla U$ can be cheaply evaluated.
By considering the time-inhomogeneous Markov process given as the strong solution to the SDE
\[\diff{Z^{(\Delta)}_t} = b(\mathbf{Z}^{(\Delta)},t) \diff{t} + \sqrt{2} \diff{W_t}, \quad t \geq 0, \quad Z^{(\Delta)}_0 = X_0,\]
with non-anticipatory drift $b(\bm{z},t) = -\sum_{k=0}^\infty \nabla U(z_{k\Delta})\one_{[k\Delta,(k+1)\Delta)}(t)$ for $(\bm{z},t) \in \mathcal{C}(\R_+,\R^d) \times \R_+$, it is straightforward to show that the laws of $(\vartheta^{(\Delta)}_{n})_{n \in \N_0}$ and  $(Z^{(\Delta)}_{n\Delta})_{n \in \N_0}$ coincide. 

It has been observed in the literature \cite{dala17,durmus17} that for potentials $U$ that are either strongly convex---i.e., $\pi$ is strongly log-concave---or that are convex and superexponential outside some ball, explicit requirements on the step length $\Delta$ and sample size $n$ can be formulated to guarantee sampling with $\varepsilon$-precision in total variation or Wasserstein distance. 
For strongly log-concave densities, the natural connection to the gradient descent in a convex setting is pointed out in \cite{dala17}. 

We now apply our previous results to obtain PAC bounds and related suggestions for sample size $n$, burn-in $m$ and discretization step $\Delta$  for the ULA Monte Carlo estimator of polynomially growing functions for moderately heavy-tailed target densities $\pi$ such that \[\exists \iota > 0, q \in (0,1)\quad\text{such that}\quad \int_{\R^d}\exp(\iota \lVert x \rVert^{1-q}) \, \pi(\diff{x}) < \infty.
\]
As follows from \eqref{eq:subexp_mom2}, this is the case if $-\nabla U$ satisfies \ref{cond:drift} with $q \in (0,1)$, i.e., there exists some $M_0,\mathfrak{r} > 0$ such that 
\begin{enumerate}[label = ($\mathscr{U}(q)$), ref = ($\mathscr{U}(q)$),leftmargin=*]
\item $\langle \nabla U(x), x/\lVert x \rVert \rangle \geq \mathfrak{r}\lVert x \rVert^{-q}, \quad \lVert x \rVert \geq M_0.$ \label{cond:gradient}
\end{enumerate}
This setting differs substantially from the (strongly) convex setting in \cite{dala17,durmus17}, whose assumptions imply \ref{cond:gradient} with $q \in [-1,0)$ and therefore, in particular, require the targets to have exponential moments, i.e., light tails. Heavy-tailed target densities implied by our assumption $q\in (0,1)$ become relevant, e.g., in Bayesian inverse problems with heavy-tailed noise or prior.   
As the following result demonstrates, the Euler discretized Markov chain $\bm{\vartheta}^{(\Delta)}$ under \ref{cond:gradient} inherits the subexponential ergodic behaviour from the original Langevin diffusion $\X$, provided that $U$ does not grow too fast. 
The proof is a straightforward application of the results from \cite{douc04}---which is the discrete-time counterpart to \cite{douc2009}---and is postponed to Appendix \ref{app:conv}. Let $(\mathbf{P}^x)_{x \in \R^d}$ be a family of probability measures such that $\bm{\vartheta}^{(\Delta)}$ is started in $x$ under $\mathbf{\P}^x$.

\begin{proposition}\label{prop:euler_conv}
Let $q \in (0,1)$. 
Suppose that $U$ satisfies \ref{cond:gradient} and, moreover, for some $M_1 > 0$, $\lVert \nabla U(x) \rVert \leq \lVert x \rVert^{\beta}$ for $\beta \leq (1-q)/2$. Then, for any $\Delta > 0$ in case $\beta < (1-q)/2$ or any $\Delta \leq \mathfrak{r}$ in case $\beta = (1-q)/2$, there exists an invariant probability measure $\pi_{(\Delta)}$ for the chain $\bm{\vartheta}^{(\Delta)}$ and there are constants $c = c(q,\Delta) > 0$ and $\tilde{c} = \tilde{c}(q,\Delta)$ such that, for $f_{q}(x) \sim (1+\lVert x \rVert)^{-2q} \exp(\tilde{c} \lVert x \rVert^{1-q})$ and $r_q(n) \sim n^{-2q/(1+q)} \exp(c n^{(1-q)/(1+q)})$, we have for any $x \in \R^d$ and pairs of inverse Young functions $(\Psi_1,\Psi_2) \in \mathcal{I}$
\[\lim_{n \to \infty} \Psi_1(r_q(n)) \big \lVert \mathbf{P}^x(\vartheta^{(\Delta)}_n \in \cdot) - \pi_{(\Delta)} \big\rVert_{\Psi_2 \circ f_q} = 0.\]
\end{proposition}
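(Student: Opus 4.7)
The plan is to apply the discrete-time subgeometric ergodicity machinery of \cite{douc04}, which is the discrete counterpart of \cite{douc2009} used in Proposition~\ref{prop:f_conv}. Two ingredients are required: a subgeometric drift inequality $P_\Delta V \le V - \phi(V) + b\one_C$ for the transition kernel $P_\Delta$ of $\bm{\vartheta}^{(\Delta)}$, with $\phi$ concave and tending to $\infty$, together with petiteness of a set $C$ absorbing the drift and with irreducibility of the chain.

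For the drift condition, the natural Lyapunov function is $V(x) = \exp(\kappa\lVert x\rVert^{1-q})$ with a sufficiently small $\kappa > 0$, mirroring the function $V_q$ from Section~\ref{sec:basics}. Setting $Y = x - \Delta\nabla U(x) + \sqrt{2\Delta}\,\xi$, I would perform a second order expansion of $y \mapsto \lVert y\rVert^{1-q}$ around $x$, giving
\[
\lVert Y\rVert^{1-q} - \lVert x\rVert^{1-q} \approx -(1-q)\Delta\lVert x\rVert^{-(1+q)}\langle x,\nabla U(x)\rangle + (1-q)\sqrt{2\Delta}\lVert x\rVert^{-(1+q)}\langle x,\xi\rangle + R(x,\xi),
\]
where \ref{cond:gradient} bounds the deterministic leading term by $-(1-q)\Delta\mathfrak{r}\lVert x\rVert^{-2q}$ and the growth bound $\lVert\nabla U(x)\rVert \le \lVert x\rVert^\beta$ with $\beta \le (1-q)/2$ ensures that $R$ contributes only $O(\lVert x\rVert^{-2q})$ after taking the Gaussian expectation. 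Exponentiating and integrating against $\xi \sim \cN(0,I_d)$ then yields, for $\lVert x\rVert$ large,
\[
P_\Delta V(x) \le \exp\!\big(-\kappa c\,\Delta\lVert x\rVert^{-2q}\big)\,V(x),
\]
for some $c > 0$, which is of the form $V - \phi(V)$ with $\phi(u) \sim u(\log u)^{-2q/(1-q)}$. The critical case $\beta = (1-q)/2$ is exactly the threshold at which the deterministic contraction $-\Delta\mathfrak{r}\lVert x\rVert^{-2q}$ competes with the discretization remainder $\Delta^2\lVert\nabla U(x)\rVert^2\lVert x\rVert^{-(1+q)}$ and the Gaussian second moment $\Delta d\,\lVert x\rVert^{-(1+q)}$; the constraint $\Delta \le \mathfrak{r}$ guarantees that dissipation prevails. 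Local boundedness of $\nabla U$ extends the bound to all of $\R^d$ with a residual term supported on $C = \overline{B(0,R_0)}$ for some $R_0$.

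For the petiteness requirement, the one-step transition density
\[
p_\Delta(x,y) = (4\pi\Delta)^{-d/2}\exp\!\big(-\lVert y - x + \Delta\nabla U(x)\rVert^2/(4\Delta)\big)
\]
is strictly positive and jointly continuous. On any compact $C$ and any fixed compact target $K \subset \R^d$, $\inf_{x \in C,\,y \in K} p_\Delta(x,y) > 0$, producing a uniform Doeblin minorization $P_\Delta(x,\cdot) \ge \varepsilon\, \mathrm{Unif}(K)(\cdot)$ for $x \in C$. Hence bounded sets are one-step small (a fortiori petite) and $\bm{\vartheta}^{(\Delta)}$ is Lebesgue irreducible and aperiodic.

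With both ingredients secured, the existence of a unique invariant probability measure $\pi_{(\Delta)}$ and the quantitative convergence in the stated form follow from the main ergodic theorems of \cite{douc04}: inversion of $\phi$ yields the rate $r_q(n) \sim n^{-2q/(1+q)}\exp(c\,n^{(1-q)/(1+q)})$, the modulated moment bounds produce the pairing function $f_q \sim (1+\lVert\cdot\rVert)^{-2q}\exp(\tilde c\lVert\cdot\rVert^{1-q})$, and the general Young-function trade-off furnishes the $(\Psi_1,\Psi_2)$ statement for any pair in $\cI$. The main obstacle is the drift computation: the cancellation between deterministic contraction from \ref{cond:gradient} and the combined quadratic/Gaussian remainders must be tracked to the exact polynomial order $\lVert x\rVert^{-2q}$, which is precisely where the role of $\beta \le (1-q)/2$, and of the step-size constraint in the critical case, becomes visible.
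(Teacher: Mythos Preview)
Your plan is correct and leads to the same conclusion as the paper, but the route you take to the drift condition differs from the paper's. You propose to compute $P_\Delta V$ directly for $V(x)=\exp(\kappa\lVert x\rVert^{1-q})$ by Taylor expanding $\lVert Y\rVert^{1-q}$ around $x$ and then averaging over the Gaussian noise. The paper instead exploits that the Euler chain is a \emph{random-walk-type} chain $\vartheta_{n+1}=\Phi(\vartheta_n)+\sqrt{2\Delta}\xi_{n+1}$ with $\Phi(x)=x-\Delta\nabla U(x)$: it shows the deterministic contraction $\lVert\Phi(x)\rVert\le\lVert x\rVert(1-\tfrac{\mathfrak r\Delta}{2}\lVert x\rVert^{-(1+q)})$ (Assumption~3.4 of \cite{douc04}), observes that the Gaussian noise has arbitrary exponential moments (Assumption~3.3), and then invokes Theorem~3.3 of \cite{douc04}, which directly produces the drift condition $\mathbf D(\phi,V,C)$ with the same $V$ and $\phi(v)=cv(1+\log v)^{-2q/(1-q)}$. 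This separation buys a cleaner argument: the delicate step in your direct approach---controlling the Taylor remainder uniformly when $\xi$ is unbounded---is entirely absorbed into the cited theorem. A small imprecision in your write-up: the Gaussian Hessian term is of order $\lVert x\rVert^{-(1+q)}=o(\lVert x\rVert^{-2q})$ for $q\in(0,1)$ and hence never competes with the contraction; only the discretization remainder $\Delta^2\lVert\nabla U(x)\rVert^2\lesssim\Delta^2\lVert x\rVert^{2\beta}$ does, which is exactly why $\beta=(1-q)/2$ is critical and forces $\Delta\le\mathfrak r$.
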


This convergence behaviour is in line and in fact states more precisely the findings from \cite[Section 3]{roberts96} for the ULA in $d=1$ for the model class of sub-Weibull distributions. It is vital to note that $\pi$ and $\pi_{(\Delta)}$ do not coincide, so even if the ULA converges at subgeometric rate for fixed step size $\Delta$, we need to choose $\Delta$ appropriately small to obtain useful approximations. We make this tuning parameter choice precise in the following.

Typical potentials satisfying \ref{cond:gradient}---such as $U(x) \propto \lVert x \rVert^{1-q}$ outside some ball centered around $0$---are not convex, and their gradient may converge at infinity. 
In fact, if we have $\lim_{\lVert x \rVert \to \infty} \lVert \nabla U(x) \rVert = 0$, then \cite[Theorem 2.4]{roberts96} implies that the Langevin diffusion $\X$ is not exponentially ergodic.
Hence, we cannot expect $\pi$ to have exponentially decaying tails.  Therefore, in contrast to the usually encountered potentials exhibiting some degree of convexity, it is quite natural for our purposes to assume that $\nabla U$ is bounded under \ref{cond:gradient} for $q \in (0,1)$. This makes it easy to prove the following result quantifying convergence of ULA to the target $\pi$ and the performance of the ULA Monte Carlo estimator with burn-in $m$, 
\[\mathbb{H}^{\bm{\vartheta}^{(\Delta)}}_{m,n,\Delta}(f) \coloneqq \frac{1}{n} \sum_{k=m+1}^{m+n} f\big(\vartheta^{(\Delta)}_{k\Delta}\big),\]
based on our results from Section \ref{sec:discrete} and the Girsanov argument underlying the total variation convergence result from \cite{dala17} for strongly convex potentials. 
Denote by $\PP_{\X}^{x,n\Delta}$ and $\PP^{x,n\Delta}_{\mathbf{Z}^{(\Delta)}}$ the laws of $(X_t)_{t \in [0,\Delta n]}$ and $(Z^{(\Delta)}_t)_{t \in [0,n\Delta]}$, respectively, under $\PP^x$.

\begin{proposition} 
Suppose that $U \in \mathcal{C}^1(\R^d)$ has an $L$-Lipschitz continuous and bounded gradient that satisfies \ref{cond:gradient} for some $q \in (0,1)$.
\begin{enumerate}[label = (\roman*), ref =(\roman*), leftmargin = *]
\item For any $\Delta \in (0,1]$ and initial distribution $\nu$ such that $V_q \in L^1(\nu)$, it holds for any $n \in \N$, 
\begin{equation}
\begin{split}\label{eq:tv_ula}
\big\lVert \mathbf{P}^\nu\big(\vartheta^{(\Delta)}_n \in \cdot\big) - \pi \big\rVert_{\mathrm{TV}}&\leq \mathfrak{C} \nu(V_q) \exp\big(-(\iota^{\prime\prime} n\Delta)^{(1-q)/(1+q)} \big)\\
&\hspace*{5em}+ \sqrt{\frac{(1 + \lVert \lVert \nabla U(\cdot) \rVert^2_\infty \rVert_{\infty}) dL^2n\Delta^2}{2}},
\end{split}\end{equation}
for some constant $\mathfrak{C} > 0$ and $\iota^{\prime \prime} \in (0,\iota^{(1+q)/(1-q)}(1+q)( \mathfrak{r} - \iota(1-q)))$ for some $\iota < \mathfrak{r}/(1-q)$.
\item Let $\eta_1,\eta_2,\eta_3 \geq 0$,  $f \in \mathcal{G}(\eta_1, \mathfrak{L}) \cap \mathcal{W}^{2,p}_{\mathrm{loc}}(\R^d)$, $p \geq d$, with $\nabla f \in L_{\mathrm{loc}}^{2d}(\R^d)$ such that $\lVert \nabla f(x) \rVert \lesssim 1 + \lVert x \rVert^{\eta_2}$ and for all $i,j=1,\ldots,d$, $\lvert \uppartial_{x_i,x_j} f(x) \rvert \lesssim 1 + \lVert x \rVert^{\eta_3}$. Let also $C,\iota^{\prime\prime},\alpha,\tilde{\gamma}, \tilde\varsigma = \tilde\varsigma(\eta_1,q,0), \varrho = \varrho(\alpha,\eta_2,\tilde{\gamma},q)$ be the constants from Corollary \ref{coro:pac_discrete}, adapted to the specific parameters of the Langevin diffusion. 
Then, for $\Delta$ satisfying both $\Delta < \min\{1, \varepsilon/(3\mathrm{e}\mathfrak{D}), (\log(1/\delta))^{\tilde\varsigma-\varrho}\}$ and
\begin{align*}
&\Delta \leq  \frac{(\delta \varepsilon)^2}{2(1 + \lVert \lVert \nabla U(\cdot) \rVert_\infty \rVert_{\infty}) dL^2 \big((\log(4/\delta))^{2\tilde\varsigma} + \varepsilon^2\big(2 + (\log(4C/\delta))^{(1+q)/(1-q)}\slash \iota^{\prime\prime} \big)\big)},
\end{align*}
sample size 
\[n = n(\Delta,\varepsilon,\delta) = \big\lceil \Psi(\Delta,\varepsilon,\delta/4) \big\rceil\] 
and burn-in 
\[m = m(\Delta,\varepsilon,\delta) = \big\lceil \Delta^{-1} (\log(4C/\delta))^{(1+q)/(1-q)}\slash\iota^{\prime\prime}\big\rceil,\]
it holds for any initial distribution $\nu$ such that $V_q \in L^1(\nu)$ that 
\[\mathbf{P}^\nu\big(\big\lvert \mathbb{H}^{\bm{\vartheta}^{(\Delta)}}_{m,n,\Delta}(f) - \pi(f)\big\rvert \leq \varepsilon \big) \geq 1- \delta.\] \label{theo:ula2}
\end{enumerate}
\end{proposition}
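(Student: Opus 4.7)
The overarching plan is to couple the ULA chain to the true Langevin diffusion via a Girsanov--Pinsker argument on path space, reducing both assertions to a triangle inequality combined with results already established: the subexponential TV convergence \eqref{eq:subexp_tv} of the Langevin diffusion and the discrete PAC bound of Corollary \ref{coro:pac_discrete} applied to its skeleton sampled at stepsize $\Delta$.

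For (i), since $\vartheta^{(\Delta)}_n$ and $Z^{(\Delta)}_{n\Delta}$ have the same law, the triangle inequality gives
\[\lVert \mathbf{P}^\nu(\vartheta^{(\Delta)}_n \in \cdot) - \pi\rVert_{\mathrm{TV}} \leq \lVert \PP^\nu(Z^{(\Delta)}_{n\Delta}\in\cdot) - \PP^\nu(X_{n\Delta}\in\cdot)\rVert_{\mathrm{TV}} + \lVert \PP^\nu(X_{n\Delta} \in \cdot) - \pi\rVert_{\mathrm{TV}}.\]
Integrating \eqref{eq:subexp_tv} against $\nu$, choosing $\iota < \mathfrak{r}/(1-q)$ to ensure positivity of $\iota^{\prime}$, and absorbing the polynomial prefactor $(1+t)^{2q/(1+q)}$ into a slightly smaller exponent $\iota^{\prime\prime}$, bounds the second summand by $\mathfrak{C}\nu(V_q)\exp(-(\iota^{\prime\prime}n\Delta)^{(1-q)/(1+q)})$. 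For the first summand, monotonicity of TV under the terminal-time projection bounds it by the TV distance between the full path laws on $[0,n\Delta]$. These laws share the diffusion coefficient $\sqrt{2}$ and differ only through their non-anticipative drifts, so Girsanov (Novikov holds by boundedness of $\nabla U$) combined with Pinsker yields
\[\lVert \PP^\nu_{\X,[0,n\Delta]} - \PP^\nu_{\mathbf{Z}^{(\Delta)},[0,n\Delta]}\rVert_{\mathrm{TV}}^2 \leq \tfrac{1}{8} \mathbf{E}^\nu_{\X}\textstyle\int_0^{n\Delta}\lVert \nabla U(X_t) - \nabla U(X_{\lfloor t/\Delta\rfloor \Delta})\rVert^2\diff{t}.\]
Lipschitz continuity of $\nabla U$ together with the block-wise bound $\mathbf{E}\lVert X_t - X_{k\Delta}\rVert^2 \leq 2(t-k\Delta)^2\lVert\lVert\nabla U\rVert^2\rVert_\infty + 4d(t-k\Delta)$, derived from $X_t - X_{k\Delta} = -\int_{k\Delta}^t \nabla U(X_s)\diff{s} + \sqrt{2}(W_t - W_{k\Delta})$, then delivers the second summand of \eqref{eq:tv_ula}.

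For (ii), the event $B = \{\lvert \mathbb{H}_{m,n,\Delta}(f) - \pi(f)\rvert > \varepsilon\}$ is a measurable functional of the discrete path $(X_{k\Delta})_{k=0,\ldots,m+n}$ on $\mathcal{C}([0,(m+n)\Delta],\R^d)$, so the same TV coupling applied on the extended horizon $[0,(m+n)\Delta]$ gives
\[\mathbf{P}^\nu(\lvert \mathbb{H}^{\bm{\vartheta}^{(\Delta)}}_{m,n,\Delta}(f) - \pi(f)\rvert > \varepsilon) \leq \PP^\nu(\X \in B) + \lVert \PP^\nu_{\X,[0,(m+n)\Delta]} - \PP^\nu_{\mathbf{Z}^{(\Delta)},[0,(m+n)\Delta]}\rVert_{\mathrm{TV}}.\]
The first term is bounded by $\delta/2$ via Corollary \ref{coro:pac_discrete} at confidence level $\delta/2$; the prescribed thresholds $n \geq \Psi(\Delta,\varepsilon,\delta/4)$ and $m \geq \Delta^{-1}(\log(4C/\delta))^{(1+q_+)/(1-q_+)}/\iota^{\prime\prime}$ are exactly the inputs to that corollary after the $\delta\mapsto \delta/2$ substitution. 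The second term is bounded by the Pinsker--Girsanov estimate of part (i) on the horizon $(m+n)\Delta$; asking this to be at most $\delta/2$ reduces to the quadratic requirement $(m+n)\Delta^2(1+\lVert\lVert\nabla U\rVert^2\rVert_\infty)dL^2 \leq \delta^2/2$. Substituting $m\Delta \asymp (\log(4C/\delta))^{(1+q)/(1-q)}/\iota^{\prime\prime}$ and $n\Delta \lesssim (\log(4/\delta))^{2\tilde{\varsigma}}/\varepsilon^2$ (using the auxiliary constraint $\Delta \leq (\log(1/\delta))^{\tilde{\varsigma}-\varrho}$ to ensure the maximum in $\Psi(\Delta,\varepsilon,\delta/4)$ is attained on the $\tilde{\varsigma}$-branch) and multiplying through by $\Delta$ reproduces the explicit ceiling on $\Delta$ in the statement.

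The main technical chore is the bookkeeping of constants in the Pinsker--Girsanov step on $[0,(m+n)\Delta]$, so that the composite $\Delta$-condition in part (ii) collects exactly into the displayed fraction $(\delta\varepsilon)^2/(\ldots)$. Once the KL estimate has been executed with sharp block-wise moment control, every remaining step is either a direct triangle inequality or an invocation of a previously established result.
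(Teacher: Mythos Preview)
Your proposal is correct and follows essentially the same route as the paper: a Girsanov--Pinsker path-space bound to control the TV distance between the interpolated Euler process and the Langevin diffusion, combined with the subexponential ergodicity \eqref{eq:subexp_tv} for part (i) and with Corollary~\ref{coro:pac_discrete} for part (ii) via the triangle inequality. The only cosmetic difference is that you take the KL divergence in the direction $\mathrm{KL}(\PP_{\X}\Vert\PP_{\mathbf{Z}^{(\Delta)}})$ and bound $\E_\X\lVert X_t-X_{k\Delta}\rVert^2$, whereas the paper (following Dalalyan) effectively works with the reverse direction so that the increment $Z^{(\Delta)}_t-Z^{(\Delta)}_{k\Delta}$ has exactly constant drift on each block; since $\nabla U$ is assumed bounded, both directions yield the same final TV estimate and the distinction is immaterial.
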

\begin{proof}
As in the proof of Lemma 2 in \cite{dala17}, see also \cite{dala12}, using $L$-Lipschitz continuity of $\nabla U$ and Girsanov's theorem, it follows that the Kullback--Leibler divergence of $\PP_{\X}^{x,n\Delta}$ wrt $\PP^{x,n\Delta}_{\mathbf{Z}^{(\Delta)}}$
fulfills
\begin{equation*}\label{eq:kl}
\mathrm{KL}\big(\PP^{x,n\Delta}_{\X} \big\Vert \PP^{x,n\Delta}_{\mathbf{Z}^{(\Delta)}} \big) \leq \frac{L^2 \Delta^3}{12} \sum_{k=0}^{n-1} \E^x\big[\big \lVert \nabla U(Z^{(\Delta)}_{k \Delta})\big\rVert^2 \big] + \frac{dL^2n\Delta^2}{4}.
\end{equation*}
Thus, using $\lVert \lVert \nabla U (\cdot) \rVert\rVert_\infty \leq \sqrt{d} \lVert \lVert \nabla U (\cdot)\rVert_\infty \rVert_\infty $ and Pinsker's inequality, it follows that 
\begin{equation}\label{eq:tv_pinsker}
\big\lVert \PP^x(X_{n \Delta} \in \cdot) - \mathbf{P}^x\big(\vartheta^{(\Delta)}_n \in \cdot \big) \big\rVert_{\mathrm{TV}} \leq \big\lVert \PP^{x,n\Delta}_{\X} -\PP^{x,n\Delta}_{\mathbf{Z}^{(\Delta)}} \big\rVert_{\mathrm{TV}} \leq \sqrt{\frac{(1 + \lVert \lVert \nabla U(\cdot) \rVert^2_\infty\rVert_{\infty}) dL^2n\Delta^2}{2}}.
\end{equation}
By triangle inequality, subexponential convergence in \eqref{eq:subexp_tv} with the parameters adapted to the Langevin diffusion and \eqref{eq:tv_pinsker}, we immediately obtain \eqref{eq:tv_ula}. Moreover, for $\Delta$ given as in part \ref{theo:ula2}, the choice $n = n(\Delta,\varepsilon,\delta)$ and $ m= m(\Delta,\varepsilon,\delta)$, if we define 
\[g_{\varepsilon}((x_t)_{t \in [0,(n+m)\Delta]}) = \one_{(\varepsilon, \infty)}\Big(\Big\vert \frac{1}{n} \sum_{k=m+1}^{n+m}  (f(x_{k\Delta}) - \pi(f))\Big\vert\Big), \quad (x_t)_{t \in [0,(n+m)\Delta]} \in \mathcal{C}([0,(n+m)\Delta], \R^d),\]
it follows from \eqref{eq:tv_pinsker}  that 
\begin{align*}
&\lvert \PP^\nu(\lvert \mathbb{H}_{m,n,\Delta}(f) - \pi(f) \rvert > \varepsilon) - \mathbf{P}^\nu(\lvert \mathbb{H}^{\bm{\vartheta}^{(\Delta)}}_{n,m,\Delta}(f) - \pi(f) \rvert > \varepsilon)\rvert\\
&\quad= \big\lvert \E^\nu\big[g_{\varepsilon}\big((X_t)_{t \in [0,(n+m)\Delta]}\big) \big] - \E^\nu\big[g_{\varepsilon}\big((Z^{(\Delta)}_t)_{t \in [0,(n+m)\Delta]}\big) \big] \big\rvert\\
&\quad\leq \int_{\R^d} \big\lVert \PP^{x,(n+m)\Delta}_{\X} -\PP^{x,(n+m)\Delta}_{\mathbf{Z}^{(\Delta)}} \big\rVert_{\mathrm{TV}} \, \nu(\diff{x})\\
&\quad\leq \sqrt{\frac{(1 + \lVert \lVert \nabla U(\cdot) \rVert^2_\infty \rVert_{\infty}) dL^2(n+m)\Delta^2}{2}}\\
&\quad\leq \bigg(\frac{(1 + \lVert \lVert \nabla U(\cdot) \rVert^2_\infty \rVert_{\infty}) dL^2 \Delta}{2} \Big(2+ \frac{(\log(4/\delta))^{2\tilde\varsigma}}{\varepsilon^2} + (\log(4C/\delta))^{(1+q)/(1-q)}\slash \iota^{\prime\prime} \Big)\bigg)^{1/2}\\
&\quad\leq \delta/2.
\end{align*}
Statement \ref{theo:ula2} now follows from Corollary \ref{coro:pac_discrete} and triangle inequality.
\end{proof}

The above result gives lower bounds on the required step length, sample size and burn-in for  an $\varepsilon$-precise integral approximation of $\pi(f)$ with probability at least $1-\delta$ for polynomially bounded $f$ with polynomially bounded weak derivative and Hessian. These are summarized in Table \ref{tab:mcmc}. An obvious application of this result are explicit finite sample guarantees for MCMC moment approximations of the target $\pi$.
\begin{table}[h]
\centering
\begin{tabular}{l | c | c | c}
& step length $\Delta$ & sample size $n$ & burn-in $m$ \\\hline
$\varepsilon$-prec.\ sampling & $\phantom{\bigg(}\frac{\varepsilon^2}{d(\log(\mathfrak{C}/\varepsilon))^{(1-q)/(1+q)})} $ & $\frac{d(\log(\mathfrak{C}/\varepsilon))^{2(1-q)/(1+q)}}{\varepsilon^2}$ & $-$\\ \hline 
$(\varepsilon,\delta)$-PAC bound & $\phantom{\bigg(}\frac{(\delta \varepsilon)^2}{d( \log(1/\delta))^{2(\eta_1 + (q+3)/2)/(1-q)}}$ & $\frac{d\mathfrak{D}^2 (\log(1/\delta))^{(4(\eta_1 + (q+3)/2))/(1-q)}}{\delta^2 \varepsilon^4}$ & $\frac{d (\log(1/\delta))^{2(\eta_1 + q + 2)/(1-q)}}{(\delta \varepsilon)^2}$
\end{tabular}
\caption{Order of sufficient sampling frequency $\Delta$, sample size $n$ and burn-in $m$ for $(\varepsilon,\delta)$-PAC bounds and sampling within $\varepsilon$-TV margin}
\label{tab:mcmc}
\end{table}

\begin{remark}
It should be noted that the exact dimensional dependence of $\mathfrak{D}$ is not clear, which, similarly to the previous section, is an effect of unspecified constants in the ergodicity and Sobolev bounds used for the derivation of the concentration inequalities. Overcoming this issue is highly non-trivial and subject of ongoing research efforts. In contrast, the convex, respectively strongly convex, settings in \cite{durmus17,dala17} give rise to Poincaré, respectively log-Sobolev, inequalities with explicit constants such that the investigated required number of iterations for sampling within an $\varepsilon$-margin in total variation can be made explicit in terms of the dimension in these papers. According to the above, the simulation grid should be significantly finer and the sample size significantly larger to obtain exact PAC-guarantees compared to the case when one would just be interested in sampling with $\varepsilon$-precision in total variation. Here, the dependence on the level $\varepsilon$ is a natural correspondence to the sample sizes (and hence necessary number of gradient evaluations) found in  \cite{dala17,durmus17}. 
\end{remark}

Our results yield explicit and useful guarantees for a sampling scenario that is quite different from what is usually encountered in the theoretical MCMC literature. Still, we expect that the dependence of $(\Delta,n,m)$ on $\delta$ for the PAC bounds can be improved in the sense that the $\delta^2$-dependency is likely too strict.  Its occurrence is explained by our strategy to control the total variation distance between the law of the Langevin diffusion $\X$ and its numerical approximation $\mathbf{Z}^{(\Delta)}$ in terms of their KL-divergence using Pinsker's inequality. 
This leads to a suboptimal bound on the total variation distance, causing the additional dependence on $\delta^2$. We are not aware of any other approaches in the MCMC literature to control this loss on the path level. This issue can be possibly circumvented by deriving concentration inequalities for $\mathbb{H}_{m,n,\Delta}^{\bm{\vartheta}^{(\Delta)}}(f)$ around its mean directly and lift these to concentration inequalities of $\mathbb{H}_{m,n,\Delta}^{\bm{\vartheta}^{(\Delta)}}(f)$ around the target $\pi(f)$ by establishing appropriate bias estimates. This is the strategy pursued in \cite[Proposition 18]{durmus15}---an earlier preprint version of \cite{durmus17}---where the authors 
infer a sufficient sample size $n \sim d\log(1/\delta)/\varepsilon^4$ and sampling frequency  $\Delta \sim \varepsilon^2/d$ for the ULA MC estimator of the integral $\pi(f)$ for a strictly log-concave density (in particular, $q = -1$) and bounded $f$. Since we focus on applications that can be treated with our theoretical results from Section \ref{sec:discrete}, we do not push further the issue of improving our bounds in the setting of a heavy-tailed target $\pi$ and unbounded integrands $f$. Instead, we leave it open as an interesting question for future research.

\begin{appendix} 
\section{Remaining proofs}\label{app:conv}
\begin{proof}[Proof of Proposition \ref{prop:f_conv}]
By \cite[Proposition 1]{douc2009}, every compact set is petite and any skeleton chain is irreducible. 
Moreover, if we let $V \in \mathcal{C}^2(\R^d)$ such that $V = \lVert x \rVert^\gamma$ for $\lVert x \rVert \geq M_0$ and $V \geq 1$,  and we can show that $LV$ is locally bounded and
\begin{equation}\label{eq:gen_drift}
L V(x) \lesssim -\phi \circ V(x) (1+ o(1)), \quad \lVert x \rVert \geq M_0,
\end{equation}
for $\phi(x) = \mathfrak{r}\gamma x^{(\gamma-(1+q))/\gamma}$ which is increasing, differentiable and concave on $(0,\infty)$, it will follow from \cite[Theorem 3.4]{douc2009} that, for any $\varepsilon \in(0,1)$, the condition $\mathbf{D}(C_{\varepsilon},V,\phi_\varepsilon,a_{\varepsilon})$ is satisfied for $\phi_\varepsilon = (1-\varepsilon)\phi$, $C_\varepsilon = \overbar{B(0,M_\varepsilon)}$ for $M_\varepsilon \geq M_0$ large enough and $a_\varepsilon = \sup_{\lVert x \rVert \leq M_\varepsilon} \lvert LV (x) + \phi_{\varepsilon} \circ V(x)\rvert$. 
This then implies the result using Theorem 3.2 and Proposition 4.6 from \cite{douc2009}. 
(Note that in the notation of \cite{douc2009}, $f^\ast = \phi_\varepsilon \circ V \sim f_{\gamma,q}$, $H_{\phi_\varepsilon}^{-1}(t) = (1+ (1+q)(1-\varepsilon) t/\gamma)^{\gamma/(1+q)}$ for $q \in (-1,1)$ and $H_{\phi_\varepsilon}^{-1}(t) = \exp(-\mathfrak{r}\gamma(1-\varepsilon)t)$ for $q = -1$, hence $r_\ast(t) = \phi_\varepsilon \circ H^{-1}_{\phi_\varepsilon}(t) \sim r_{\gamma,q}(t)$.) 
Since $b,\sigma$ are locally bounded and $L$ is a local operator, it is immediate that $LV$ is locally bounded as well. 
Further, for $\lVert x \rVert \geq M_0$, \ref{cond:drift} implies
\begin{align*} 
\langle b(x), \nabla V(x) \rangle &= \gamma \lVert x \rVert^{\gamma-1} \langle b(x), x \slash \lVert x \rVert \rangle \leq -r\gamma \lVert x \rVert^{\gamma-1-q} = -\phi \circ V(x),
\end{align*}
and the assumptions on the diffusion matrix yield
\begin{align*} 
\lvert \mathrm{tr}\big(a(x) D^2 V(x) \big)\lvert &= \Big\lvert \sum_{i,j = 1}^d a_{i,j}(x) \big(\one_{\{i=j\}}\gamma \lVert x \rVert^{\gamma - 2} +  \gamma(\gamma-2) x_i x_j \lVert x \rVert^{\gamma - 4}\big)\Big\rvert\\
&\leq (\Lambda \gamma d +\gamma\lvert \gamma-2 \rvert \lambda_+)  \lVert x \rVert^{\gamma-2} = o(\phi \circ V(x)).
\end{align*}
This gives \eqref{eq:gen_drift} and therefore the result.
\end{proof}

\begin{proof}[Proof of Proposition \ref{prop:euler_conv}]
Let $P^{(\Delta)}(x,B) = \mathbf{P}^x(\vartheta^{(\Delta)}_n \in B)$, $(x,B) \in \R^d \times \mathcal{B}(\R^d)$, be the transition kernel of the Markov chain $\bm{\vartheta}^{(\Delta)}$. 
Since $P^{(\Delta)}(x,\cdot) = \mathcal{N}(x-h \nabla U(x), 2h \mathbb{I}_d)$, it follows from classical Meyn--Tweedie arguments (cf.\ \cite[Theorem 3.1]{hansen03} for the precise statement) that $P^{(\Delta)}$ is an aperiodic and $\lebesgue$-irreducible Markov kernel and that all compact sets are small and hence petite. 
Let $\Phi(x) = x - \Delta \nabla U(x)$ such that we may write $\vartheta_{n+1}^{(\Delta)} = \Phi(\vartheta^{(\Delta)}_n) + \sqrt{2}\Delta \xi_{n+1}$. By our assumptions on the gradient $\nabla U$, we can choose $M \geq M_0 \vee M_1 \vee 1$ large enough such that, for $\lVert x \rVert \geq M$, we have 
\begin{align*}
\lVert \Phi(x) \rVert^2 &= \lVert x \rVert^2 - 2\Delta \langle x, \nabla U(x) \rangle + \Delta^2 \lVert \nabla U(x) \rVert^2\\
&\leq \lVert x \rVert^2 - 2\Delta \mathfrak{r}\lVert x \rVert^{1-q} + \Delta^2 \lVert x \rVert^{2\beta}\\
&\leq \lVert x \rVert^2\big(1- \Delta \mathfrak{r}\lVert x \rVert^{-(1+q)}\big)\\
&\leq \big(\lVert x \rVert \big(1- \tfrac{\mathfrak{r}\Delta}{2} \lVert x \rVert^{-(1+q)}\big)\big)^2.
\end{align*}
Hence, Assumption 3.4 from \cite{douc04} is fulfilled with $R_0 = M, \rho = 1+q, r = \mathfrak{r}\Delta/2$. Moreover, since the noise $(\xi_n)_{n \in \N}$ is i.i.d.\ Gaussian, Assumption 3.3 from \cite{douc04} is satisfied for any $z_0 > 0$ and $\gamma_0=1$. It thus follows from \cite[Theorem 3.3]{douc04} that their central drift condition $\mathbf{D}(\phi,V,C)$ holds for $\phi(v) = cv(1+ \log v)^{-2q/(1-q)}$, $V(x) = \mathrm{e}^{z\lVert x \rVert^{1-q}}$ and the compact set $C = \overbar{B(0, \tilde{M})}$, for some $c,z > 0$ and $\tilde{M} \geq M$. Consequently, for $H_\phi(t) = \int_1^t 1/\phi(v)\diff{v}$, we have $r_q \sim \phi \circ H_{\phi}^{-1}$ and $f_q \sim \phi \circ V$ for appropriate choices of the constants $c(q,\Delta), \tilde{c}(q,\Delta)$. Since $P^{(\Delta)}$ is irreducible and aperiodic and $C$ is petite, we can now apply \cite[Theorem 2.8]{douc04} to prove the claim.
\end{proof}

\end{appendix}

\paragraph{Acknowledgements}
CS and LT gratefully acknowledge financial support of Carlsberg Foundation Young Researcher Fellowship grant CF20-0640 ``Exploring the potential of nonparametric modelling of complex systems via SPDEs''.

\printbibliography
\end{document}